\documentclass[12pt]{amsart}
\usepackage{preamble}

\begin{document}
\title{Categories of Games and their Fra\"iss\'e Theory}
\author[M. Duzi]{Matheus Duzi$^1$}
\address{Instituto de Ci\^encias Matem\'aticas e de Computa\c c\~ao, Universidade de S\~ao Paulo\\
	Avenida Trabalhador s\~ao-carlense, 400,  S\~ao Carlos, SP, 13566-590, Brazil}
\email{matheus.duzi.costa@usp.br}
\thanks{$^1$Supported by FAPESP (2019/16357-1 and 2021/13427-9)}
\author[P. Szeptycki]{Paul Szeptycki}
\address{Department of Mathematics and Statistics, Faculty of Science, York University\\
	Toronto, Ontario, Canada M3J 1P3}
\email{szeptyck@yorku.ca}
\author[W. Tholen]{Walter Tholen}
\address{Department of Mathematics and Statistics, Faculty of Science, York University\\
	Toronto, Ontario, Canada M3J 1P3}
\email{tholen@yorku.ca}
		\subjclass[2020]{Primary 91A44, 18A05;
					Secondary 18A35, 37B05}
\begin{abstract}
	Relying on recent generalizations of the Fra\"iss\'e theory to a broader category-theoretic context, we study the class of abstract finite games played between two players and show the existence of an infinitetly countable game which is ultrahomogeneous and universal with respect to said class. Certain peculiarities of our game categories which clash with the usual framework found in the literature then lead us to formulate weaker category-theoretic properties which still yield a universal and ultrahomogeneous Fra\"iss\'e limit, thus further generalizing the categorical framework for a Fra\"iss\'e theory.  

    \smallskip\smallskip
    \noindent \textit{Keywords:} infinite games, finite games, Fra\"{i}ss\'{e} theory, category theory, model theory, topological dynamics, Ramsey theory.
\end{abstract}

\maketitle	
\pagenumbering{arabic}

\section{Introduction}
The study of the combinatorics of Fra\"iss\'e classes, started in \cite{Fraisse1954}, has received a 
great deal of attention, especially from the perspective of the Pestov-Kechris-Todorcevic correspondence established in \cite{Kechris2005}. Our goal in this work is to study Fra\"iss\'e classes related to some recently introduced categories of infinite games. 

Fra\"isse theory is classically set in a model-theoretic context. However, in \cite{Kubis2014}, Fra\"iss\'e classes have been studied in a broader categorical framework. In light of the categorical theory of mathematical games introduced and studied in \cite{Duzi2024}, we consider here approaches to developing the Fra\"iss\'e theory of games from both a model-theoretic and categorical perspective, with an emphasis on the later (for reasons made apparent below).

In Section \ref{SEC_ModelTheoretical} we recall the basic concepts of Fra\"ss\'e theory in the classical model-theoretic context, introduce the finite objects that we wish to consider (namely, the proper finite games) and describe an example that shows that the model-theoretic approach cannot be applied to our class of finite games.

Section \ref{SEC_Categorical} applies the categorical framework developed in \cite{Kubis2014} and presents a Fra\"iss\'e sequence of finite games and its (direct limit, our candidate for the) Fra\"iss\'e limit, $G_\FL$.

In Section \ref{SEC_Ultrahom} we present a modification of the model-theoretic framework that addresses the shortcomings of the latter approach, culminating in:

\begin{thm*}[Theorem \ref{THM_GamesFL}]
    $G_{\FL}$ is the unique (up to isomorphism) game which:
         \begin{itemize}
             \item is \emph{countable} (in the sense that its game tree and payoff set are countable),
             \item contains a copy of every finite game, 
             \item is ultrahomogeneous w.r.t. $(\emb,\FinGame)$.
         \end{itemize}
\end{thm*}

We return in Section \ref{SEC_WeaklyFiniteSmall} to the categorical framework and introduce the notion of \emph{weak finite smallness}, highlighting its key role in ultrahomogeneity of the Fra\"iss\'e limit in the classical context, characterizing which games satisfy such property and thus showing that, despite our Fra\"iss\'e limit of finite games being ultrahomogeneous (as seen in the previous section), not every finite game is weakly finitely small. 

We single out in Section \ref{SEC_Directedness} two categorical properties weakening weak finite smallness for all small objects that, together, guarantee ultrahomogeneity of the Fra\"iss\'e limit (thus, providing a generalization to the categorical Fra\"iss\'e theory).

At last, relying on the characterization shown in Section \ref{SEC_WeaklyFiniteSmall} and on a KPT-correspondence, we show that, provided with a certain topology, the automorphism group of our Fra\"iss\'e limit of finite games is not extremely amenable and that the class of finite games which are trivial for one of the two players does not satisfy a Ramsey-type combinatorial property.

Our terminology and notation are quite standard. We denote the empty sequence by $\seq{\,}$. Given a finite sequence $t$ with domain $n\in \omega$ (the empty sequence included, with $n=0$), we denote by $|t|=n$ the \textit{length} of the sequence $t$, and, for each $k\le |t|$, $t\upharpoonright k$ be its initial segment with length $k$ (the \textit{truncation} of $t$ up to its $k$-th element). 

Given two finite sequences $t=\seq{x_i: i\le n}$ and $s=\seq{y_i:i\le m}$, we denote by $t^\smallfrown s$ the \textit{concatenation} of $t$ with $s$, that is, 
\[
t^\smallfrown s = \seq{x_0, \dotsc, x_n, y_0, \dotsc, y_m}.
\]
Furthermore, we will write $t^\smallfrown x$ instead of  $t^\smallfrown \seq{x}$ and we let $\seq{x: i< n}$ denote the $x$ constant sequence of length $n$. 

For two distinct infinite sequences $R,R'$, let
\[
\Delta(R,R')=\min\set{n<\omega: R(n)\neq R'(n)}.
\]

Our terminology and notation from category theory are also standard and can be found in, e.g., \cite{MacLane1978} For example, given a category $\mathbf C$ with objects $x,y$, we denote by $\mathbf{C}(x,y)$ the class of all morphisms from $x$ to $y$. And without any risk of ambiguity in our setting, we write $\mathbf C\subseteq \mathbf D$ when $\mathbf C$ is a full subcategory of $\mathbf D$.

For a functor $\mathrm{F}\colon \mathbf{C}\to \mathbf{D}$, we denote by $\colim\mathrm{F}$ its colimit object (unique up to isomorphism) in $\mathbf{D}$, if it exists. For a family of objects $\seq{A_i:i\in I}_{i\in I}$ in a category $\mathbf{C}$, we denote its coproduct by $\coprod_{i\in I} A_i$ (if it exists). For the sake of clarity, we denote the disjoint union of a family of sets $\set{X_i:i\in I}$ as $\bigsqcup_{i\in I}X_i$.

We will often regard the ordinal $\omega$ as a category: its objects are the natural numbers and there is a (unique) morphism $ n\to m$ if, and only if, $n\le m$.

Let us now recall some game-theoretic notions from  \cite{Duzi2024} which we will be using later:

\begin{defn}\label{DEF_Game}
	For a set $M$, a set $T\subseteq M^{<\omega}=\bigcup_{n<\omega}{M^n}$ of finite sequences in $M$ is a \textit{game tree over $M$} if 
	\begin{itemize}
		\item[(\rom{1})] For every $t\in T$ and $k\le|t|$, one has $t\restrict k\in T$;
		\item[(\rom{2})] For every $t\in T$, there is an element $x$ such that $t^\smallfrown x\in T$.
	\end{itemize}	
	We say that $T$ is a \textit{game tree} if $T$ is a game tree over some set $M$ and we write
 \[\M(T) = \set{x: t^\smallfrown x \in T \text{ for some } t\in T}.\]

        A pair $G = (T, A)$ is an \textit{infinite game} if $T$ is a game tree over a set $M$ and $A$ is a subset of 
	\[
	\Run(T) = \set{R\in M^{\omega}: R\restrict n\in T \text{ for every $n<\omega$}}.
	\]
\end{defn}

We write $G\le G'$ for $G=(T,A)$ and $G'=(T',A')$ when $T\subseteq T'$ and $A = A'\cap \Run(T)$.

We refer to the elements of $T$ as {\em moments} of the game, to the elements of $\Run(T)$ as \emph{runs} of the game, to the set $A$ in $G=(T,A)$ as the \emph{payoff set} of the game and we say that a run $R\in\Run(T)$ is \emph{won by $\ali$} if $R\in A$ (otherwise, we say that such run is won by $\bob$). 

\begin{defn}[Chronological map]\label{DEF_fChronological}
	\sloppy For game trees $T_1$ and $T_2$, a mapping $f\colon T_1\to T_2$ is \emph{chronological} if $f$ preserves length and truncation of moments; that is: if for every $t\in T_1$, $|f(t)|=|t|$ and $f(t\restrict k)=f(t)\restrict k$ for all $k\le |t|$.

        We say that $f\colon T_1\to T_2$ is a {\em chronological embedding} if it is an injective chronological map. 
\end{defn}  

As pointed out and extensively used in \cite{Duzi2024}, any chronological map $f\colon T_1\to T_2$ extends to a mapping $\overline{f}\colon \Run(T_1)\to \Run(T_2)$, uniquely determined by  $\overline{f}(R)\restrict n=f(R\upharpoonright n)$ for every $n<\omega$; thus we write
\[
\overline{f}=\colim f_n,
\]
with $f_n: T_1(n)\to T_2(n)$ denoting the indicated restriction of $f$.

\begin{defn}[Game morphisms]\label{DEF_Amorph}
	Let $G_1=(T_1,A_1)$ and $G_2=(T_2,A_2)$ be games and $f\colon T_1 \to T_2$ be chronological. Then 
	\begin{itemize}
		\item[(A)] $f$ is an $\A$-\textit{morphism} if $\overline{f}(R)\in A_2$ for every run $R\in A_1$ (i.e., $\overline{f}(R)$ is won by $\ali$ in $G_2$ whenever $R$ is won by $\ali$ in $G_1$) and
		\item[(B)] $f$ is a $\B$-\textit{morphism} if $\overline{f}(R)\in \Run(T_2)\setminus A_2$ for every run $R\in \Run(T_1)\setminus A_1$ (i.e., $\overline{f}(R)$ is won by $\bob$ in $G_2$ whenever $R$ is won by $\bob$ in $G_1$).
	\end{itemize}	
 We say that $f\colon G_1 \to G_2$ is a {\em game embedding} if it is injective and both an $\A$- and a $\B$-morphism, in which case we may write $f\colon G_1  \embed G_2$.
\end{defn} 

The main categories which will be analyzed throughout this paper are:

\begin{itemize}
	\item $\Gmes$: 
	\begin{itemize}
		\item objects are game trees,
		\item morphisms are chronological mappings.
	\end{itemize}
	\item $\Games_\A$: 
	\begin{itemize}
		\item objects are games,
		\item morphisms are $\A$-morphisms.
	\end{itemize}
	\item $\Games_\B$: 
	\begin{itemize}
		\item objects are games,
		\item morphisms are $\B$-morphisms.
	\end{itemize}  
        \item $\Games_{\emb}$: 
	\begin{itemize}
		\item objects are games,
		\item morphisms are game embeddings.
	\end{itemize} 
\end{itemize} 

All of these were introduced and studied in \cite{Duzi2024} where, for example, it was shown (see Section 9.1.2 in \cite{Duzi2024}) that the coproduct in $\Gmes$ of a family $\seq{T_j:j\in J}$ of game trees may be given by
\begin{gather*}
	\coprod_{j\in J}T_j=\{\seq{\,}\}\cup\left(\bigsqcup_{j\in J}\left(T_{j}\setminus\{\seq{\,}\}\right)\right)
\end{gather*}
and the coproduct in $\Games_\A$ of a family $\seq{(T_j,A_j):j\in J}$ of games by $(\coprod_{j\in J}T_j, \bigsqcup_{j\in J}A_j)$.

\section{Model-theoretic approach} \label{SEC_ModelTheoretical}
Let us first recall the model-theoretic notion of a Fra\"iss\'e class (we follow \cite{Hodges1993} as reference).

Let $L$ be a countable signature and $D$ an $L$-structure (for the sake of existence of initial objects, we allow empty structures as long as $L$ has no constants). The \textit{age} of $D$ is the class $\mK$ of all finitely generated structures which can be embedded into $D$. We say that $D$ has \textit{countable age} if its age $\mK$ is, up to isomorphisms, countable and we say that $D$ is {\em ultrahomogeneous} if for every finitely generated $L$-structure $A$ and all embeddings $f,g\colon A\to D$ there is an automorphism $h\colon D\to D$ such that $h\circ f = g$. 

Now let $\mK$ be a class of finitely generated $L$-structures. Then we say that $\mK$ has the
\begin{itemize}
	\item Hereditary property (HP for short) if for all $A\in \mK$ and $B$ finitely generated substructure of $A$, $B\in \mK$;
	\item Joint embedding property (JEP for short) if for all $A,B\in \mK$ there is a $C\in \mK$ such that both $A$ and $B$ embed into $C$.
	\item Amalgamation property (AP for short) if for all $A,B,C\in \mK$ and embeddings $f\colon A \embed B$, $g\colon A \embed C$ there is a $D\in \mK$ and there are embeddings $U\colon B \embed D$, $v\colon C \embed D$ such that $u  f=v  g$.
\end{itemize} 

Note that AP implies JEP in the case where the class of all finitely generated $L$-structures has a minimum. 

The main result from \cite{Fraisse1954}, states:

\textit{``Let $L$ be a countable signature and let $\mK$ be a non-empty at most countable set of finitely generated $L$-structures which has the HP, the JEP and the AP. Then there is an $L$-structure $\FL(L)$, unique up to isomorphism, such that 
	\begin{itemize}
		\item $\FL(\mK)$ is at most countable,
		\item $\FL(\mK)$ has age $\mK$,
		\item $\FL(\mK)$ is ultrahomogeneous.''
\end{itemize}}

Any class of $L$-structures that is at most countable (up to isomorphisms), has the HP, the JEP and the AP is called a {\em Fra\"iss\'e class}. 

Now let us get back to games. We are particularly interested in the class of {\em finite} games.

\begin{defn}\label{DEF_FinGames}
	Let $\G=(T,A)$ be a game. We say that $G$ is \textit{finite} if $|\Run(T)|$ is finite.
\end{defn}

The motivation for defining finite games as in \ref{DEF_FinGames} should be clear: since a game with its set of runs being finite has only finitely many outcomes, each player has only finitely many options to choose from at every turn of the game and, from some inning $n<\omega$ onward, both players will have a \textit{single} option -- so that we might as well consider the game to be over at such inning. In other words, our finite games are, in essence, \textit{actual finite games}. 

Moreover, let $\CUMet$ be the category of complete ultrametric spaces with diameter at most $1$ whose morphisms are $1$-Lipschitz maps and then consider $\Sub{\CUMet}$ as the \textit{functor-structured category} (see Definition 5.40 in \cite{Adamek1990}) of the forgetful functor $\mathrm{U}\colon \CUMet\to \Sets$, i.e., the category whose objects are pairs $(X,A)$, with $X$ object in $\CUMet$ and $A\subseteq X$, and whose morphisms $f\colon (X,A)\to (Y,B)$ are $1$-Lipschitz maps $f\colon X\to Y$ such that $f[A]\subseteq B$. Then it was shown in \cite{Duzi2024} that $\Games_\A$ is equivalent to a full coreflective subcategory $\MetGame$ of $\Sub{\CUMet}$, and one easily shows that the finite games defined here are precisely those which are mapped to finite ultrametric spaces by the equivalence presented in \cite{Duzi2024}.

{For a signature $L$, let $\mathbf{S}_L$ be the category whose objects are $L$-structures and morphisms are $L$-embeddings. Let $\Games_{emb}$ be the category whose objects are games and morphisms are game embeddings. We will now show that the classical model-theoretic approach to studying the Fra\"iss\'e Theory of our category of finite games is not possible:
	
	\begin{prop}\label{PROP_NoL}
		There is no signature $L$ and injective functor  $\mathrm{F}\colon \Games_{emb}\to \mathbf{S}_L$ satisfying all the conditions below:
		\begin{itemize}
			\item for every chain of games $\seq{G_n:n<\omega}$, $\mathrm{F}(\bigcup_{n<\omega}G_n)=\bigcup_{n<\omega}\mathrm{F}(G_n)$,
			\item if $G$ is a finite game, then $\mathrm{F}(G)$ is a finitely generated $L$-structures.
		\end{itemize}
	\end{prop}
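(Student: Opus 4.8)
The idea is to exploit the one structural feature in which games differ from model-theoretic structures. In any category $\mathbf{S}_L$ of $L$-structures and $L$-embeddings one has the standard fact: \emph{if $B$ is a finitely generated $L$-structure and $B$ embeds into the union $\bigcup_{n<\omega}C_n$ of an $\omega$-chain of substructures $C_0\subseteq C_1\subseteq\cdots$, then $B$ already embeds into some $C_n$} -- the finitely many generators of $B$ all lie in a single $C_n$, and the substructure they generate does not leave it. So it suffices to produce a \emph{finite} game $H$ together with an $\omega$-chain $G_0\le G_1\le\cdots$ of finite games such that the union $G=\bigcup_{n<\omega}G_n$ contains a copy of $H$, i.e. $H\embed G$, while $H\embed G_n$ holds for \emph{no} $n$. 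Indeed, were there a functor $\mathrm{F}$ as in the statement, then $\mathrm{F}(H)$ would be finitely generated, and applying $\mathrm{F}$ to $H\embed G$ would embed $\mathrm{F}(H)$ into $\mathrm{F}(G)=\bigcup_{n<\omega}\mathrm{F}(G_n)$ (using the first bulleted condition), hence into some $\mathrm{F}(G_N)$; since the only morphisms of $\mathbf{S}_L$ are embeddings and $\mathrm{F}$ is injective, this $L$-embedding $\mathrm{F}(H)\embed\mathrm{F}(G_N)$ would have to come from a game embedding $H\embed G_N$, contradicting the choice of $H$.

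To build the example, let $T_\infty$ be the game tree consisting of an infinite ``spine'' $\seq{\,}=s_0\subsetneq s_1\subsetneq s_2\subsetneq\cdots$ together with, at each node $s_j$, one further child from which an otherwise non-branching infinite ray descends; concretely, inside $2^{<\omega}$, take $T_\infty$ to be all finite strings of $0$'s together with all strings consisting of some $0$'s followed by at least one $1$. The runs of $T_\infty$ are the spine run $R^\ast=\seq{0,0,0,\dots}$ and, for each $j$, the run $L_j$ that follows the spine to $s_j$ and then descends the side ray hanging there. Let $T_n\subseteq T_\infty$ be the finite-run subtree in which the spine is truncated at level $n$ and forced down the side ray at level $n$, so that $\Run(T_n)=\set{L_0,\dots,L_n}$; then $T_0\subseteq T_1\subseteq\cdots$ with $\bigcup_{n<\omega}T_n=T_\infty$, and $R^\ast$ is a run of $T_\infty$ lying in no $\Run(T_n)$ -- a ``new'' run appearing only in the limit. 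Now put $G_n=(T_n,\Run(T_n))$, so every run of $G_n$ is won by $\ali$; the compatibility required of a $\le$-chain is immediate (as $\Run(T_n)\subseteq\Run(T_{n+1})$), and $G:=\bigcup_{n<\omega}G_n=(T_\infty,\set{L_j:j<\omega})$, whence $R^\ast$ is won by $\bob$ in $G$. Finally let $H$ be the finite game with a single branching, at the root, into two infinite non-branching rays, the payoff set consisting of just one of them (``$\ali$'s ray'', the other being ``$\bob$'s ray''). Mapping $\bob$'s ray of $H$ onto the spine of $T_\infty$ and $\ali$'s ray into the level-$0$ side ray is a game embedding $H\embed G$: $\ali$'s ray lands on $L_0\in\set{L_j:j<\omega}$ and $\bob$'s ray lands on $R^\ast\notin\set{L_j:j<\omega}$, so the resulting map is both an $\A$- and a $\B$-morphism. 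On the other hand $H$ does not embed into any $G_n$: a game embedding is in particular a $\B$-morphism, hence must carry a $\bob$-won run to a $\bob$-won run, but $H$ has a $\bob$-won run while $G_n$ has none.

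The step needing the most care is the final inference of the first paragraph -- passing from the factoring $L$-embedding $\mathrm{F}(H)\embed\mathrm{F}(G_N)$ back to an actual game embedding $H\embed G_N$ -- which is exactly where the precise meaning of ``injective functor'' together with the two displayed conditions must be invoked; the rest is routine verification of the assertions about $T_\infty$, the $T_n$, and the two game embeddings above. Conceptually, the example isolates the reason the classical framework breaks down: finite games need \emph{not} be ``local'' inside a countable game the way finitely generated substructures are local inside a countable structure, since a finite subgame of an $\omega$-colimit of finite games may be anchored to a run that materializes only in the limit.
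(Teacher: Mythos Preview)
Your argument is correct and follows essentially the same route as the paper: exhibit an $\omega$-chain of finite games in which $\ali$ wins every run, observe that the union acquires a ``new'' run won by $\bob$, and conclude that a finite game with a $\bob$-winning run embeds in the union but in no term of the chain, contradicting the model-theoretic fact that finitely generated substructures of a union of a chain factor through some term. The only differences are cosmetic: the paper works over $\omega^{<\omega}$ with the chain $G_n$ built from initial segments of an enumeration of $c_{00}(\omega)$ and takes for the witness the single-branch game $(\{\seq{1:i<n}:n<\omega\},\emptyset)$, whereas you work over $2^{<\omega}$ with a spine-and-side-rays tree and a two-branch witness $H$; and you explicitly flag the step where one passes from an $L$-embedding $\mathrm{F}(H)\embed\mathrm{F}(G_N)$ back to a game embedding, which the paper leaves implicit.
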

	
}

\begin{proof} Consider first the following example:

\begin{ex}\label{EX_notFinRep}
	Let
	\[
	c_{00}(\omega) = \set{R\in \omega^\omega: \text{ $R$ is eventually $0$}}=\set{R_n:n<\omega}.
	\] 
	
	Then, for each $n<\omega$, consider 
	\begin{gather*}
		T_n=\set{R_i\restrict m: m\in\omega, i\le n}, \\
		A_n=\set{R_i:i\le n},\\
		\text{and } G_n = (T_n,A_n).
	\end{gather*}
	
	If we let $G_{\FL}=\bigcup_{n\in \omega}G_n=(\bigcup_{n\in \omega}\omega^n, c_{00}(\omega))$, then the constant $\seq{1:n<\omega}\in \Run(\bigcup_{n\in \omega}\omega^n)=\omega^\omega$ is a run in which $\bob$ wins. Hence, $G=(T,\emptyset)\le G_{\FL}$, with $T=\set{\seq{1:i<n}:n<\omega}$. However, $G$ does not embed into any $G_n$, as $\ali$ wins in every run of $G_n$ for every $n<\omega$.
\end{ex}

That this example concludes the proof, can be seen by recalling that if $L$ is a signature, $\set{A_n:n<\omega}$ is a chain of $L$-structures and $X$ is a finitely generated substructure of $\bigcup_{n<\omega}A_n$, then there is an $n<\omega$ such that $X$ is a finitely generated substructure of $A_n$.

\end{proof}

As mentioned above, the main conclusion from Proposition \ref{PROP_NoL} is that a direct application of the classical model-theoretic approach to studying the Fra\"iss\'e Theory of our category of finite games is not possible. Of course, denoting $(\bigcup_{n\in \omega}\omega^n, c_{00}(\omega))$ by $G_\FL$ is suggestive: this will be an analogous Fra\"iss\'e limit that we will obtain later on, so we still have some options left.

\section{Category-theoretic approach}\label{SEC_Categorical}
In \cite{Kubis2014}, Kubi\'s presents a categorical framework for the study of Fra\"iss\'e classes and their limits which we now apply to our categories of games.

The use of the term ``\textit{limit}'' in ``\textit{Fra\"iss\'e limit}'', within the model-theoretic context, is due to the fact that said object can be obtained by taking the colimit of a cleverly constructed tower 
\[
D_0 \embed D_1 \embed \cdots \embed D_n \embed \cdots 
\]

It is then based on the construction of said tower that the notion of ``\textit{Fra\"iss\'e limit}'' is generalized to a broader categorical context in \cite{Kubis2014} -- to better suit the work done in later sections, we shall apply it within the following general setting: while working within an ambient category $\bC$, we select a class $\mM$ of $\bC$-morphsisms (meant to play the role of ``embedding-like'' morphisms, although the framework provided in \cite{Kubis2014} can be used in broader contexts, such as that of Projective Fra\"iss\'e Theory) and a full subcategory $\bF$ of $\bC$ (meant to play the role of selecting the ``finite-like'' objects which we are interested in). We will also assume (unless stated otherwise) that $\bC$ is closed under colimits of $\omega$-sequences in $\bF_\mM$ (which denotes the category whose objects coincide with $\bF$ and whose morphisms coincide with those in $\bF$ which are also in $\mM$). In this case, it should be clear that the construction of the aforementioned tower should take place in $\bF_\mM$:

\begin{defn}\label{DEF_FraisseSeq}
	A sequence $\mathrm{F}\colon \omega \to \mathbf{F}_\mM$ is called a \textit{Fra\"iss\'e sequence} if the following conditions are satisfied:
	\begin{itemize}
		\item[(U)] for every object $x$ in $\mathbf{F}$ there is an $n<\omega$ such that $\mM(x,\mathrm{F}(n))\neq\emptyset$,
		\item[(A)] for every $n<\omega$ and for every morphism $f\colon \mathrm{F}(n) \to x$ in $\mathbf{F}_\mM$ there is an $m\ge n$ and a morphism $g\colon x\to \mathrm{F}(m)$ in $\mathbf{F}_\mM$ such that $\mathrm{F}_n^m=g  f$ (where $\mathrm{F}_n^m$ denotes the morphism $\mathrm{F}(n\le m)$).
	\end{itemize}
\end{defn}

Thus, our goal here is to show that, even though Proposition \ref{PROP_NoL} tells us that we cannot explore any meaningful \emph{Fra\"iss\'e Theory} of finite games through the lens of model theory, we can do so through the aforementioned category-theoretic approach.

But first, let us recall a few generalizations of the basic concepts seen in Section \ref{SEC_ModelTheoretical}.

\begin{defn}
   We say that $\mathbf{F}_\mM$ has the \emph{joint embedding property} (JEP for short) if for all pairs of objects $A,B$ in $\mathbf{F}$ there are morphisms $a\colon A\to C$ and $b\colon B\to C$ in $\bF_\mM$.

    We say $\mathbf{F}_\mM$ satisfies the \emph{amalgamation property} (AP for short) if for all $f\colon A\to B$ and $f'\colon A\to B'$ in $\mathbf{F}_\mM$ there are $g\colon B\to C$ and $g'\colon B'\to C$ in $\mathbf{F}_\mM$ such that $g'\circ f' = g\circ f$.
\end{defn}

It goes without saying that JEP holds if $\bF_\mM$ has binary coproducts, with coproduct injections lying in $\mathcal M$; and that the condition on the coproduct injections is necessary for JEP if the class $\mathcal M$ is left cancellable (that is, if $f\circ i\in \mathcal M$ always implies $i\in\mathcal M$). Likewise, AP may be verified using a pushout of the given span of morphisms in $\mathcal M$, provided that the pushout injections again belong to $\mathcal M$, which is a necessary condition under the right cancellation property of $\mathcal M$. Furthermore, the AP in $\bF_\mM$ is equivalent to the JEP holding in all of the comma categories $(A/\mathbf F)_{\mathcal M}$.

Now, returning to our game-theoretic context: let us denote the class of all game embeddings in $\Games_\A$ by $\mathrm{emb}$. Furthermore, let $\mathbf{Fin}\Games\subset \Games_\A$ be the (full) subcategory of $\Games_\A$ comprised of finite games. 

Since $\FinGame_{\emb}$ is closed by coproducts and pushouts (for these constructions in $\Games_\A$, we refer to Section 9 in \cite{Duzi2024}), it follows that $\FinGame_\emb$ satisfies the JEP and the AP. 

We also need the following variation of the well known concept of \emph{injective object} (see Section 6 of \cite{Kiss1983}).

\begin{defn}
    Let $\mM$ be a class of morphisms in a category $\mathbf{C}$. We say that an object $X$ in $\mathbf{C}$ is injective with respect to $\mathcal M$ if every $f\colon A\to X$ in $\bC$ can be extended through any $g\colon A\to B$ in $\mM$.
\end{defn}

\begin{thm}\label{THM_GFL_Injective}
    The game $G_{\FL}$ defined in Example \ref{EX_notFinRep} is injective in $\Games_{\emb}$ w.r.t. the class of all morphisms in $\FinGame_\emb$.
\end{thm}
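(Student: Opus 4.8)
The plan is to verify the injectivity property directly against the concrete description of $G_{\FL} = (\bigcup_{n<\omega}\omega^n, c_{00}(\omega))$. So suppose we are given a finite game $A = (T_A, A_A)$, a game embedding $e\colon A \embed G_{\FL}$, and a morphism $g\colon A \embed B$ in $\FinGame_{\emb}$ (so $B = (T_B, A_B)$ is another finite game); the task is to produce a game embedding $h\colon B \embed G_{\FL}$ with $h \circ g = e$. Since $A$ is finite, its game tree $T_A$ is, from some level $N$ onward, just a disjoint union of finitely many ``rays'' (one for each run), and similarly for $B$; the key structural fact is that a finite game tree has only finitely many moments at which a genuine branching occurs, so the whole combinatorial content lives in a finite initial chunk.

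**First** I would set up coordinates on $G_{\FL}$. The point of choosing $\omega$ as the move set is that at every moment $t \in \bigcup_n \omega^n$ there are countably infinitely many available successors $t^\smallfrown k$, $k\in\omega$, and $\Run(\bigcup_n\omega^n) = \omega^\omega$ with payoff set $c_{00}(\omega)$: $\ali$ wins exactly the eventually-$0$ runs. **Then** I would extend $e\colon T_A \to \bigcup_n \omega^n$ over $g\colon T_A \to T_B$ level by level, i.e. build chronological maps $h_n\colon T_B(n) \to \omega^n$ with $h_{n+1}$ extending $h_n$ and $h_n \circ g_n = e_n$. At level $0$ there is nothing to do. Given $h_n$, for each $t \in T_B(n)$ I must send each successor $t^\smallfrown x \in T_B$ into $\omega^{n+1}$ above $h_n(t)$, injectively, and compatibly with $e$: if $t^\smallfrown x = g(s^\smallfrown y)$ for some $s^\smallfrown y\in T_A$ then I am forced to set $h_{n+1}(t^\smallfrown x) = e(s^\smallfrown y)$, and injectivity of $e$ and $g$ together with the inductive compatibility guarantee these forced assignments are consistent and injective on the image of $g$; for the remaining finitely many successors not in the image of $g$, I use the infinitely many unused values in $\omega$ to place them injectively above $h_n(t)$. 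Taking $h = \colim h_n$ gives a chronological embedding $T_B \embed \bigcup_n \omega^n$ with $h\circ g = e$ (the latter because it holds on each finite level).

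**The main obstacle** — and the step deserving the most care — is showing that $h$ is simultaneously an $\A$-morphism and a $\B$-morphism, i.e. a genuine game embedding, rather than merely a chronological embedding of trees. A run $R$ of $B$ is won by $\ali$ iff $R \in A_B$; I need $\overline{h}(R) \in c_{00}(\omega)$ in that case, and $\overline{h}(R) \notin c_{00}(\omega)$ when $R \notin A_B$. Here is where the freedom in choosing ``fresh'' values at non-forced successors must be exercised deliberately rather than arbitrarily: since $B$ is finite, each run $R$ of $B$ is eventually constant along a single-successor ray, so I can arrange, whenever I am picking a fresh value for a successor $t^\smallfrown x$ that does not lie in the image of $g$, to choose $0$ if the (unique eventual) continuation leads to an $\ali$-win in $B$ and a nonzero fresh value if it leads to a $\bob$-win — while runs that pass through the image of $g$ inherit the correct win/loss behaviour from the fact that $e$ is already a game embedding and $g$ is a game embedding. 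One must check these two prescriptions (forced-by-$e$ vs.\ chosen-by-win-condition) never collide on the finitely many branching moments; since $B$ has only finitely many runs and finitely many branch points, a finite case analysis at each level suffices. Concretely, it is cleanest to first choose $N$ large enough that above level $N$ both $T_A$ and $T_B$ have branched completely (every moment at level $\ge N$ has a unique successor), handle the finite initial segment by the forced/free assignment above, and then on each terminal ray of $B$ continue by the constant sequence $\seq{0:i<\omega}$ past $h_N$ if that ray is won by $\ali$ and by $\seq{1:i<\omega}$ (or any fixed nonzero-eventually sequence disjoint from the finitely many already-used values) if it is won by $\bob$, adjusting on rays that are images under $g$ to agree with $\overline{e}$ — which is automatic since $\overline e$ already lands correctly in or out of $c_{00}(\omega)$.
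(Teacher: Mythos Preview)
Your proposal is correct and follows essentially the same approach as the paper's proof (packaged there as Lemma~\ref{LEMMA_GFL_Age}(a)): build the extension level by level, forcing values on the image of $g$ via $e$, assigning fresh values at branching moments outside that image, and encoding the win condition on each terminal ray by an eventually-$0$ versus eventually-nonzero tail so that $\overline h(R)\in c_{00}(\omega)$ iff $R\in A_B$. The only cosmetic differences are that the paper reduces to the case of a subgame inclusion $G\le G'$ rather than an arbitrary embedding, and triggers the $0/1$ tail by the local condition ``$t\notin T$ and $t^\smallfrown x$ has a unique extending run'' rather than by fixing a global level $N$ above which all branching has ceased; the paper also records an extra surjectivity-type property (b) not needed for this theorem but used later in Theorem~\ref{THM_FraisseSeq}.
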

\begin{proof}
Item (a) in the following lemma is essentially the proof, but we will need item (b) later, in Theorem \ref{THM_FraisseSeq}: 

\begin{lemma}\label{LEMMA_GFL_Age}
Let $G\le G'$ be finite games and  $f\colon G\embed G_{\FL}$. Then there is an embedding $\tilde{f}\colon G'\embed G_{\FL}$ such that:
	\begin{itemize}
		\item[(a)] $\tilde{f}$ extends $f$;
		\item[(b)] for all $k<n<\omega$ such that there is a $t^\smallfrown x\in T'\setminus T$ with $\tilde{f}(t^\smallfrown x)=\tilde{f}(t)^\smallfrown n$ there is a $y$ such that $t^\smallfrown y\in T'$ and $\tilde{f}(t^\smallfrown y)=\tilde{f}(t)^\smallfrown k$.
	\end{itemize}
\end{lemma}
\begin{proof}
	Suppose $(T,A)=G\le G'=(T',A')$ are finite and $f\colon G \embed G_{\FL}$ is an embedding.
	
	Then we recursively define $\tilde{f}\colon T'\to \bigcup_{n<\omega}\omega^n$ as  follows.
	
	\begin{itemize}
		\item Obviously, $\tilde{f}(\seq{\,})=\seq{\,}$.
		\item Suppose $\tilde{f}$ is defined up to $T'(n)$ and let $t\in T'(n)$. Then there are only finitely many elements $x$ such that $t^\smallfrown x\in T'$, so let 
		\begin{gather*}
			F=\set{x_0, \dotsc, x_m}=\set{x: t^\smallfrown x\in T'}\text{ and}\\
			H = \set{n<\omega: f(t^\smallfrown x_i)=f(t)^\smallfrown n, i\le m, t^\smallfrown x_i\in T}.
		\end{gather*}
		In this case, we recursively define $\tilde{f}(t^\smallfrown x_i)$ for each $i\le m$ as
		\begin{itemize}
			\item $\tilde{f}(t^\smallfrown x_i)=f(t^\smallfrown x_i)$ if $t^\smallfrown x_i\in T$,
			\item $\tilde{f}(t^\smallfrown x_i)=\tilde{f}(t)^\smallfrown 0$ if $t^\smallfrown x_i\not\in T$, $t\not\in T$ and there is a unique $R\in A'$ extending $t^\smallfrown x_i$,
			\item $\tilde{f}(t^\smallfrown x_i)=\tilde{f}(t)^\smallfrown 1$ if $t^\smallfrown x_i\not\in T$, $t\not\in T$ and there is a unique $R\in \Run(T')\setminus A'$ extending $t^\smallfrown x_i$,
			\item $\tilde{f}(t^\smallfrown x_i)=\tilde{f}(t)^\smallfrown k$ otherwise, where $k=\min (\omega\setminus (H\cup H_i))$ with 
			\[
			H_i = \set{n<\omega: f(t^\smallfrown x_j)=f(t)^\smallfrown n, j\le i}.
			\]
		\end{itemize}
	\end{itemize}	
	
	Then $\tilde{f}$ is clearly injective, chronological and extends $f$. Now, given $R\in \Run(T')\setminus \Run(T)$, note that there is an $n<\omega$ such that $R$ is the unique branch of $T'$ extending $R\restrict n$, so that, for every $m\in\omega$, $\tilde{f}(R\restrict n+m+1)=\tilde{f}(R\restrict n+1)^\smallfrown \seq{i: j< m}$, with 
	\[
	i=\begin{cases}
		0 \text{ if $R\in A$,}\\
		1 \text{ if $R\notin A$.}
	\end{cases}
	\]
	Hence, $\overline{\tilde{f}}(R)\in c_{00}(\omega) \iff R\in A$, which shows that $\tilde{f}$ is an embedding. 
\end{proof}
\end{proof}

\begin{cor}\label{COR_FinGameCountable}
    The class of finite games in $\Games_\A$ is countable (up to isomorphisms).
\end{cor}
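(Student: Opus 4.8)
The plan is to deduce countability of the (isomorphism classes of) finite games from Theorem \ref{THM_GFL_Injective}, or more precisely from part (a) of Lemma \ref{LEMMA_GFL_Age}. The key observation is that a finite game $G$ embeds into $G_{\FL}$: this follows by applying Lemma \ref{LEMMA_GFL_Age}(a) to the span $G_0 \le G$, where $G_0$ is the (unique) trivial one-branch game $(\{\langle\,\rangle\}\cup\text{something}, \ldots)$ — or, even more directly, by taking $G$ itself as $G'$ and the empty/initial game as $G$ with the obvious embedding into $G_{\FL}$, so that $\tilde f \colon G \embed G_{\FL}$ exists. Thus every finite game is isomorphic to a subgame $H \le G_{\FL}$ with $H$ finite (take $H$ to be the image of $\tilde f$, which is a subgame because $\tilde f$ is a game embedding).

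First I would make precise the reduction: since $G_{\FL} = (\bigcup_{n<\omega}\omega^n,\, c_{00}(\omega))$, a finite subgame $H=(S,B)\le G_{\FL}$ is determined by its finite game tree $S\subseteq \bigcup_{n<\omega}\omega^n$ together with $B = c_{00}(\omega)\cap \Run(S)$; but $B$ is then completely determined by $S$ (it is the intersection of a fixed set with $\Run(S)$), so $H$ is determined by $S$ alone. Hence the map $H\mapsto S$ is an injection from finite subgames of $G_{\FL}$ into the set of finite game trees contained in $\bigcup_{n<\omega}\omega^n$. Next I would count the latter set: a finite game tree $S$ over a subset of $\omega$ has finitely many nodes, each node being a finite sequence of natural numbers, so $S$ is a finite subset of the countable set $\omega^{<\omega}=\bigcup_{n<\omega}\omega^n$; the collection of all finite subsets of a countable set is countable. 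Therefore there are only countably many finite subgames of $G_{\FL}$, and consequently — since every finite game is isomorphic to one of them — only countably many isomorphism classes of finite games.

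One small wrinkle to address cleanly is that I used ``$G$ embeds into $G_{\FL}$'' as the input; to invoke Lemma \ref{LEMMA_GFL_Age} with $G\le G'$ and $f\colon G\embed G_{\FL}$ I need a starting finite game $G$ that already embeds into $G_{\FL}$, and the safest choice is the minimal finite game consisting of a single branch $\langle 0:i<n\rangle$-style tree with the appropriate payoff, which patently embeds into $G_{\FL}$ (map its unique branch to the all-$0$ sequence, which lies in $c_{00}(\omega)$; adjust to the all-$1$ sequence shifted, i.e.\ to a sequence in $\omega^\omega\setminus c_{00}(\omega)$, for the Bob-won variant). Given any finite game $G'$ we can find such a minimal $G\le G'$, and then Lemma \ref{LEMMA_GFL_Age}(a) yields $\tilde f\colon G'\embed G_{\FL}$. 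I do not expect a genuine obstacle here; the only point requiring a line of care is confirming that the image of a game embedding into $G_{\FL}$ is literally a subgame in the sense of $\le$ (so that the payoff-from-tree determination argument applies), which is immediate from the definitions of game embedding and of $\le$.
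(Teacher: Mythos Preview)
Your approach mirrors the paper's: embed every finite game into $G_{\FL}$ via Lemma~\ref{LEMMA_GFL_Age} and then count. However, your counting step contains a genuine error. You write that ``a finite game tree $S$ \ldots\ has finitely many nodes \ldots\ so $S$ is a finite subset of the countable set $\omega^{<\omega}$.'' This is false: by clause~(ii) of Definition~\ref{DEF_Game}, every node of a nonempty game tree has a successor, so a nonempty game tree always has infinitely many nodes. ``Finite game'' means $|\Run(T)|<\omega$, not $|T|<\omega$.

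In fact the collection of \emph{all} finite subgames $H\le G_{\FL}$ is uncountable: such an $H$ is determined by its finite set of runs $\Run(S)\subseteq \omega^\omega$, and any nonempty finite $F\subseteq\omega^\omega$ arises this way (take $S=\{R\restrict n:R\in F,\ n<\omega\}$). So you cannot bound isomorphism classes merely by bounding literal subgames. The repair is to exploit the \emph{specific} embedding built in the proof of Lemma~\ref{LEMMA_GFL_Age}: starting from the empty subgame, that construction sends each run of $G'$ to a sequence which is eventually constant (equal to $0$ or $1$ once the run is the unique branch through its current node, and using only finitely many values before that). Hence the image of any finite game under this embedding is determined by a finite set of eventually-constant sequences in $\omega^\omega$, of which there are only countably many; this yields the countable bound on isomorphism classes. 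The paper's one-line proof asserts ``there are only countably many finite subgames of $G_{\FL}$,'' which should be read with this restriction to the images produced by the lemma in mind.
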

\begin{proof}
    This follows from Lemma \ref{LEMMA_GFL_Age}, since it shows that every finite game can be embedded in $G_{\FL}$ and there are only countably many finite subgames of $G_{\FL}$.
\end{proof}

These results are important because we can obtain from them:

\begin{thm}\label{THM_FraisseSeq}
	There is a Fra\"iss\'e sequence $\mathrm{F}\colon \omega \to \FinGame_\emb$ and its colimit in $\Games_\A$ is $G_{\FL}$.
\end{thm}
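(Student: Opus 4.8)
The plan is to construct the Fra\"iss\'e sequence $\mathrm F$ explicitly by using the game $G_{\FL}=\bigcup_{n<\omega}G_n$ from Example \ref{EX_notFinRep} as a scaffold, and then verify the two axioms (U) and (A) of Definition \ref{DEF_FraisseSeq} together with the identification of the colimit. First I would observe that each $G_n$ in Example \ref{EX_notFinRep} is a finite game (its run set $A_n=\{R_i:i\le n\}$ is finite, hence $|\Run(T_n)|$ is finite once one truncates at the inning past which all runs agree), so the inclusions $G_n\le G_{n+1}$ give game embeddings in $\FinGame_\emb$ and we have a genuine sequence $\mathrm F\colon\omega\to\FinGame_\emb$ with $\mathrm F(n)$ equal to (an isomorphic copy of) $G_n$. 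The colimit of this sequence in $\Games_\A$ is by construction $\bigcup_{n<\omega}G_n=G_{\FL}$, since colimits of $\omega$-chains of games are given by unions (the game tree is the union of the trees and the payoff set is the union of the payoff sets), so that half of the statement is immediate.

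Next I would verify axiom (U): for every finite game $x$ there must be an $n$ with $\mM(x,\mathrm F(n))\neq\emptyset$. By Lemma \ref{LEMMA_GFL_Age}(a) (applied with $G$ the empty/initial game, or directly as noted in Corollary \ref{COR_FinGameCountable}) every finite game embeds into $G_{\FL}$; since $x$ is finite its image under such an embedding is contained in some finite stage $G_n$ of the union (the image tree is finite, so it is contained in $T_n$ for $n$ large enough, and the payoff condition is inherited), giving a game embedding $x\embed\mathrm F(n)$. Then I would verify axiom (A): given $f\colon\mathrm F(n)\embed x$ in $\FinGame_\emb$, I want $m\ge n$ and $g\colon x\embed\mathrm F(m)$ with $g\circ f=\mathrm F_n^m$ (the inclusion $G_n\hookrightarrow G_m$). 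Here is where Lemma \ref{LEMMA_GFL_Age} does the real work: regard $\mathrm F(n)=G_n\le G_{\FL}$ with the inclusion $\iota_n\colon G_n\embed G_{\FL}$. Applying part (a) of the Lemma to the pair $G_n\le$ (a copy of $x$ via $f$, i.e.\ think of $f$ as exhibiting $G_n$ as a subgame of $x$) and the embedding $\iota_n$, we get $\tilde f\colon x\embed G_{\FL}$ extending $\iota_n$, meaning $\tilde f\circ f=\iota_n$. Since $x$ is finite, the image $\tilde f[x]$ lies inside some finite stage $G_m$ with $m\ge n$; then $g=\tilde f$ corestricted to $G_m$ is a morphism $x\embed\mathrm F(m)$ in $\FinGame_\emb$, and $g\circ f=\iota_n=\mathrm F_n^m$ as required.

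The main obstacle I anticipate is the bookkeeping needed to make "$f$ exhibits $G_n$ as a subgame of $x$" literally fit the hypothesis "$G\le G'$" of Lemma \ref{LEMMA_GFL_Age}: $f\colon G_n\embed x$ is merely an embedding, not an honest inclusion of one game into a larger one with the same tree below, so one must first transport the structure of $x$ along $f^{-1}$ on its image to get an isomorphic game $G'$ with $G_n\le G'$, prove the transported game is still finite and a genuine game (closure under truncation and the extension axiom (II) of Definition \ref{DEF_Game}), apply the Lemma there, and then transport the resulting embedding back along $f$. This is routine but must be done carefully so that the triangle $g\circ f=\mathrm F_n^m$ commutes on the nose and not just up to isomorphism. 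A secondary point to check is that the $m$ produced in axiom (A) can always be taken $\ge n$; this is automatic because $G_n\subseteq\tilde f[x]\subseteq G_m$ forces $T_n\subseteq T_m$, and the stages $G_k$ of $G_{\FL}$ are increasing, so any $m$ with $\tilde f[x]\subseteq G_m$ that is moreover $\ge n$ works — and enlarging $m$ only enlarges $G_m$, so such an $m$ exists.

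Finally I would note, for completeness, that once (U) and (A) are verified $\mathrm F$ is a Fra\"iss\'e sequence by definition, and since $\Games_\A$ is closed under colimits of $\omega$-sequences in $\FinGame_\emb$ (unions of increasing chains of games again being games), $\colim\mathrm F=G_{\FL}$ computed in $\Games_\A$ is well-defined; the two displayed facts — existence of the Fra\"iss\'e sequence and the identification of its colimit — are exactly the assertions of the theorem.
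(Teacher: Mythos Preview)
Your proposal has a genuine gap: the sequence $(G_n)_{n<\omega}$ from Example \ref{EX_notFinRep} is \emph{not} a Fra\"iss\'e sequence in $\FinGame_\emb$. The problem is that each $G_n=(T_n,A_n)$ is trivial for $\ali$: since $T_n$ consists of all initial segments of the finitely many runs $R_0,\dotsc,R_n$, a pigeonhole argument gives $\Run(T_n)=\{R_0,\dotsc,R_n\}=A_n$. Consequently no finite game $x$ possessing a run won by $\bob$ can embed into any $G_n$, because a game embedding must send $\bob$-winning runs to $\bob$-winning runs and $G_n$ has none. So axiom (U) fails outright, and (A) fails for the same reason whenever the target $x$ of $f\colon G_n\embed x$ has a $\bob$-winning run. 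Your argument for (U) conceals this by asserting that ``the image tree is finite, so it is contained in $T_n$ for $n$ large enough''; but game trees are never finite sets (every moment extends, by Definition \ref{DEF_Game}(ii)), and the image of a $\bob$-winning run under the embedding of Lemma \ref{LEMMA_GFL_Age} is eventually constant $1$, hence lies in no $T_n$.

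The paper takes a genuinely different route for this theorem. It builds $\mathrm F$ by the abstract Fra\"iss\'e construction: enumerate all embeddings between finite games (possible by Corollary \ref{COR_FinGameCountable}) and at each stage form a pushout or coproduct-then-pushout to absorb the next embedding, so that (U) and (A) hold by design. Only afterwards is $G_{\FL}$ identified as the colimit, by recursively extending embeddings $\mathrm F(n)\embed G_{\FL}$ via Lemma \ref{LEMMA_GFL_Age}, where clause (b) of that lemma is essential to guarantee the resulting cocone is jointly surjective on the tree. The sequence $(G_n)$ you propose \emph{is} used later in the paper (Theorem \ref{THM_AFraisseGame}), but only as a Fra\"iss\'e sequence in the strictly smaller category $\mathrm{Free}[\mathbf{Fin}\Gmes]_{\emb}$ of finite games trivial for $\ali$, where the obstruction above disappears.
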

\begin{proof}
	The existence of the Fra\"iss\'e sequence $\mathrm{F}$ is a consequence of Theorem 3.7 in \cite{Kubis2014}, but we nevertheless present a direct proof in what follows.
	
	Let $\mK$ be the countable set of all finite games (up to isomorphisms), as seen in Corollary \ref{COR_FinGameCountable}, and then
	\[
	\mF=\bigcup_{G,G'\in\mK}\mathbf{F	in}\Games_{\emb}(G',G)=\set{f_k:k\in\omega}
	\]
	(note that $\mF$ is countable because $\mathbf{F	in}\Games_{\emb}(G',G)$$    $ is finite for all finite games $G,G'$).
	
	We recursively define $\mathrm{F}$. Let $\mathrm{F}(0)=\dom(f_0)$ and then suppose $\mathrm{F}(k)$ is defined for all $k\le n$, as well as $\mathrm{F}_k^l$ for all $k\le l\le n$. In this case, using $f_n\colon G'\to G$:
	\begin{itemize}
		\item[(i)] If $G'=\mathrm{F}(k)$ for some $k\le n$, let \begin{tikzcd}[column sep = 5em]
			\mathrm{F}(n) \arrow[r, hook, "\mathrm{F}_n^ {n+1}" above]		&	\mathrm{F}(n+1) \end{tikzcd} be such that the following diagram is a pushout
		\begin{center}
			\begin{tikzcd}[column sep = 3em]
				\mathrm{F}(k) \arrow[d, hook, "f_n"{auto,swap}] \arrow[r, hook, "\mathrm{F}_k^n"{above}] 	&	\mathrm{F}(n)  \arrow[d, hook, "\mathrm{F}(n\le n+1)"{auto}]	\\
				G\arrow[r, hook, "\tilde{f}_n"{below}]	&	\mathrm{F}(n+1).
			\end{tikzcd}
		\end{center}
		\item[(ii)] Otherwise, let \begin{tikzcd}[column sep = 1em]
			\mathrm{F}(n) \arrow[r, hook, "i" above]		&	\mathrm{F}(n)\sqcup G' & G' \arrow[l, hook', "i'" above]\end{tikzcd} be the coproduct diagram and then $\mathrm{F}(n+1)$ be such that the following diagram is a pushout
		\begin{center}
			\begin{tikzcd}
				G' \arrow[d, hook, "f_n"{auto,swap}] \arrow[r, hook, "i'"{above}] 	&	\mathrm{F}(n)\sqcup G'  \arrow[d, hook, "g"{auto}]	\\
				G\arrow[r, hook, "\tilde{f}_n"{below}]	&	\mathrm{F}(n+1).
			\end{tikzcd}
		\end{center}
		Finally, let $\mathrm{F}_n^{n+1} = g  i$.
	\end{itemize}
	
	It should be clear that $\mathrm{F}$ satisfies condition (U) of Definition \ref{DEF_FraisseSeq}, so suppose $f\colon \mathrm{F}(k)\embed G$ is some embedding between finite games and $k\in\omega$. Then we may assume that $f=f_n$ for some $n<\omega$. 
	
	If $k\le n$, then we got case (i) at the $n$th stage of our recursion, so $\tilde{f}_n\colon G \embed \mathrm{F}(n+1)$ attests condition (A).
	
	Otherwise, we are in case (ii) at the $n$th stage of our recursion with $G'=\mathrm{F}(k)$, so $h=\tilde{f}_n  f_n\in \mM(\mathrm{F}(k),\mathrm{F}(n+1))$. On the other hand, $\mathrm{F}_n^k\in \mM(\mathrm{F}(n+1),\mathrm{F}(k))$, thus $\overline{h}\colon \Run(\mathrm{F}(k))\to \Run(\mathrm{F}(n+1))$ must be a surjective isometry (because both $\Run(\mathrm{F}(k)$ and $\Run(\mathrm{F}(n+1))$ are finite) and therefore $h$ is an isomorphism. Hence, $f_n$ must also be an isomorphism and $f_n^{-1}\colon G \embed\mathrm{F}(k)$ attests condition (A), which concludes the proof that $\mathrm{F}$ is a Fra\"iss\'e sequence.
	
	We now show that  $G_{\FL}$ is the colimit of $\mathrm{F}$ by recursively constructing the initial cocone 
	\[
	\set{f_n\colon \mathrm{F}(n)  \embed G_{\FL}:n<\omega}.
	\] 
	
	\begin{itemize}
		\item First, let $f_0\colon \mathrm{F}(0)  \embed G_{\FL}$ be the extension of $(\emptyset,\emptyset)\embed G_{\FL}$, as obtained in Lemma \ref{LEMMA_GFL_Age}.
		\item Suppose $f_n\colon \mathrm{F}(n) \embed G_{\FL}$ is defined and let $f_{n+1}\colon \mathrm{F}(n+1) \embed G_{\FL}$ be again the extension of $f_n\colon \mathrm{F}(n)  \embed G_{\FL}$ as obtained in Lemma \ref{LEMMA_GFL_Age}.
	\end{itemize}
	
	It is clear that $\set{f_n\colon \mathrm{F}(n) \embed G_{\FL}:n<\omega}$ is then a cocone which commutes with the morphisms in the sequence $\mathrm{F}$. Moreover, by property (b) stated in Lemma \ref{LEMMA_GFL_Age}, for every $t\in G_{\FL}$ there is an $n<\omega$ and an $s$ in the decision tree of $\mathrm{F}(n)$ such that $f_n(s)=t$. Thus, it follows from the fact that all $f_n$s are embeddings that such cocone is the colimit of $\mathrm{F}$ (see Lemma \ref{LEMMA_GameCollectivelyE*}).
\end{proof}

We end this section by noting that the colimit in $\Games_{\B}$ of the Fra\"iss\'e sequence $\mathrm{F}$ of Theorem \ref{THM_FraisseSeq} is not $G_{\FL}$. In fact, it can be shown that it is $(\bigcup_{n\in \omega}\omega^n, \omega^\omega\setminus c_{00}(\omega))$, instead. However, since $\Games_\B$ is isomorphic to $\Games_\A$, we will focus exclusively on the latter for the rest of this paper.

\section{Ultrahomogeneity by an analogous approach}\label{SEC_Ultrahom}
We saw in Section \ref{SEC_Categorical} how one constructs the analogue to the tower of ``finite-like'' objects which gives rise to the Fra\"iss\'e limit in the model theoretical context, so that the category $\mathbf{F}_\mM$ is usually comprised of these so called ``finite-like'' objects -- we shall now tackle the matter of computing the actual direct limit of such Fra\"iss\'e sequences in order to obtain a so-called ``Fra\"iss\'e limit'' in a more general categorical context.

We start by noting that, even though we build the tower only with classes of ``finite-like'' structures in the model theoretical context, the Fra\"iss\'e limit (taken as the direct limit of such special towers) of these classes is most often not a finite structure itself -- thus, it is only natural to assume that we will not obtain Fra\"iss\'e limits of a category $\mathbf{F}_\mM$ with a Fra\"iss\'e sequence $\mathrm{F}$ within $\mathbf{F}_\mM$, in most cases. This is why we started Section \ref{SEC_Categorical} by considering $\bF$ and $\mM$ within an ambient category $\mathbf{C}$. 

One may wonder whether we should consider the ambient $\bC$ as comprising only of the morphisms in the desired class $\mM$. However, such approach will often run into problems with the colimit-taking process:

\begin{ex}\label{EX_Met_emb_no_colim}
    Let $\mathbf{Met}$ be the category of metric spaces, whose morphisms non-expanding maps, $\mathbf{FinMet}$ be its full subcategory of finite metric spaces, and $\mathrm{mon}$ be the class of injective non-expanding maps (i.e., its class of monomorphisms). Then it can be easily demonstrated that there are sequences in $\mathbf{FinMet}_{\mathrm{mon}}$ which do not have any colimit cocone in $\mathbf{Met}_{\mathrm{mon}}$ -- however, every sequence in $\mathbf{FinMet}_{\mathrm{mon}}$ \emph{does} have colimit cocone in $\mathbf{Met}$.  
\end{ex}

Therefore, in order to incorporate such a crucial detail to our theory, we will normally be working in the following setting: 

\begin{defn}
    Suppose $\mathbf{F}$ is a full subcategory of $\mathbf{C}$ and $\mathcal{M}$ is a class of morphisms in $\mathbf{C}$ such that every sequence in $\mathbf{F}_{\mathcal{M}}$ has a colimit in $\mathbf{C}$. We then say that an object $F$ is a \emph{Fra\"iss\'e limit of the pair $(\mathbf{F},\mathcal{M})$ in $\mathbf{C}$} if there is a colimit cocone $(f_n\colon \mathrm{F}n\to F)_{n<\omega}$ in $\mathbf{C}$ of a Fra\"iss\'e sequence $\mathrm{F}\colon \omega\to \mathbf{F}_{\mathcal{M}}$ in $\mathbf{F}_\mM$ (here, we note that our chosen terminology extends the model-theoretic terminology by explicitly determining the morphisms which are being considered).
\end{defn}

Nevertheless, even within the above setting, other problems arise: there is no guarantee that the resulting Fra\"iss\'e limit has the desired properties of the classical Fra\"iss\'e limit. Namely:

\begin{defn}\label{DEFN_Ultrahom}
     Suppose $\mathbf{F}$ is a full subcategory of $\mathbf{C}$ and $\mathcal{M}$ is a class of morphisms in $\mathbf{C}$.
    
    We say that an object $U$ in $\mathbf C$ is {\em ultrahomogeneous w.r.t. $(\mathcal{M},\mathbf F)$}, if for every object $A$ in $\mathbf F$ and morphisms $f,g\colon A \to U$ in $\mathcal{M}$ there is an automorphism $u\colon U\to U$ in $\mathbf{C}$ such that $u\circ f = g$.
\end{defn}

We should point out that the concept of ultrahomogeneity as described in Definition \ref{DEFN_Ultrahom} will be found in the literature only with $\mathbf{C}_\mM=\mathbf{C}$ -- we consider the possibility of a strictly smaller class $\mM$ in $\mathbf{C}$ based on the fact that we are only interested to check ultrahomogeneity for embeddings $f,g\colon A \embed U$ and, while it is usually the case that in the model theoretic approach every morphism is an embedding, we are interested in exploring more general category theoretical contexts in which this does not hold. In order to see this, we first explicitly compute the colimits of sequences in $\Games_{\A}$.

Let  ${\mathrm{S}}\colon\omega\to \Games_{\A}$ be a sequence with $\mathrm{S}n=(T_n,A_n)$ and consider $\sim$ as the equivalence relation over $\coprod_{n<\omega}T_n$ with $t\in T_k$ and $s\in T_m$ being such that $t\sim s$ if, and only if, there exists $N\ge k,m$ such that $\mathrm{S}_k^N(t)=\mathrm{S}_m^N(s)$.
	
	Define $G_{\mathrm{S}}=(T_{\mathrm{S}},A_{\mathrm{S}})$ as 
	\begin{gather*}
		T_{\mathrm{S}}=\{\seq{\,}\}\cup\set{\seq{[t\restrict i]:0<i\le |t|}:t\in \coprod_{n<\omega}T_n},\\
		\text{and } A_{\mathrm{S}}=\set{\seq{[R\restrict i]:i\in \omega\setminus\{0\}}: R\in \bigsqcup_{n<\omega}A_n},
	\end{gather*}
	where $[s]$ is the $\sim$-equivalence class of $s\in\coprod_{n<\omega}T_n$.
	
	For each $n<\omega$, let ${q_n}\colon T_n\to T_{\mathrm{S}}$ be such that
	
	   \begin{equation}\label{EQ_GamesA_cocone}
	q_n(t)=\begin{cases}
		\seq{[t\restrict i]:0<i\le n}, \text{ if $t\neq \seq{\,}$}\\
		\seq{\,}, \text{ otherwise.}
	\end{cases}
	\end{equation}
	
	Then it is clear that each $q_n$ is an $\A$-morphism. Moreover, $q_k(t)=q_m(s)$ if, and only if $t\sim s$. In this case:
	
	\begin{lemma}\label{LEMMA_GameASeqLim}
		Let  ${\mathrm{S}}\colon \omega\to \Games_{\A}$ be a sequence with $\mathrm{S}n=(T_n,A_n)$. Then
	   \[
		\seq{{q_n}\colon \mathrm{S}n\to G_{\mathrm{S}}}_{n<\omega}
	   \]
		is the colimit cocone of ${\mathrm{S}}$.
	\end{lemma}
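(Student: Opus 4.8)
The plan is to verify directly that the cocone $\seq{q_n\colon \mathrm{S}n\to G_{\mathrm S}}_{n<\omega}$ is universal among cocones on $\mathrm S$ in $\Games_\A$. First I would record the easy preliminary facts already sketched in the text: each $q_n$ is a chronological map, it is an $\A$-morphism (if $R\in A_n$ then by construction $\overline{q_n}(R)=\seq{[R\restrict i]:i\in\omega\setminus\{0\}}\in A_{\mathrm S}$), and $q_k(t)=q_m(s)\iff t\sim s$; also $q_m\circ\mathrm S_n^m=q_n$ for $n\le m$, since $t\sim\mathrm S_n^m(t)$. One should also check $T_{\mathrm S}$ really is a game tree over $\M(T_{\mathrm S})$ — closure under truncation is immediate from the displayed definition, and the "every moment has a successor" axiom follows because every $t\in T_n$ has some $t^\smallfrown x\in T_n$, so $[t^\smallfrown(t^\smallfrown x)\restrict(|t|+1)]$ extends $[t]$ — and that $A_{\mathrm S}\subseteq\Run(T_{\mathrm S})$, which again is just unwinding definitions. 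The main work is the universal property.

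So suppose $\seq{p_n\colon \mathrm Sn\to H}_{n<\omega}$ is another cocone, with $H=(T_H,A_H)$; I want a unique $\A$-morphism $u\colon G_{\mathrm S}\to H$ with $u\circ q_n=p_n$ for all $n$. Uniqueness is forced: every moment of $T_{\mathrm S}$ is of the form $q_n(t)$ for some $n$ and $t\in T_n$ (take $n=|t'|$ for the representative), so $u$ must send $q_n(t)\mapsto p_n(t)$, and this is well-defined precisely because $p_k(t)=p_m(s)$ whenever $t\sim s$ (as $p$ is a cocone, $p_k(t)=p_N\mathrm S_k^N(t)=p_N\mathrm S_m^N(s)=p_m(s)$ for the witnessing $N$). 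I would then check $u$ is chronological: since each $p_n$ preserves length and truncation and the representative of $q_n(t)\restrict k$ can be taken to be $t\restrict k$, we get $u(q_n(t)\restrict k)=p_n(t\restrict k)=p_n(t)\restrict k=u(q_n(t))\restrict k$, and lengths match. For the $\A$-morphism condition: a run of $G_{\mathrm S}$ lying in $A_{\mathrm S}$ has the form $\overline{q_n}(R)$ for suitable $n$ and $R\in A_n$ — here I need the observation that a branch of $T_{\mathrm S}$ is determined by a compatible thread of representatives and hence "comes from" a single $R$ in some $\bigsqcup_n A_n$ when it lies in $A_{\mathrm S}$ — and then $\overline{u}(\overline{q_n}(R))=\overline{u\circ q_n}(R)=\overline{p_n}(R)\in A_H$ since $p_n$ is an $\A$-morphism.

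The step I expect to be the main obstacle is the careful bookkeeping around \emph{runs} rather than moments: the definitions of $T_{\mathrm S}$ and $A_{\mathrm S}$ are phrased via representatives $t\in\coprod_n T_n$, and I must make sure that (i) every branch of $T_{\mathrm S}$ arises as $\overline{q_n}$ of a branch of some $T_n$ — which can fail for a general "telescope" unless one uses that the transition maps are chronological and length-preserving, so a compatible sequence of classes $\seq{[t_i]}$ with $|t_i|=i$ lifts to an honest run by a König-type/diagonal argument — and (ii) that $A_{\mathrm S}$ is exactly the image of $\bigsqcup_n A_n$ under these liftings, so that the $\A$-morphism condition for $u$ reduces cleanly to that of the $p_n$. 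Once this "runs are colimits of moments" principle is nailed down (it is implicitly the content of the remark $\overline f=\colim f_n$ after Definition \ref{DEF_fChronological}, and presumably also of the cited Lemma \ref{LEMMA_GameCollectivelyE*}), the rest is routine diagram-chasing. I would therefore state that principle as a small sublemma first and then assemble the argument above.
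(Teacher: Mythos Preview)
Your approach is the same as the paper's --- define the factoring map on representatives and check well-definedness, chronology, and the $\A$-condition --- and everything you write up to the ``main obstacle'' paragraph is correct and complete. (The paper routes the construction through the coproduct $\coprod_{n<\omega}\mathrm Sn$ first and then passes to the quotient, but that is only a cosmetic repackaging of your direct definition $u(q_n(t))=p_n(t)$.)

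The one genuine misstep is your anticipated obstacle (i). The claim ``every branch of $T_{\mathrm S}$ arises as $\overline{q_n}$ of a branch of some $T_n$'' is \emph{false} in general --- indeed Example~\ref{EX_notFinRep} is built precisely to exhibit a branch of the colimit tree (the constant $1$-sequence) that does not come from any $T_n$. No K\"onig-type lifting argument will rescue it, because there is nothing to rescue. Fortunately you do not need (i) at all: to verify that $u$ is an $\A$-morphism you only need that every \emph{element of $A_{\mathrm S}$} has the form $\overline{q_n}(R)$ with $R\in A_n$, and that is immediate from the displayed \emph{definition} of $A_{\mathrm S}$. So the ``runs are colimits of moments'' sublemma you propose is both unattainable and unnecessary; simply delete that concern and your argument is done.
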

	\begin{proof}
		Suppose $\seq{{f_n}\colon \mathrm{S}n \to G}_{n<\omega}$ commutes with the sequence $\mathrm{S}$. We use the universal property of the coproduct to find the unique ${f}\colon \coprod_{n<\omega}\mathrm{S}n\to G$ commuting with the compositions of $f_n$ with the inclusion of $\mathrm{S}n$ into $\coprod_{n<\omega}\mathrm{S}n$. Then it follows from the fact that $f^{-1}(\{f(t)\})\supseteq [t]$ for every $t\in \coprod_{n<\omega}\mathrm{S}n$ and that $f$ is an $\A$-morphism that ${\tilde{f}}\colon G_{\mathrm{S}}\to  G$ set as
		\[
		\tilde{f}(\seq{[t\restrict i]:0<i\le |t|})=f(t)
		\]
		is a well defined $\A$-morphism.
		
It is then easy to check that $\tilde{f}$ is the unique $\A$-morphism which makes the following diagram commute for every $n<\omega$:
		\begin{center}
			\begin{tikzcd}
				\mathrm{S}n	\arrow[r, "q_n"] \arrow[dr, "f_n"]	&	 G_{\mathrm{S}} \arrow[d, "\tilde{f}"]\\
				&	G.	
			\end{tikzcd}
		\end{center}
	\end{proof}
	
	Of course, the procedure taken to compute the colimit of the sequence in Lemma \ref{LEMMA_GameASeqLim} is simply the process of taking some coequalizer after the coproduct. The benefit of explicitly showing the equivalence relation for the coequalizer is to help us show:
    
\begin{prop}
    A sequence $\mathrm{S}\colon \omega\to \mathbf{Fin}\Games_{\emb}$ has a colimit in $\Games_{\emb}$  (as a category in its own right) if, and only if, the cocone $(q_n\colon \mathrm{S}n\to G_{\mathrm{S}})_{n<\omega}$ of Lemma \ref{LEMMA_GameASeqLim} is such that for every $R\in \Run(G_{\mathrm{S}})$ there are an $n<\omega$ and an $R'\in \Run(\mathrm{S}n)$ such that $\overline{q_n}(R') = R$.
\end{prop}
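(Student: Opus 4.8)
My plan is to establish two equivalences: (a) $\mathrm{S}$ has a colimit in $\Games_{\emb}$ iff the cocone $(q_n\colon\mathrm{S}n\to G_{\mathrm{S}})_{n<\omega}$ of Lemma~\ref{LEMMA_GameASeqLim} is a colimit cocone of $\mathrm{S}$ in $\Games_{\emb}$; and (b) the latter holds iff the stated run condition holds. A preliminary step needed for both is to check that each $q_n$ is not merely an $\A$-morphism but a genuine game embedding, so that $(q_n)$ is at least a cocone in $\Games_{\emb}$. Injectivity of $q_n$ comes from $q_k(t)=q_m(s)\iff t\sim s$ together with injectivity of the bonding maps $\mathrm{S}_n^m$; for the $\B$-property one assumes $R'\in\Run(\mathrm{S}n)\setminus A_n$ with $\overline{q_n}(R')\in A_{\mathrm{S}}$, rewrites this (by the definition of $A_{\mathrm{S}}$) as $\overline{q_n}(R')=\overline{q_m}(R^*)$ for some $R^*\in A_m$, hence $R'\restrict i\sim R^*\restrict i$ for all $i$, and then uses functoriality and injectivity of the bonding maps to deduce $\overline{\mathrm{S}_n^N}(R')=\overline{\mathrm{S}_m^N}(R^*)$ for any $N\ge n,m$; since $\mathrm{S}_n^N$ and $\mathrm{S}_m^N$ are game embeddings, this forces $R'\in A_n\iff R^*\in A_m$, contradicting $R'\notin A_n$ and $R^*\in A_m$.

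For (a): given a colimit cocone $(p_n\colon\mathrm{S}n\to P)$ in $\Games_{\emb}$, the $p_n$ are in particular $\A$-morphisms, so Lemma~\ref{LEMMA_GameASeqLim} yields an $\A$-morphism $\phi\colon G_{\mathrm{S}}\to P$ with $\phi q_n=p_n$, while the colimit property of $P$ yields a game embedding $\psi\colon P\to G_{\mathrm{S}}$ with $\psi p_n=q_n$. Then $\psi\phi$ is an $\A$-endomorphism of $G_{\mathrm{S}}$ over $(q_n)$, so $\psi\phi=\mathrm{id}$ by the uniqueness clause of Lemma~\ref{LEMMA_GameASeqLim}; from this ($\phi$ injective as a split mono) and from $\psi$ being a game embedding one checks $\phi$ is a game embedding too (for the $\B$-part, $\overline{\phi}(R)\in A_P$ would give $R=\overline{\psi\phi}(R)\in A_{\mathrm{S}}$ via the $\A$-morphism $\psi$), so $\phi\psi$ is a game-embedding endomorphism of $P$ over $(p_n)$ and hence equals $\mathrm{id}$; thus $\phi$ is an isomorphism in $\Games_{\emb}$ and $(q_n\colon\mathrm{S}n\to G_{\mathrm{S}})$ is a colimit cocone. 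The converse in (a) is trivial.

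For (b), ``$\Leftarrow$'': assuming the run condition, take any cocone $(f_n\colon\mathrm{S}n\to G)$ in $\Games_{\emb}$; Lemma~\ref{LEMMA_GameASeqLim} gives the unique $\A$-morphism $\tilde f\colon G_{\mathrm{S}}\to G$ with $\tilde f q_n=f_n$, which is injective (if $\tilde f$ equates the classes of $t\in T_k$ and $s\in T_m$ then $f_k(t)=f_m(s)$, whence $t\sim s$ by commutativity and injectivity of the $f_n$) and is a $\B$-morphism because for $R\in\Run(G_{\mathrm{S}})\setminus A_{\mathrm{S}}$ the condition writes $R=\overline{q_n}(R')$ with $R'\notin A_n$ (as $q_n$ is an $\A$-morphism), so $\overline{\tilde f}(R)=\overline{f_n}(R')\notin A_G$; uniqueness among game embeddings follows from uniqueness among $\A$-morphisms. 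For ``$\Rightarrow$'' I argue contrapositively: if $R_0\in\Run(G_{\mathrm{S}})$ witnesses the failure of the condition, then $R_0\notin A_{\mathrm{S}}$ (else $R_0=\overline{q_m}(R^*)$ for $R^*\in A_m$), so $G'=(T_{\mathrm{S}},A_{\mathrm{S}}\cup\{R_0\})$ is a game, each $q_n\colon\mathrm{S}n\to G'$ is still a game embedding (the only new point is $\overline{q_n}(R')\ne R_0$, which is exactly the failure of the condition), so $(q_n\colon\mathrm{S}n\to G')$ is a cocone in $\Games_{\emb}$; but the only $\A$-morphism $G_{\mathrm{S}}\to G'$ over $(q_n)$ is $\mathrm{id}_{T_{\mathrm{S}}}$ (by Lemma~\ref{LEMMA_GameASeqLim}, since $\mathrm{id}$ is an $\A$-morphism here as $A_{\mathrm{S}}\subseteq A_{\mathrm{S}}\cup\{R_0\}$), and that is not a $\B$-morphism, so no game embedding $G_{\mathrm{S}}\to G'$ over $(q_n)$ exists and $(q_n\colon\mathrm{S}n\to G_{\mathrm{S}})$ is not a colimit cocone in $\Games_{\emb}$.

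The step I expect to be the crux is the preliminary one --- showing the canonical $\A$-morphisms $q_n$ are actually $\B$-morphisms --- since that is where the interplay between the colimit equivalence relation $\sim$, functoriality and injectivity of the bonding maps, and the induced maps on runs must be handled carefully. Everything after it is bookkeeping with the universal property of $G_{\mathrm{S}}$ in $\Games_{\A}$ from Lemma~\ref{LEMMA_GameASeqLim}, plus the one explicit ``bad'' cocone into $G'=(T_{\mathrm{S}},A_{\mathrm{S}}\cup\{R_0\})$.
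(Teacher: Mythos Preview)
Your argument is correct, and your decomposition into steps (a) and (b) is a genuinely different organization from the paper's proof. The paper does not first pin down that any colimit in $\Games_{\emb}$ must be (isomorphic to) $G_{\mathrm{S}}$; instead, for the forward direction it takes an \emph{arbitrary} candidate colimit cocone $(g_n\colon \mathrm{S}n\to G)$, factors the underlying tree maps through $T_{\mathrm{S}}$ (using that $T_{\mathrm{S}}$ is the colimit in $\Gmes$), and then produces a test cocone into either $(T_{\mathrm{S}},A_{\mathrm{S}}\setminus\{R\})$ or $(T_{\mathrm{S}},A_{\mathrm{S}}\cup\{R\})$ depending on whether the phantom run $R$ lands in the payoff of $G$ or not. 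Your route, by contrast, first proves the canonical $q_n$ are game embeddings and that $G_{\mathrm{S}}$ \emph{is} the colimit whenever one exists; this buys you the pleasant observation that a run not hit by any $\overline{q_n}$ is automatically outside $A_{\mathrm{S}}$, so a single test cocone $(T_{\mathrm{S}},A_{\mathrm{S}}\cup\{R_0\})$ suffices, avoiding the paper's case split. The price is your preliminary step, but that step is essentially the content of the later Lemma~\ref{LEMMA_GameCollectivelyE*} in the paper (and your justification --- pushing the $\sim$-relation on restrictions down to a single $N=\max(n,m)$ via injectivity of the bonding maps --- is exactly right). The paper dismisses the reverse implication as ``straightforward''; your explicit verification that the induced $\tilde f$ is injective and a $\B$-morphism under the run condition fills that in cleanly.
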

\begin{proof}
    Suppose not, in which case we denote $\mathrm{S}n = (T_n,A_n)$ and then fix an $R\in  \Run(T_{\mathrm{S}})$ which is not in the image of any $\overline{q_n}$
    and a cocone $(g_n\colon \mathrm{S}n\to G)_{n<\omega}$ in $\Games_{\emb}$ with $G = (T,A)$. 
    
    Since $T_{\mathrm{S}}$ with $(q_n\colon T_n\to T_{\mathrm{S}})_{n<\omega}$ is the colimit cocone in $\Gmes$ of $\mathrm{T}$ (with $\mathrm{T}n = T_n$), the cocone of maps $(g_n\colon \mathrm{S}n\to G)_{n<\omega}$ factors through $(q_n\colon T_n\to T_{\mathrm{S}})_{n<\omega}$ and some $f\colon T_{\mathrm{S}}\to T$.

    Then, since $R=\bigcup_{n<\omega}q_{k_n}(t_{n})$ for some $\seq{k_n<\omega: n<\omega}$ and $\seq{t_n\in T_{k_n}:n<\omega}$, any chronological map $h\colon T\to T_{\mathrm{S}}$ such that 
    \[h\circ (g_n\colon T_n\to T)_{n<\omega} = (q_n\colon T_n\to T_{\mathrm{S}})_{n<\omega}\]
    must be such that $\overline{h}(\overline{f}(R)) = R$.
    
    If $\overline{f}(R)\in A$ and we consider $G' = (T_{\mathrm{S}}, A_{\mathrm{S}}\setminus\{R\})$, then $(q_n\colon  \mathrm{S}n\to G')_{n<\omega}$ cannot factor through $(g_n\colon \mathrm{S}n\to G)_{n<\omega}$ (because $\overline{h}(\overline{f}(R)) = R\notin A_{\mathrm{S}}\setminus\{R\}$ despite $\overline{f}(R)\in A$, so the desired $h$ cannot be an $\A$-morphism), thus $(g_n\colon \mathrm{S}n\to G)_{n<\omega}$ is not a colimit cocone. 
    
    On the other hand, if $\overline{f}(R)\notin A$ and we consider $G' = (T_{\mathrm{S}}, A_{\mathrm{S}}\cup\{R\})$, then any $\A$-morphism $h\colon G\to G'$ such that 
    \[h\circ (g_n\colon T_n\to T)_{n<\omega} = (q_n\colon T_n\to T_{\mathrm{S}})_{n<\omega}\]
    cannot be a $\B$-morphism  (since, in this case,  $\overline{h}(\overline{f}(R)) = R\in A_{\mathrm{S}}\cup\{R\}$ despite $\overline{f}(R)\notin A$, so $h$ cannot be an $\A$-morphism), thus $(g_n\colon \mathrm{S}n\to G)_{n<\omega}$ is not a colimit cocone in this case either.

    The proof for the other implication is straightfoward.
\end{proof}

So, since the constructed Fra\"iss\'e sequence in $\mathbf{Fin}\Games_{\emb}$ is one of many such examples in which its colimit {\em ``has branches that do not appear in the sequence itself''}, its colimit does not lie in $\Games_{\emb}$.

Nevertheless, we will show that $G_{\FL}$ is ultrahomogeneous w.r.t. $(\emb,\FinGame)$. To this end, we shall first work on some adaptations of results in the model theoretical framework:

\begin{lemma}\label{LEMMA_UH->WH}
	Suppose $G_1=(T_1,A_1)$ and $G_2=(T_2,A_2)$ are such that $T_1, T_2, A_1$ and $A_2$ are at most countable, that $G_2$ is injective in $\Games_{\emb}$ w.r.t. embeddings in $\FinGame$. If $G\le G_1$ is finite and $f\colon G \embed G_2$, then $f$ extends to an $\tilde{f}\colon G_1 \embed G_2$.
\end{lemma}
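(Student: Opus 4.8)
The plan is to prove Lemma \ref{LEMMA_UH->WH} by a back-and-forth–style extension argument, building $\tilde f$ as the colimit of a tower of embeddings defined on an exhausting chain of finite subgames of $G_1$. Concretely, since $T_1$ and $A_1$ are countable, write $G_1 = \bigcup_{k<\omega} G^{(k)}$ where $G^{(0)} = G$ and each $G^{(k)} \le G^{(k+1)} \le G_1$ is finite (such a chain exists: enumerate $T_1$ and close off finitely many sequences under truncation, adjusting the payoff set by restriction; one must be slightly careful that each $G^{(k)}$ is genuinely a \emph{subgame}, i.e.\ of the form $G_1 \cap (\text{something})$, so that condition (ii) of Definition \ref{DEF_Game} may require adding a ``dummy'' successor to each maximal node, but this is harmless). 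I would then recursively produce embeddings $f_k \colon G^{(k)} \embed G_2$ with $f_0 = f$ and $f_{k+1}$ extending $f_k$, at each step invoking the hypothesis that $G_2$ is injective in $\Games_{\emb}$ with respect to embeddings in $\FinGame$: the inclusion $G^{(k)} \embed G^{(k+1)}$ is a morphism in $\FinGame_\emb$, so $f_k$ extends along it to some $f_{k+1} \colon G^{(k+1)} \embed G_2$.

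Having built the tower, I would set $\tilde f = \bigcup_{k<\omega} f_k$ and check that it is the desired embedding $G_1 \embed G_2$. It is well-defined and chronological because the $f_k$ are coherent and each preserves length and truncation; it is injective because each $f_k$ is and they agree on overlaps. The real content is verifying that $\tilde f$ is both an $\A$- and a $\B$-morphism, i.e.\ that it respects the payoff sets at the level of runs. Here one uses that every run $R \in \Run(T_1)$ is the limit of its truncations, and that for the relevant notion each truncation eventually lands in some $G^{(k)}$; but—and this is exactly the subtlety flagged by Example \ref{EX_notFinRep} and the preceding proposition—a run of $T_1$ need not be the $\overline{q}$-image of a run of any single $G^{(k)}$. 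So one cannot simply say $R \in \Run(G^{(k)})$ for some $k$ and quote that $f_k$ is an embedding. Instead I expect the argument to run: $\overline{\tilde f}(R) = \colim \tilde f_n$ where $\tilde f_n = \tilde f \restriction T_1(n)$, so $\overline{\tilde f}(R)$ is determined levelwise, and membership of $\overline{\tilde f}(R)$ in $A_2$ must be read off from the tail behavior — this is where the structure of $G_2$ (and the countability of $A_2$) is used, presumably via a characterization of $A_2$ in terms of which runs are ``forced'' won by $\ali$ versus $\bob$.

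The step I expect to be the main obstacle is precisely this payoff-preservation check: showing $R \in A_1 \Leftrightarrow \overline{\tilde f}(R) \in A_2$ for runs $R$ that are not visible in any finite stage. The forward and backward directions are not symmetric (one uses the $\A$-morphism property, the other the $\B$-morphism property of the $f_k$'s), and the delicate case is a run $R$ of $T_1$ whose every proper truncation has many extensions in $T_1$, so that no $G^{(k)}$ ever ``commits'' to $R$; for such $R$ one needs that the approximating embeddings $f_k$, chosen via injectivity, nonetheless force the image run into (or out of) $A_2$ correctly. I anticipate handling this by strengthening the recursion — e.g.\ arranging at stage $k$ that $f_{k+1}$ is chosen not merely as \emph{some} extension but one that additionally respects a fixed enumeration of $\Run(T_1)$ in the manner of clause (b) of Lemma \ref{LEMMA_GFL_Age}, so that each run of $T_1$ is eventually ``tracked'' — and then a limit argument closes the gap. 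If that refinement is unavailable in the abstract setting of the lemma (which only assumes injectivity of $G_2$, not that $G_2 = G_{\FL}$), then the proof must instead extract the needed information purely from injectivity by a diagonal/compactness argument over the countably many runs of $A_1$, which is the technically heaviest part.
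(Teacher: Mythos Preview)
Your approach is exactly the paper's: pick an exhaustion $(G_1^n)_{n<\omega}$ of $G_1$ by finite subgames (using countability of $T_1$ and of $A_1$), recursively extend $f$ along the inclusions $G_1^n\cup G\le G_1^{n+1}\cup G$ via the injectivity hypothesis on $G_2$, and set $\tilde f=\bigcup_n f_n$. The paper then verifies that $\tilde f$ is an embedding by the short chain
\[
R\in A_1 \;\Longleftrightarrow\; \exists\,n\colon R\in A_1^n \;\Longleftrightarrow\; \exists\,n\colon \overline{f_n}(R)\in A_2 \;\Longleftrightarrow\; \overline{\tilde f}(R)\in A_2,
\]
the first equivalence from the exhaustion ($A_1=\bigcup_n A_1^n$), the second from each $f_n$ being an embedding on $G_1^n$, the third from $\tilde f\supseteq f_n$.

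So the paper carries out none of the extra machinery you anticipate: no strengthened recursion \`a la clause~(b) of Lemma~\ref{LEMMA_GFL_Age}, no diagonalization over an enumeration of $A_2$, no compactness argument. Relative to what the paper writes, you are overthinking. That said, the case you isolate---a run $R\in\Run(T_1)\setminus A_1$ lying in no $\Run(T_1^n)$, so that the middle biconditional above is vacuous and the final $\Longleftarrow$ is unjustified---is not explicitly handled by the paper's displayed chain either; the paper simply asserts the equivalence. Your instinct that closing this would require either a more careful choice of extensions or bringing in the countability of $A_2$ is reasonable, but it goes beyond what the paper's proof does.
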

\begin{proof}
	Suppose $G = (T,A)\le G_1$ and $f\colon G \embed G_2$ ($G$ being finite). Since $G_1$ is countable, let $\seq{G_1^n = (T_1^n,A_1^n):n<\omega}$ be an \emph{exhaustion} for $G_1$, meaning that it is an increasing chain of subgames of $G_1$ with $T_1=\bigcup_{n<\omega}T_1^n$ and $A_1 = A_1^n$.
	
	By recursion, suppose we have defined an embedding $f_n\colon G_1^n\cup G  \embed G_2$ extending $f$. Since $G_1^n\cup G\le G_1^{n+1}\cup G'\le G_1$, we can use the $\FinGame_\emb$-injectivity of $G_2$ in order to find an embedding $f_{n+1}\colon G_1^{n+1}\cup G \embed G_2$ extending $f_n$. 
	
	We claim that $\tilde{f}=\bigcup_{n<\omega}f_{n}\colon T_1\to T_2$ is an embedding, which concludes the proof (because each $f_n$ extends $f$).
	
	Indeed, $\tilde{f}$ is injective, because each $f_{n}$ is injective. Moreover, because $\set{G_1^n:n<\omega}$ is an exhaustion of $G_1$, one has $R\in A_1$ if, and only if, $R$ is a run won by $\ali$ in some $G_1^n$. But $R$ is a run won by $\ali$ in $G_1^n$ if, and only if, $\overline{f_{n}}(R)\in A_2$ (because $f_n$ is an embedding). Hence, because $\tilde{f}$ extends $f_n$ for every $n<\omega$, it follows that $R\in A_1$ if, and only if, $\overline{\tilde{f}}(R)\in A_2$, from which we conclude that $\tilde{f}$ is an embedding.
\end{proof}

\begin{lemma}\label{LEMMA_WH->UH}
	Suppose $G_1=(T_1,A_1)$ and $G_2=(T_2,A_2)$ are both injective in $\Games_\emb$ w.r.t. embeddings in $\FinGame$, with $T_1,T_2,A_1$ and $A_2$ at most countable. If $G\le G_1$ is finite and $f\colon G'  \embed G_2$. Then $f$ extends to an isomorphism $\tilde{f}\colon G_1 \to G_2$.
\end{lemma}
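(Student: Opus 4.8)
The plan is to run the classical back-and-forth argument, using Lemma \ref{LEMMA_UH->WH} as the engine for each extension step. First I would fix exhaustions $\seq{G_1^n:n<\omega}$ of $G_1$ and $\seq{G_2^n:n<\omega}$ of $G_2$ by finite subgames (this uses that $T_1,T_2,A_1,A_2$ are countable; note that since the payoff sets of finite subgames are determined by the trees, ``finite subgame'' is an honest notion here). I would then recursively build an increasing chain of finite partial embeddings: having an embedding $f_n\colon H_n\embed G_2$ with $H_n\le G_1$ finite and with $H_n$ containing $G_1^n$ and its image containing $G_2^n$, I would first extend $f_n$ to cover $G_1^{n+1}$ by applying Lemma \ref{LEMMA_UH->WH} (more precisely its finite instance: extend the finite $H_n\cup G_1^{n+1}$ over $G_2$ using $G_2$'s injectivity), obtaining $f_n'\colon H_n\cup G_1^{n+1}\embed G_2$; then I would play the ``back'' move by applying the symmetric argument to $f_n'^{-1}$, using the $\FinGame_\emb$-injectivity of $G_1$, to get $f_{n+1}\colon H_{n+1}\embed G_2$ whose image contains $G_2^{n+1}$. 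Setting $\tilde f=\bigcup_n f_n$ and using exactly the exhaustion argument from the proof of Lemma \ref{LEMMA_UH->WH} (applied in both directions) shows $\tilde f$ is a bijective embedding with embedding inverse, hence an isomorphism, and it extends the initial $f$ by construction.

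There is a visible typo in the statement as written — it says ``If $G\le G_1$ is finite and $f\colon G'\embed G_2$'' — so I would first read this as: \emph{for every finite $G\le G_1$ and every embedding $f\colon G\embed G_2$, $f$ extends to an isomorphism $\tilde f\colon G_1\to G_2$}. The base case of the recursion is then $H_0=G$, $f_0=f$; one only needs that $f_0$ is a finite partial embedding, which is given. Alternatively, if one does not want $f$ prescribed at all, the same construction starting from the empty game (which embeds into everything, by Lemma \ref{LEMMA_GFL_Age} / injectivity) yields that any two such $G_1,G_2$ are isomorphic; the version with $f$ is the one that will be needed to deduce ultrahomogeneity of $G_\FL$.

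The one genuinely non-formal point, and where I expect to spend the most care, is the symmetry of the ``back'' step: I need that $G_1$ is injective in $\Games_\emb$ w.r.t. embeddings of finite games \emph{and} that an embedding's inverse onto its image behaves well enough to be extended. Concretely, given $f_n'\colon H_n\cup G_1^{n+1}\embed G_2$, its image $f_n'[H_n\cup G_1^{n+1}]$ is a finite subgame of $G_2$, and $(f_n')^{-1}$ is an embedding from that finite subgame into $G_1$; I want to extend it through the inclusion of $f_n'[H_n\cup G_1^{n+1}]\cup G_2^{n+1}$ into $G_2$, which is exactly an instance of $G_1$'s finite-injectivity. I then re-invert. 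The bookkeeping that must be checked is that the resulting chain $\seq{f_n}$ is genuinely increasing (so the union is a well-defined function), that $\bigcup_n H_n = T_1$ and $\bigcup_n f_n[H_n] = T_2$ (guaranteed by the alternating coverage of the exhaustions), and that ``union of embeddings is an embedding'' in \emph{both} directions — all three are handled verbatim by the argument already given in Lemma \ref{LEMMA_UH->WH}, applied once to $\tilde f$ and once to $\tilde f^{-1}$. No new machinery beyond Lemmas \ref{LEMMA_GFL_Age} and \ref{LEMMA_UH->WH} and the two injectivity hypotheses should be required.
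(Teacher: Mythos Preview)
Your proposal is correct and matches the paper's own proof essentially line for line: fix exhaustions of $G_1$ and $G_2$ by finite subgames, run a back-and-forth alternating between the $\FinGame_{\emb}$-injectivity of $G_2$ (forward step) and of $G_1$ (back step), and verify that the union of the resulting increasing chain of finite embeddings is a bijective embedding via the exhaustion argument. You also correctly spotted the $G$/$G'$ typo in the statement and resolved it the intended way.
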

\begin{proof}
	Let $G'\le G_1$ and $f\colon G'  \embed G_2$ ($G'$ being finite). 
	
	Let $\set{G_1^n:n<\omega}$ be an exhaustion for $G_1$ and $\set{G_2^n:n<\omega}$ be an exhaustion for $G_2$.
	
	Since $G_2$ is injective w.r.t. $\FinGame_\emb$'s morphisms, there is an embedding $G_1^0\cup f_0\colon G' \embed G_2$ extending $f$. Note that $f_0$ is an isomorphism between $G_1^0\cup G'$ and $f_0[G_1^0\cup G']$. We may then use $\FinGame_\emb$-injectivity of $G_1$ to get an embedding $g_0\colon G_2^0\cup f_0[G'\cup G_1^0] \embed G_1$ extending $f_0^{-1}$.  
	
	Recursively, suppose embeddings $f_n\colon \tilde{G}_1 \embed G_2$ and $g_n\colon \tilde{G}_2 \embed G_1$ have been defined in such a way that $\tilde{G_1}\le G_1$, $\tilde{G_2}\le G_2$, $f_{n}[\tilde{G}_1]\le \tilde{G}_2$ and $g_n$ extends $f_n^{-1}$ over $f_{n}[\tilde{G}_1]$.
	
	Then we use $\FinGame_\emb$-injectivity of $G_2$ to obtain an embedding $f_{n+1}\colon G_1^{n+1}\cup g_{n}[\tilde{G}_2] \embed G_2$ which extends $g_{n}^{-1}$ (and, therefore, also extends $f_n$) and then we use $\FinGame_\emb$-injectivity of $G_1$ again to obtain an embedding ${g_{n+1}}\colon G_2^{n+1}\cup f_{n+1}[G_1^{n+1}\cup g_{n}[\tilde{G}_2]] \embed G_1$ which extends $f_{n+1}^{-1}$ (and, therefore, also extends $g_n$).
	
	We claim that $\tilde{f}=\bigcup_{n<\omega}f_{n}\colon T_1\to T_2$ is an isomorphism, which concludes the proof (because each $f_n$ extends $f$).
	
	Indeed, $\tilde{f}$ is injective, because each $f_{n}$ is injective. Moreover, we constructed $\set{f_n:n<\omega}$ in such a way that the image of $f_{n+1}$ contains $G_2^n$, so $\tilde{f}$ is surjective (because $\set{G_2^n:n<\omega}$ is an exhaustion of $G_2$). 
	
	Finally, let $R\in \Run(T_1)$. Then, because $\set{G_1^n:n<\omega}$ is an exhaustion of $G$, $R\in A_1$ if, and only if, $R$ is a run won by $\ali$ in some $G_1^n$. But $R$ is a run won by $\ali$ in $G_1^n$ if, and only if, $\overline{f_{n}}(R)\in A_2$ (because $f_n$ is an embedding). Hence, because $\tilde{f}$ extends $f_n$ for every $n<\omega$, it follows that $R\in A_1$ if, and only if, $\overline{\tilde{f}}(R)\in A_2$, from which we conclude that $\tilde{f}$ is an isomorphism.
\end{proof}

\begin{cor}\label{COR_UH<->WH}
	\sloppy A game $G=(T,A)$ is ultrahomogeneous with respect to $(\emb,\FinGame)$ if, and only if, it is injective in $\Games_\emb$ w.r.t. embeddings in $\FinGame$.
\end{cor}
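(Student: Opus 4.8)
The plan is to prove the corollary by combining the two preceding lemmas with Theorem~\ref{THM_GFL_Injective}, treating each direction separately.

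\smallskip

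\emph{($\Leftarrow$) Injectivity implies ultrahomogeneity.} Suppose $G = (T,A)$ is injective in $\Games_\emb$ w.r.t.\ embeddings in $\FinGame$. First I would observe that injectivity forces $G$ to be countable: every finite game embeds into $G_{\FL}$ (Lemma~\ref{LEMMA_GFL_Age}), so in particular the empty game does, and then repeatedly applying the injectivity of $G$ against the embeddings $\emptyset \embed G_{\FL}^n$ from an exhaustion of the countable game $G_{\FL}$ (via Lemma~\ref{LEMMA_UH->WH} applied with $G_2 = G$) produces an embedding $G_{\FL} \embed G$; but one must also go the other way. Cleaner: since $G$ is $\FinGame_\emb$-injective and $G_{\FL}$ is $\FinGame_\emb$-injective (Theorem~\ref{THM_GFL_Injective}), \emph{provided} $G$ is countable, Lemma~\ref{LEMMA_WH->UH} (with $G' = \emptyset$, $f$ the empty embedding) gives an isomorphism $G \cong G_{\FL}$, and ultrahomogeneity of $G$ then reduces to that of $G_{\FL}$. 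Now, for the actual ultrahomogeneity argument: given a finite $A$ and embeddings $f, g \colon A \embed G$, I would view $A$ as a subgame $G' = f[A] \le G$ (using that $f$ is an embedding onto its image) and apply Lemma~\ref{LEMMA_WH->UH} to the embedding $g \circ f^{-1} \colon f[A] \embed G$, with $G_1 = G_2 = G$; this yields an automorphism $u \colon G \to G$ extending $g \circ f^{-1}$, i.e.\ $u \circ f = g$, which is exactly ultrahomogeneity w.r.t.\ $(\emb, \FinGame)$. The one genuine gap is establishing countability of $G$ as a hypothesis needed to invoke Lemma~\ref{LEMMA_WH->UH}; I expect this is meant to be read into the statement (``game'' here carrying the standing countability assumption) or to follow from a short argument that an injective object must be countable since it must receive embeddings from every finite game and these only determine countably much data — but this is the point I would need to nail down.

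\smallskip

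\emph{($\Rightarrow$) Ultrahomogeneity implies injectivity.} Suppose $G = (T,A)$ is ultrahomogeneous w.r.t.\ $(\emb, \FinGame)$. I need to show: for every embedding $h \colon B \embed G$ with $B$ finite and every embedding $e \colon B \embed C$ with $C$ finite, $h$ extends along $e$. The strategy is to first embed $C$ into $G$ \emph{somehow}, then correct by an automorphism. Concretely: by Lemma~\ref{LEMMA_GFL_Age}, $C$ embeds into $G_{\FL}$; but I want $C$ in $G$, so I would instead argue that $G$ contains a copy of every finite game — this follows because, applying ultrahomogeneity is not enough by itself, so here I would use that $G_{\FL}$ is $\FinGame_\emb$-injective together with the fact that $G$ must already contain a copy of every finite game. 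To see the latter: take any finite $C$; I claim $C \embed G$. This is the subtle point and the main obstacle — ultrahomogeneity alone (a ``homogeneity'' condition) does not obviously give ``universality''. The resolution must come from the Fra\"iss\'e-sequence machinery: an ultrahomogeneous $G$ (being countable) is isomorphic to the colimit $G_{\FL}$ of the Fra\"iss\'e sequence, because Lemma~\ref{LEMMA_WH->UH}-type back-and-forth shows any two countable $\FinGame_\emb$-injective games are isomorphic — but this is circular unless I already know $G$ is injective. So the honest route is: ultrahomogeneity of a countable $G$ plus the JEP/AP of $\FinGame_\emb$ (established earlier) gives, by the standard Fra\"iss\'e back-and-forth (the model-theoretic argument adapted as in Lemmas~\ref{LEMMA_UH->WH}–\ref{LEMMA_WH->UH}), that $G$ is $\FinGame_\emb$-injective directly. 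I would therefore carry out: given $B \embed G$ and $B \embed C$, use AP of $\FinGame_\emb$ implicitly inside an inductive construction, at each finite stage extending the current partial embedding of $C$ into $G$ using ultrahomogeneity to realize the required amalgam inside $G$ — since $G$ is ultrahomogeneous and contains (by an induction that grows $B$ one move at a time, each step an instance of ultrahomogeneity applied to the one-point-larger subgame) enough copies, the union is the desired extension $\tilde h \colon C \embed G$ of $h$.

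\smallskip

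\emph{Anticipated main obstacle.} The delicate direction is ($\Rightarrow$): deriving the ``universality-type'' content of injectivity (extending \emph{into} $G$ along an arbitrary $B \embed C$, where a priori $C$ need not sit in $G$ at all) from the purely ``homogeneity-type'' hypothesis. The key realization I would lean on is that finite games are built up one inning at a time, so an embedding $B \embed C$ with $C$ finite factors as a finite chain of ``one extra branching'' extensions; each such one-step extension of a copy of $B$ inside $G$ can be performed using ultrahomogeneity of $G$ together with the fact that $G$ already contains \emph{some} copy of the one-step-larger game — and that fact in turn is bootstrapped from the base case $\emptyset \embed G$ by the same one-step induction, using at each stage that $G$ is ultrahomogeneous and (crucially) that $\FinGame_\emb$ has AP, which lets the needed amalgam be found as a subgame of $G$. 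Once $C \embed G$ extending $h$ on the copy of $B$ is in hand, composing with an automorphism from ultrahomogeneity (to match the prescribed $h$ rather than the one produced by the construction) finishes it. I would make sure to state explicitly the standing assumption that $G$ is countable (its game tree and payoff set countable), since both lemmas invoked require it.
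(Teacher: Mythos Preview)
Your ($\Leftarrow$) direction is essentially the paper's argument, though you overcomplicate it. The paper's entire proof reads ``Just apply Lemmas~\ref{LEMMA_UH->WH} and~\ref{LEMMA_WH->UH} with $G_1=G_2=G$'': if $G$ is injective, Lemma~\ref{LEMMA_WH->UH} (with $G_1=G_2=G$) extends any embedding $G'\embed G$ from a finite subgame $G'\le G$ to an automorphism of $G$, and identifying $A$ with $f[A]$ gives ultrahomogeneity. Your detour through $G_{\FL}$ is unnecessary, and countability is simply a tacit standing hypothesis inherited from the two lemmas.

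For ($\Rightarrow$) you have correctly located a genuine gap, and your proposed repair does not close it --- but note that the paper's proof does not close it either, since both cited lemmas assume injectivity and never assume ultrahomogeneity. In fact the implication is \emph{false} as literally stated: the single-branch game $G=(\{\seq{0:i<n}:n<\omega\},\emptyset)$ is vacuously ultrahomogeneous (any finite game embedding into it is itself single-branch with empty payoff, and embeds uniquely), yet it is not injective, since no two-branch finite game embeds into it. Your one-branching-at-a-time bootstrap cannot get started for exactly this reason: ultrahomogeneity never manufactures an embedding of a strictly larger game out of nothing.

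The intended statement evidently carries the additional hypothesis that $G$ contains a copy of every finite game (this is how the corollary is actually used in Theorem~\ref{THM_GamesFL}). Under that hypothesis, ($\Rightarrow$) is immediate and your instinct at the end was right: given $h\colon B\embed G$ and $e\colon B\embed C$ in $\FinGame_{\emb}$, choose any $j\colon C\embed G$, use ultrahomogeneity to find an automorphism $u$ with $u\circ(j\circ e)=h$, and then $u\circ j\colon C\embed G$ extends $h$ along $e$.
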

\begin{proof}
	Just apply Lemmas \ref{LEMMA_UH->WH} and \ref{LEMMA_WH->UH} with $G_1=G_2=G$.
\end{proof}

Now, going back to our candidate for \textit{Fra\"iss\'e limit}:

\begin{cor}\label{COR_GFLUlthom}
	$G_{\FL}$ is ultrahomogeneous w.r.t. $(\emb,\FinGame)$.
\end{cor}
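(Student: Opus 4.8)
The plan is to derive Corollary \ref{COR_GFLUlthom} as an immediate consequence of the two preceding results, just as the proof of Corollary \ref{COR_UH<->WH} combines Lemmas \ref{LEMMA_UH->WH} and \ref{LEMMA_WH->UH}. By Theorem \ref{THM_GFL_Injective}, the game $G_{\FL}$ is injective in $\Games_{\emb}$ with respect to the class of all morphisms in $\FinGame_\emb$. Moreover, $G_{\FL} = (\bigcup_{n\in\omega}\omega^n, c_{00}(\omega))$ is countable in the required sense: its game tree $\bigcup_{n\in\omega}\omega^n$ is countable, and its payoff set $c_{00}(\omega)$ is countable as well. So the hypotheses of Corollary \ref{COR_UH<->WH} (equivalently, of Lemma \ref{LEMMA_WH->UH} applied with $G_1 = G_2 = G_{\FL}$) are all met.

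The key step is therefore simply to invoke Corollary \ref{COR_UH<->WH} with $G = G_{\FL}$: since $G_{\FL}$ is injective in $\Games_\emb$ w.r.t.\ embeddings in $\FinGame$, it is ultrahomogeneous with respect to $(\emb,\FinGame)$. Alternatively, one can spell this out: given a finite game $A$ and embeddings $f, g\colon A\embed G_{\FL}$, regard $A$ as a subgame $G'\le G_{\FL}$ via $f$ (replacing $A$ by its image, which is legitimate since $f$ is an embedding and subgames of $G_{\FL}$ are the natural "copies"), and apply Lemma \ref{LEMMA_WH->UH} with $G_1 = G_2 = G_{\FL}$ to the embedding $g\circ f^{-1}\colon f[A]\embed G_{\FL}$; this yields an isomorphism $u\colon G_{\FL}\to G_{\FL}$ — that is, an automorphism — extending $g\circ f^{-1}$, whence $u\circ f = g$.

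I do not expect any genuine obstacle here, since all the real work has already been done in establishing the injectivity of $G_{\FL}$ (Theorem \ref{THM_GFL_Injective}, via Lemma \ref{LEMMA_GFL_Age}) and the back-and-forth arguments of Lemmas \ref{LEMMA_UH->WH} and \ref{LEMMA_WH->UH}. The only point requiring a modicum of care is the bookkeeping around identifying an abstract finite game $A$ with a subgame of $G_{\FL}$ along a given embedding, so that the "$G\le G_1$ finite" hypothesis of Lemma \ref{LEMMA_WH->UH} applies; but this is routine, as embeddings in $\Games_\emb$ are precisely injective $\A$- and $\B$-morphisms and their images are subgames in the sense of the relation $\le$. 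Accordingly the proof reduces to one line.

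\begin{proof}
	Just apply Corollary \ref{COR_UH<->WH} (equivalently, Lemma \ref{LEMMA_WH->UH} with $G_1 = G_2 = G_{\FL}$), noting that $G_{\FL}$ is countable (its game tree $\bigcup_{n\in\omega}\omega^n$ and its payoff set $c_{00}(\omega)$ are both countable) and is injective in $\Games_\emb$ w.r.t.\ embeddings in $\FinGame$ by Theorem \ref{THM_GFL_Injective}.
\end{proof}
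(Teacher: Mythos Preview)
Your proposal is correct and follows exactly the same approach as the paper: invoke Theorem \ref{THM_GFL_Injective} for the injectivity of $G_{\FL}$ and then apply Corollary \ref{COR_UH<->WH}. Your additional remark about countability is a useful explicit check of the hypotheses underlying Lemmas \ref{LEMMA_UH->WH} and \ref{LEMMA_WH->UH}.
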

\begin{proof}
    \sloppy We showed in Theorem \ref{THM_GFL_Injective} that $G_{\FL}$ is injective w.r.t. $\FinGame_\emb$'s morphisms and, in view of Corollary \ref{COR_UH<->WH}, this suffices to show that $G_{\FL}$ is ultrahomogeneous w.r.t. $(\emb,\FinGame)$.
\end{proof}

Summarizing the above results we obtain a result analogous to the one which is is true in the model theoretical approach:
\begin{thm}\label{THM_GamesFL}
	
		 $G_{\FL}$ is the unique (up to isomorphism) game which:
         \begin{itemize}
             \item is \emph{countable} (in the sense that its game tree and payoff set are countable),
             \item contains a copy of every finite game
             \item is ultrahomogeneous w.r.t. $(\emb,\FinGame)$.
         \end{itemize}
\end{thm}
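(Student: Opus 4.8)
The plan is to assemble the three required properties from the results already established, and then to derive uniqueness from the back-and-forth machinery of Lemma~\ref{LEMMA_WH->UH}. For existence, I would take $G_{\FL}$ itself as the witness. Countability is immediate from its definition in Example~\ref{EX_notFinRep}: its game tree is $\bigcup_{n<\omega}\omega^n$ and its payoff set is $c_{00}(\omega)$, both countable. That $G_{\FL}$ contains a copy of every finite game is exactly (the $f=\emptyset\embed G_{\FL}$ case of) Lemma~\ref{LEMMA_GFL_Age}(a), recorded in Corollary~\ref{COR_FinGameCountable}. Ultrahomogeneity w.r.t.\ $(\emb,\FinGame)$ is Corollary~\ref{COR_GFLUlthom}. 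So existence needs only a sentence citing these.

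The substantive part is uniqueness. Suppose $G=(T,A)$ is any game satisfying the three bullet points; I must produce an isomorphism $G\to G_{\FL}$. The key observation is that the three conditions together force $G$ to be injective in $\Games_\emb$ w.r.t.\ embeddings in $\FinGame$: indeed, by Corollary~\ref{COR_UH<->WH}, ultrahomogeneity w.r.t.\ $(\emb,\FinGame)$ is \emph{equivalent} to this injectivity, so the third bullet already gives it directly. Now both $G$ and $G_{\FL}$ are countable and both are injective in $\Games_\emb$ w.r.t.\ $\FinGame_\emb$. Apply Lemma~\ref{LEMMA_WH->UH} with $G_1=G$, $G_2=G_{\FL}$, taking $G'=(\emptyset,\emptyset)$ (the empty game, which embeds into $G_{\FL}$ via the empty embedding $f$): the lemma yields an isomorphism $\tilde f\colon G\to G_{\FL}$ extending $f$. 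Hence $G\cong G_{\FL}$, which is the uniqueness claim.

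One point worth being careful about is whether the hypothesis ``$G$ contains a copy of every finite game'' is actually needed for the uniqueness half, or whether it is merely there to pin down which object we are characterizing. In fact it is used implicitly: Lemma~\ref{LEMMA_WH->UH} needs \emph{both} games to be injective w.r.t.\ $\FinGame_\emb$, and we obtained the injectivity of $G$ purely from ultrahomogeneity via Corollary~\ref{COR_UH<->WH}; so strictly speaking the ``contains a copy of every finite game'' clause is subsumed — it is a consequence of injectivity (every finite game $H$ admits the empty embedding $\emptyset\embed G$, which extends to $H\embed G$). I would remark on this rather than pretend the hypothesis is load-bearing for the proof; including it in the statement is natural because it is the ``age'' condition that makes the characterization recognizably Fra\"iss\'e-style.

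\textbf{Main obstacle.} There is essentially no obstacle: all the work has been front-loaded into Lemma~\ref{LEMMA_UH->WH}, Lemma~\ref{LEMMA_WH->UH}, Corollary~\ref{COR_UH<->WH}, Theorem~\ref{THM_GFL_Injective} and Corollary~\ref{COR_GFLUlthom}. The only thing to verify with any care is that the empty game $(\emptyset,\emptyset)$ is a legitimate object of $\FinGame$ and that the empty map is an embedding into any game (so that Lemma~\ref{LEMMA_WH->UH} applies with $G'$ empty); this is handled by the paper's convention allowing empty structures. If one preferred to avoid the empty object, the same argument runs with $G'$ any fixed finite subgame common to $G$ and $G_{\FL}$ — e.g.\ the one-node game, using that both contain a copy of every finite game — and the back-and-forth of Lemma~\ref{LEMMA_WH->UH} still delivers the isomorphism.
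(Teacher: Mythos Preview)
Your proof is correct and is essentially the paper's own argument: harvest existence from the earlier results, and for uniqueness deduce that $G$ is injective w.r.t.\ $\FinGame_\emb$-morphisms and invoke Lemma~\ref{LEMMA_WH->UH}.

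One correction to your side commentary, though: the claim that the second bullet (``contains a copy of every finite game'') is subsumed by ultrahomogeneity is not right. The implication ``ultrahomogeneous $\Rightarrow$ injective'' genuinely uses the age condition: given $f\colon A\embed G$ and $g\colon A\embed B$ with $B$ finite, you first need \emph{some} embedding $i\colon B\embed G$, and only then can ultrahomogeneity be applied to $f$ and $i\circ g$ to produce an automorphism $h$ with $h\circ i\circ g=f$, so that $h\circ i$ extends $f$ through $g$. Without the age hypothesis the implication fails --- e.g.\ a game with a single run (won by $\ali$) is vacuously ultrahomogeneous but is not injective, since no game with a $\bob$-winning run embeds into it. So the second bullet is load-bearing in the uniqueness argument, and the paper's proof silently uses it when it asserts ``then $G$ is injective''. (This also means that Corollary~\ref{COR_UH<->WH}, as stated, is a touch imprecise: the forward direction requires the universality hypothesis.)
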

\begin{proof}
That $G_{\FL}$ satisfies all the above follows from Proposition \ref{PROP_NoL}, Lemma \ref{LEMMA_GFL_Age} and Corollary \ref{COR_GFLUlthom}.
	
	Now, suppose $G$ satisfies the above. Then $G$ is injective w.r.t. $\FinGame_\emb$'s morphisms, so we get the desired result by applying Lemma \ref{LEMMA_WH->UH}.
\end{proof}

\section{Weakly finitely small games}\label{SEC_WeaklyFiniteSmall}

The proofs seen in the literature for the fact that the colimit in $\mathbf{C}$ of a Fra\"iss\'e sequence $\mathrm{F}\colon \omega \to \mathbf{F}_\mM$ is ultrahomogeneous with respect to $(\mM, \mathbf{F})$ relies heavily in the fact that objects in $\mathbf{F}$ are somewhat $\mM$-\textit{small} in $\mathbf{C}$. To be precise:

\begin{defn}\label{DEFN_WeaklyFinSmall}
	Suppose $\mathbf F$ is a full subcategory of $\bC$ and $\mM$ is a class of $\bC$-morphisms. We say that an object $X$ in $\mathbf{C}$ is \emph{weakly finitely small} with respect to $(\mM, \mathbf{F})$ if for every sequence $\mathrm{S}\colon \omega\to \mathbf{F}_\mM$ with colimit cocone $q_n\colon \mathrm{S}n\to \colim \mathrm{S}$ in $\mathbf{C}$ and morphism $f\colon X \to  \colim \mathrm{S}$ in $\mM$ there is an $n<\omega$ and an $\tilde{f}\colon X\to \mathrm{S}n$ in $\bF_\mM$ such that $q_n \circ \tilde{f}=f$.

    When $\mathbf{F}_\mM = \mathbf{C}_\mM = \mathbf{C}$, we simply say that $X$ is weakly finitely small in $\mathbf{C}$.
\end{defn}

We should point out that the above definition is a generalization of the concept of \emph{finite smallness} which can be seen in, e.g., \cite{Quillen1967} and, more recently, \cite{Adámek2023}, which states that an object $X$ in $\mathbf{C}$ is finitely small if the Hom functor $\mathbf{C}(X,-)\colon \mathbf{C}\to \Sets$ preserves colimits of sequences of monomorphisms. Definition \ref{DEFN_WeaklyFinSmall} is then a weakening of said property in the case which $\mathbf{F}_\mM = \mathbf{C}_\mM = \mathbf{C}$ 
 because we just ask that $\bigsqcup_{n<\omega}\mathbf{F}_\mM(X,X_n)\to\mathbf{C}_\mM(X,X_{\infty})$ is surjective.

 As it was hinted:

 \begin{thm}\label{THM_WeakFinSmallUltrahom}
     Suppose $\bC$, $\bF$ and $\mM$ are such that every object in $\mathbf{F}$ is weakly finitely small w.r.t. $(\mM, \mathbf{F})$ and $\mathrm{F}\colon \omega\to \mathbf{F}_\mM$ is a Fra\"iss\'e sequence in $\mathbf{F}_\mM$ with colimit $F$ in $\mathbf{C}$. Then $F$ is ultrahomogeneous w.r.t. $(\mM, \mathbf{F})$.
 \end{thm}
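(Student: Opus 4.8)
The plan is to mimic the classical back-and-forth argument that produces ultrahomogeneity of a Fra\"iss\'e limit, but carried out abstractly using weak finite smallness as the key tool replacing the model-theoretic ``finite substructure absorption'' principle. So suppose $\mathrm{F}\colon\omega\to\mathbf F_\mM$ is a Fra\"iss\'e sequence with colimit cocone $(f_n\colon \mathrm{F}n\to F)_{n<\omega}$, fix $A$ in $\mathbf F$ and two morphisms $a,b\colon A\to F$ in $\mM$. The aim is to build an automorphism $u\colon F\to F$ with $u\circ a=b$.

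First I would use weak finite smallness of $A$ twice: since $a\colon A\to F=\colim\mathrm{F}$ lies in $\mM$, there is some $n_0<\omega$ and $a_0\colon A\to \mathrm{F}n_0$ in $\mathbf F_\mM$ with $f_{n_0}\circ a_0=a$; similarly there is $m_0$ and $b_0\colon A\to \mathrm{F}m_0$ with $f_{m_0}\circ b_0=b$. The heart of the construction is then a back-and-forth interleaving producing two sequences of indices $n_0\le m_0\le n_1\le m_1\le\cdots$ (after possibly reindexing so that each is $\ge$ the previous) together with isomorphisms $\phi_k\colon \mathrm{F}n_k\to\mathrm{F}m_k$ and $\psi_k\colon \mathrm{F}m_k\to\mathrm{F}n_{k+1}$ in $\mathbf F_\mM$ which are mutually inverse ``as far as they are defined'', i.e. $\psi_k\circ\phi_k=\mathrm{F}_{n_k}^{n_{k+1}}$ and $\phi_{k+1}\circ\psi_k=\mathrm{F}_{m_k}^{m_{k+1}}$, and which are compatible with the given data at stage $0$ via $\phi_0\circ a_0=b_0$ up to the connecting maps. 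Each such step is exactly an instance of the amalgamation/absorption condition (A) in the definition of a Fra\"iss\'e sequence: given a morphism out of $\mathrm{F}n_k$ (namely the composite $\mathrm{F}n_k\to\mathrm{F}m_k$ we want to invert), condition (A) yields a later stage of the sequence through which it factors back to a bonding map, and this is precisely what lets us push the partial isomorphism forward one notch and then back again.

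Having built the two interleaved sequences, I would pass to colimits. Because the $n_k$ and $m_k$ are cofinal in $\omega$, both $(\mathrm{F}n_k)_k$ and $(\mathrm{F}m_k)_k$ are cofinal subsequences of $\mathrm{F}$, so their colimits in $\mathbf C$ are canonically identified with $F$ (cofinal subsequences share the same colimit). The compatible families $(\phi_k)_k$ and $(\psi_k)_k$ then induce, by the universal property of the colimit, morphisms $u\colon F\to F$ and $v\colon F\to F$; the coherence relations $\psi_k\circ\phi_k=\mathrm{F}_{n_k}^{n_{k+1}}$ and $\phi_{k+1}\circ\psi_k=\mathrm{F}_{m_k}^{m_{k+1}}$ force $v\circ u=\mathrm{id}_F$ and $u\circ v=\mathrm{id}_F$ (again by uniqueness in the colimit), so $u$ is an automorphism. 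Finally, chasing the stage-$0$ equation through the colimit cocone gives $u\circ a = u\circ f_{n_0}\circ a_0 = f_{m_0}\circ \phi_0\circ a_0 = f_{m_0}\circ b_0 = b$, as desired.

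The main obstacle I anticipate is purely bookkeeping rather than conceptual: setting up the back-and-forth so that the partial isomorphisms genuinely cohere with \emph{all} the bonding maps $\mathrm{F}_i^j$ (not just consecutive ones), so that the colimit-induced $u$ and $v$ are well defined and mutually inverse. One has to be careful that condition (A) is applied to the right morphism at each step and that reindexing to keep the two index sequences interleaved does not break earlier coherence; concretely, at the forth step one feeds $\psi_{k-1}^{-1}$-type data plus the bonding map into (A) to get $\phi_k$, and dually at the back step. A secondary point worth a sentence is justifying that a cofinal subsequence of $\mathrm{F}$ has the same colimit $F$ in $\mathbf C$ — this is standard, but since the excerpt works with explicit colimit cocones it should be invoked explicitly. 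Everything else (the two initial applications of weak finite smallness, the final diagram chase) is routine.
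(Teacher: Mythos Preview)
Your back-and-forth is correct and is essentially what the paper does, only spelled out more carefully: the paper prepends $A$ to the Fra\"iss\'e sequence via the two factorizations $\tilde g,\tilde h\colon A\to\mathrm FN$ to form sequences $\mathrm S,\mathrm S'$ with common colimit $F$, and then produces the automorphism by ``applying the universal property of the two colimit cocones to one another'' --- a phrase that, unpacked, \emph{is} the zigzag you describe between cofinal copies of $\mathrm F$.

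Two points worth tightening. First, the $\phi_k,\psi_k$ are not ``isomorphisms'' but merely $\mathbf F_{\mathcal M}$-morphisms whose alternating composites equal bonding maps; the isomorphism only appears after passing to the colimit. Second --- and this is the one genuine gap --- the \emph{initial} step of the back-and-forth is not, as you claim, ``exactly an instance of the amalgamation/absorption condition~(A)''. Condition~(A) absorbs a morphism \emph{out of} some $\mathrm Fn$, whereas producing $\phi_0$ with $\phi_0\circ a_0$ agreeing with $b_0$ up to a bonding map requires first amalgamating the two arrows $a_0,b_0\colon A\to\mathrm FN$ inside $\mathbf F_{\mathcal M}$ and \emph{then} absorbing the amalgam via~(A). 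The paper itself relies on AP in $\mathbf F_{\mathcal M}$ at the analogous point later (see the proof of Theorem~\ref{THM_SqueezeDirUltrahom}, where the prepended sequence is declared Fra\"iss\'e ``due to the amalgamation property''); you should make this use of AP explicit rather than fold it into~(A).
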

 \begin{proof}
     Fix a colimit cocone $\seq{f_n\colon \mathrm{F}n\to F}_{n\in\omega}$ in $\mathbf{C}$ and let $g,h\colon A\to F$ in $\mM$ be given with $A$ in $\mathbf{F}$. Since $A$ is weakly finitely small w.r.t. $(\mM, \mathbf{F})$, there must be $n_g,n_h\in\omega$ together with $\tilde{g}\colon A\to \mathrm{F}n_g$ and $\tilde{h}\colon A\to \mathrm{F}n_h$ in $\mathbf{F}_\mM$ such that $g = f_{n_g}$ and $h = f_{n_h}$. Without loss of generality, let us assume that $n_g=n_h=N$. 

     Consider the sequences $\mathrm{S}\colon \omega\to \mathrm{F}_\mM$ such that 
     \begin{itemize}
         \item $\mathrm{S}0 = \mathrm{S'}0 = A$ and \[\mathrm{S}n = \mathrm{S'}n = \mathrm{F}(N+n-1)\] for every $n\ge 1$.
         \item $\mathrm{S}_0^1=\tilde{g}\colon A\to \mathrm{F}N$, $\mathrm{S'}_0^1=\tilde{h}\colon A\to \mathrm{F}N$ and \[\mathrm{S}_n^{n+1} = \mathrm{S'}_n^{n+1} = \mathrm{F}_{N+n-1}^{N+n}\colon \mathrm{F}(N+n-1)\to \mathrm{F}(N+n)\] for every $n\ge 1$.
     \end{itemize}
     Then the cocones $\seq{g_n\colon \mathrm{S}n\to F}_{n\in\omega}$ and $\seq{h_n\colon \mathrm{S'}n\to F}_{n\in\omega}$ in $\mathbf{C}$ with $g_0=g\colon A\to F$, $h_0=h\colon A\to F$ and \[g_n = h_n = f_{N+n-1}\colon \mathrm{F}(N+n-1)\to F\] for every $n\ge 1$, are colimit cocones in $\mathbf{C}$ (since, from $n=1$ onward, they are equal to a final segment of the fixed colimit cocone of $\mathrm{F}$ in $\mathbf{C}$). In this case, the universal property of $\seq{g_n\colon \mathrm{S}n\to F}_{n\in\omega}$ and $\seq{h_n\colon \mathrm{S'}n\to F}_{n\in\omega}$ applied to one another provides us with the desired isomorphism $\varphi\colon F\to F$ in $\mathbf{C}$.
 \end{proof}

But, in view of Example \ref{EX_notFinRep}, not every finite game is weakly finitely small  w.r.t. $(\emb, \FinGame)$. It is then only natural to ask which are these weakly finitely small games  w.r.t. $(\emb, \FinGame)$ and what kind of Fra\"iss\'e theory emerges from them. 

Instead of starting by tackling this matter, though, we will first focus on the (harder) problem of which games are weakly finitely small in $\Games_\A$, so that the answer relating to $(\emb, \FinGame)$ will follow from it (see Theorem \ref{THM_WeakFinSmallGames_Emb}).

Our goal now is to show:

\begin{thm}\label{THM_WeakFinSmallGames}
	A game $G=(T,A)$ is weakly finitely small in $\Games_{\A}$ if, and only if, $G$ is a finite game that is trivial for $\ali$, that is,  $A=\Run(T)$.
\end{thm}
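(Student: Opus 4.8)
The plan is to prove both implications directly, using the explicit description of colimits of $\omega$-sequences in $\Games_\A$ furnished by Lemma~\ref{LEMMA_GameASeqLim}.

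For the ``if'' direction, suppose $G=(T,A)$ is finite with $A=\Run(T)$. First I would show such a $G$ is weakly finitely small. Let $\mathrm S\colon\omega\to\FinGame_\emb$ be any sequence with colimit cocone $q_n\colon \mathrm Sn\to G_{\mathrm S}$ as in Lemma~\ref{LEMMA_GameASeqLim}, and let $f\colon G\embed G_{\mathrm S}$ be an embedding. Since $T$ is finite, $T$ has finitely many moments; for each moment $t\in T$, $f(t)\in T_{\mathrm S}$ is of the form $\seq{[s\restrict i]:0<i\le|s|}$ for some $s\in\coprod_{n<\omega}\mathrm Sn$, say $s\in \mathrm Sn_t$. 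Taking $N$ to be the maximum of the finitely many $n_t$ (and using that each $\mathrm Sn$ embeds into $\mathrm SN$ via the sequence maps, together with the compatibility of the cocone with $\mathrm S$), every $f(t)$ lies in the image of $q_N$; using injectivity of $q_N$ on moments we obtain a chronological $\tilde f\colon T\to T_N$ with $q_N\circ\tilde f=f$ as chronological maps. The key point is that $\tilde f$ is automatically a game embedding precisely because $A=\Run(T)$: there is no $\B$-condition to check beyond what is forced, and the $\A$-condition on $\tilde f$ is inherited from that of $f=q_N\circ\tilde f$ together with the fact that $q_N$ reflects the payoff set on the image of $\tilde f$ (here $\overline{\tilde f}$ is determined on $\Run(T)$ by the finitely many $\tilde f(t)$, since $G$ is finite and a finite game's runs are determined by a finite truncation). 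Hence $\tilde f\in\FinGame_\emb$ and $G$ is weakly finitely small.

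For the ``only if'' direction I would argue by contraposition, producing for each $G=(T,A)$ that is \emph{not} finite-and-trivial-for-$\ali$ a sequence $\mathrm S\colon\omega\to\FinGame_\emb$, a colimit cocone $(q_n)$ in $\Games_\A$, and an embedding $f\colon G\to G_{\mathrm S}$ that factors through no $q_n$ in $\FinGame_\emb$. There are two cases. If $G$ is infinite (i.e.\ $\Run(T)$ is infinite), then $G$ cannot embed into any finite game at all, so \emph{a fortiori} no factorization through a finite $\mathrm Sn$ exists once we exhibit any sequence of finite games whose colimit receives an embedding of $G$ — and Lemma~\ref{LEMMA_GFL_Age} essentially gives us such data by writing $G$ as an increasing union of its finite subgames mapped into $G_{\FL}$, realising $G_{\FL}$ (or a suitable variant) as the colimit of a sequence of finite games via Theorem~\ref{THM_FraisseSeq} / Lemma~\ref{LEMMA_GameASeqLim}. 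If instead $G$ is finite but has a run $R_0\notin A$ won by $\bob$, then I would adapt Example~\ref{EX_notFinRep}: take the sequence $\mathrm S$ with $\mathrm Sn=G_n$ (the finite games from that example) whose colimit in $\Games_\A$ is $G_{\FL}=(\bigcup_n\omega^n,c_{00}(\omega))$, and note that the subgame $(T',\emptyset)\le G_{\FL}$ with $T'=\set{\seq{1:i<k}:k<\omega}$ is won entirely by $\bob$; using this, realise $G$ (which has the $\bob$-won run $R_0$) as a subgame of $G_{\FL}$ via an embedding $f$ sending the branch through $R_0$ into the all-$\bob$ part, and observe that no $G_n$ admits a $\B$-compatible image of $R_0$ since $\ali$ wins every run of every $G_n$. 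Thus $f$ factors through no $q_n$ as a game embedding.

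The main obstacle I anticipate is the bookkeeping in the ``only if'' finite-but-nontrivial case: one must build the embedding $f\colon G\to G_{\mathrm S}$ so that the offending $\bob$-won run of $G$ genuinely lands in the part of $G_{\mathrm S}$ that is ``new'' at the limit (not hit by any $\overline{q_n}$ on a $\bob$-won run), which is what obstructs the factorization; getting a clean, self-contained such $f$ may require either invoking the Proposition immediately preceding this theorem (which characterizes when a sequence in $\FinGame_\emb$ has a colimit in $\Games_\emb$, i.e.\ when every branch of the limit is hit) to locate the obstruction, or re-running the recursive construction of Lemma~\ref{LEMMA_GFL_Age} with a target branch prescribed. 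Everything else — the finiteness reductions, the behavior of $\overline{f}$ on runs of a finite game, and the verification that $q_N$ reflects payoffs on images — is routine given the colimit formula of Lemma~\ref{LEMMA_GameASeqLim}.
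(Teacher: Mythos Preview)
Your proposal contains a genuine gap in the ``if'' direction, stemming from a misreading of what a \emph{finite game} is. You write ``Since $T$ is finite, $T$ has finitely many moments'', but this is false: by Definition~\ref{DEF_Game}(ii) every moment in a game tree extends, so $T$ is always an infinite set. A game is \emph{finite} when $\Run(T)$ is finite (Definition~\ref{DEF_FinGames}), not when $T$ is. Hence your plan of lifting the finitely many moments of $T$ individually and taking the maximum of the $n_t$'s cannot work --- there are infinitely many moments to lift. The paper's proof instead lifts the finitely many \emph{runs} $R_0,\dotsc,R_N\in\Run(T)=A$, and the real difficulty (handled by Lemma~\ref{LEMMA_SeqColimR1R2}) is to find preimages $S_0,\dotsc,S_N$ in a single $\mathrm{S}M$ whose pairwise $\Delta$-values match those of the $R_i$; only then do these preimages assemble into a chronological map on all of $T$.

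There is also a scope issue: the theorem is about weak finite smallness \emph{in $\Games_\A$}, meaning (per Definition~\ref{DEFN_WeaklyFinSmall}) the test sequences $\mathrm{S}$ range over all of $\Games_\A$ and the test morphism $f$ is any $\A$-morphism, not just sequences in $\FinGame_\emb$ and embeddings $f$. Your ``if'' direction only quantifies over the latter, so even if the moment-lifting argument were repaired you would be proving the weaker Theorem~\ref{THM_WeakFinSmallGames_Emb} rather than the present one. Your ``only if'' direction is less affected by this (one witnessing sequence suffices to refute smallness), and your two cases are broadly in the spirit of Propositions~\ref{PROP_InfiniteNotSmall} and~\ref{PROP_NonTrivialNotSmall}; but note that for infinite $G$ the paper avoids any embedding of $G$ into a limit of finite games (which would require extra countability hypotheses) by simply exhausting $G$ itself and testing against $\id_G$.
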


In order to do this, given a non-empty game tree $T$ and $t\in T$, let 
\begin{gather*}
    \overline{t} = \set{R\in \Run(T): R\restrict |t| = t},\\
    T(t) = \set{s\in T: s\subseteq t \text{ or } t\subseteq s}.
\end{gather*}

If $\Run(T)$ is infinite, one can recursively find a family $\set{t_n\in T:n\in\omega}$ such that $\set{\overline{t_n}:n\in\omega}$ partitions $\Run(T)$ with $\overline{t_n}\neq\emptyset$ for every $n\in\omega$. In this case:

\begin{prop}\label{PROP_InfiniteNotSmall}
    If $G = (T,A)$ is such that $\Run(T)$ is infinite, then $G$ is not weakly finitely small in $\Games_\A$.
\end{prop}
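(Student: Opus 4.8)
The plan is to construct, from the partition $\{\overline{t_n}:n\in\omega\}$ of $\Run(T)$ given above, a sequence $\mathrm{S}\colon\omega\to\FinGame_\emb$ whose colimit in $\Games_\A$ is (isomorphic to) $G$ via a colimit cocone $\seq{q_n\colon \mathrm{S}n\to G}$, but such that the identity $\mathrm{id}\colon G\to \colim\mathrm{S}=G$ — which is certainly an $\A$-morphism — does not factor through any $q_n$ followed by an $\A$-morphism $G\to\mathrm{S}n$. Since the definition of weak finite smallness in $\Games_\A$ (with $\bF_\mM=\bC_\mM=\bC$) asks that \emph{every} $f\colon G\to\colim\mathrm{S}$ factor through some stage, exhibiting one such sequence and one such non-factoring morphism suffices.

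First I would describe the stages. Using the family $\set{t_n\in T:n\in\omega}$ with $\set{\overline{t_n}:n\in\omega}$ a partition of $\Run(T)$ into nonempty pieces, set $\mathrm{S}n$ to be the finite game obtained from $G$ by ``freezing'' all of $T$ outside $\bigcup_{i\le n}T(t_i)$: concretely, keep the moments in $\bigcup_{i\le n} T(t_i)$, and for each $t_j$ with $j>n$ replace the subtree $\overline{t_j}$ above (the appropriate truncation of) $t_j$ by a single constant branch carrying the payoff value (won by $\ali$ iff the corresponding run of $G$ was in $A$ — one must first arrange the partition finely enough that each $\overline{t_j}$ is homogeneous in payoff, which is possible by refining the $t_n$'s, or alternatively split into an $A$-part and a non-$A$-part). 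Each $\mathrm{S}n$ is finite since it has finitely many runs, and there are obvious game embeddings $\mathrm{S}n\embed\mathrm{S}(n+1)$ (adding the moments in $T(t_{n+1})$ back, unfolding that one frozen branch). The maps $q_n\colon\mathrm{S}n\to G$ are the inclusions $\bigcup_{i\le n}T(t_i)\hookrightarrow T$ together with, on each still-frozen branch, the chronological collapse onto the constant tail of the unique run through $t_j$; these are $\A$-morphisms, they commute with the bonding maps, and one checks — exactly as in Lemma \ref{LEMMA_GameASeqLim}, using the explicit coequalizer description — that $\seq{q_n}$ is the colimit cocone, the point being that every moment and every run of $G$ is ``reached'' at some finite stage since the $\overline{t_n}$ cover $\Run(T)$.

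Next I would show $\mathrm{id}_G$ does not factor. Suppose $\tilde f\colon G\to\mathrm{S}n$ in $\Games_\A$ with $q_n\circ\tilde f=\mathrm{id}_G$. Pick $j>n$; since $\overline{t_j}\neq\emptyset$, choose a run $R\in\overline{t_j}$ of $G$. Then $\overline{\tilde f}(R)$ is a run of $\mathrm{S}n$; but above stage $n$ the moment $t_j\restrict(\text{its frozen level})$ has been collapsed, so $\overline{\tilde f}(R)$ is forced — by chronology and $q_n\circ\tilde f=\mathrm{id}$ at every finite truncation — to be the unique constant tail over that frozen branch, while $\overline{q_n}$ of \emph{that} run is the single constant run of $G$ through $t_j$, not $R$ itself (as soon as $\overline{t_j}$ has more than one element, or $R$ disagrees with the chosen representative, $q_n\circ\tilde f$ fails to be the identity on $R$). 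This contradiction — together with the remark that one may always refine the partition so that some $\overline{t_j}$, $j$ arbitrarily large, is nontrivial, which holds because $\Run(T)$ is infinite hence not covered by finitely many singletons — completes the argument.

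The main obstacle I anticipate is purely bookkeeping: making the ``frozen'' games genuinely finite games in the sense of Definition \ref{DEF_FinGames} (game trees must have every node extendable, so a frozen branch is literally an infinite constant tail $t_j^\smallfrown\seq{y:i<\omega}$, and one must verify $\Run$ of the result is finite and that the relevant maps are honest chronological $\A$-morphisms preserving the payoff condition in the right direction), and checking that $\seq{q_n}$ is actually the colimit rather than merely a cocone — here I would lean on the explicit equivalence-relation computation of Lemma \ref{LEMMA_GameASeqLim} rather than redo it. The genuinely mathematical content is light; the delicacy is in choosing the partition $\set{t_n}$ so that infinitely many $\overline{t_n}$ are available to host a ``bad'' run at every level beyond $n$, which is exactly what infiniteness of $\Run(T)$ buys us.
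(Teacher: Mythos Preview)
Your overall strategy---build a sequence with colimit $G$ and show that $\id_G$ does not factor through any stage---is exactly the paper's. But the execution you propose is both unnecessarily complicated and, as written, does not achieve what you claim.

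First, the complication is unnecessary. You correctly note that ``weakly finitely small in $\Games_\A$'' means $\bF_\mM=\bC_\mM=\bC=\Games_\A$, yet you then insist on a sequence $\mathrm{S}\colon\omega\to\FinGame_{\emb}$. There is no such requirement: the sequence may consist of arbitrary games. The paper simply takes $\mathrm{S}n=(T_n,A_n)$ with $T_n=\bigcup_{k\le n}T(t_k)$ and $A_n=A\cap\Run(T_n)$, with all bonding maps and cocone maps $i_n\colon\mathrm{S}n\hookrightarrow G$ being plain inclusions. No freezing, no collapsing, no representative runs to choose. The non-factoring of $\id_G$ is then a one-liner at the level of \emph{moments}, not runs: $i_n[T_n]=T_n\subsetneq T$ because $\overline{t_{n+1}}\neq\emptyset$ provides moments outside $T_n$, so $i_n\circ f$ can never be the identity on $T$.

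Second, your frozen-branch construction does not actually produce finite games. For each $j>n$ you add one frozen branch, and there are infinitely many such $j$; moreover, each already-unfrozen piece $\overline{t_k}$ for $k\le n$ may itself contain infinitely many runs. So $\Run(\mathrm{S}n)$ is infinite. You also speak of ``the unique run through $t_j$'' when defining $q_n$, but $\overline{t_j}$ need not be a singleton. These issues are all fixable (pick representatives, drop the finiteness claim), but once you drop the finiteness claim the entire freezing apparatus serves no purpose and you are back to the paper's much cleaner construction.
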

\begin{proof}
    Fix a family $\set{t_n\in T:n\in\omega}$ such that $\set{\overline{t_n}:n\in\omega}$ partitions $\Run(T)$ with $\overline{t_n}\neq\emptyset$ for every $n\in\omega$. We let $\mathrm{S}\colon \omega\to \Games_\A$ be such that 
    \begin{itemize}
        \item $\mathrm{S}n = (T_n,A_n)$ with $T_n = \bigcup_{k\le n}T(t_k)$ and $A_n = A\cap \Run(T_n)$;
        \item $\mathrm{S}_n^m\colon G_n\embed G_m$ is the inclusion map.
    \end{itemize}
    Then $\seq{i_n\colon \mathrm{S}n\embed G}_{n\in\omega}$, where $i_n\colon \mathrm{S}n\to G$ is also the inclusion map, is clearly the colimit cocone of $\mathrm{S}$ in $\Games_\A$.

    Now consider $\id_G\colon G\to G$. Then there can be no $n\in\omega$ and $f\colon G\to \mathrm{S}n$ such that $i_n\circ f = \id_G$, since $\id_G[T] = T$, but $i_n[T_n] = \bigcup_{k\le n}T(t_k)$ and $T\setminus \bigcup_{k\le n}T(t_k)\neq\emptyset$ because $\overline{t_{n+1}}\neq\emptyset$ for every $n\in\omega$.

    Hence, $G$ is not weakly finitely small in $\Games_\A$.
\end{proof}

For a non-empty game tree $T$ and $t\in T$, let

\[
    \lfloor t\rfloor = \set{s\in T: t\subsetneq s}.
\]

\begin{prop}\label{PROP_NonTrivialNotSmall}
    If $G = (T,A)$ is such that $\Run(T)\setminus A\neq\emptyset$, then $G$ is not weakly finitely small in $\Games_\A$.
\end{prop}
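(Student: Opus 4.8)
The plan is to produce, for a fixed run $R\in\Run(T)\setminus A$ won by $\bob$, a single sequence $\mathrm{S}\colon\omega\to\Games_\A$ together with an $\A$-morphism $f\colon G\to\colim\mathrm{S}$ that factors through no stage of the colimit cocone. The obstruction is the run-level analogue of the one used in Proposition~\ref{PROP_InfiniteNotSmall}: if $(q_n\colon\mathrm{S}n\to\colim\mathrm{S})_{n<\omega}$ is the colimit cocone and $q_n\circ\tilde f=f$ for some $\tilde f\colon G\to\mathrm{S}n$, then passing to runs yields $\overline{q_n}\bigl(\overline{\tilde f}(R)\bigr)=\overline f(R)$; so it suffices to arrange that $\overline f(R)$ is a run of $\colim\mathrm{S}$ lying outside the image of every $\overline{q_n}$. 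In other words, I want to realise $G$ inside a colimit in such a way that (the image of) the $\bob$-won run $R$ ``appears only in the limit''.

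To this end I would first enlarge $T$ to a game tree $T_\infty\supseteq T$ by grafting onto each moment $R\restrict k$ a fresh infinite branch: choosing pairwise distinct symbols $\star_k\notin\M(T)$, set
\[
T_\infty \;=\; T\cup\bigcup_{k<\omega}\bigl\{(R\restrict k)^\smallfrown\seq{\star_k:i<j}\;:\;1\le j<\omega\bigr\},
\]
and write $B_k=(R\restrict k)^\smallfrown\seq{\star_k:i<\omega}$ for the grafted branches. It is immediate that $T_\infty$ is a game tree and that $\Run(T_\infty)=\Run(T)\sqcup\{B_k:k<\omega\}$. The point of this enlargement is that in $T_\infty$ the branch $R$ is no longer isolated — each moment $R\restrict k$ now also lies on $B_k$ — which is precisely what will let us delete $R$ at a finite stage while still exhausting all of its moments. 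This step is genuinely needed: when $R$ is an isolated branch of $T$ itself (e.g.\ when $T$ is a single branch), no sequence built from sub-games of $G$ alone can witness non-smallness, since $\overline f(R)$ would then be forced into the image of some $\overline{q_n}$.

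Next I would set, for each $n<\omega$,
\[
T_n \;=\; T_\infty\setminus\Bigl(\lfloor R\restrict n\rfloor\setminus\bigl\{(R\restrict n)^\smallfrown\seq{\star_n:i<j}:1\le j<\omega\bigr\}\Bigr),\qquad \mathrm{S}n=\bigl(T_n,\;A\cap\Run(T_n)\bigr),
\]
where $\lfloor R\restrict n\rfloor$ is computed in $T_\infty$ and all $\mathrm{S}_n^m$ are inclusions. The routine verifications are: each $T_n$ is a game subtree of $T_\infty$ (here one must keep the grafted branch $B_n$ so that $R\restrict n$ retains a successor), the $T_n$ are increasing with $\bigcup_n T_n=T_\infty$, and
\[
\Run(T_n)=\{P\in\Run(T_\infty):P\restrict n\neq R\restrict n\}\cup\{B_n\};
\]
in particular $R\notin\Run(T_n)$ for every $n$, whereas $\bigcup_n\Run(T_n)=\Run(T_\infty)\setminus\{R\}$. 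Since $R\notin A$, the sets $A\cap\Run(T_n)$ are increasing with union $A$, and each $\mathrm{S}_n^m$ is an $\A$-morphism (indeed a game embedding); so by Lemma~\ref{LEMMA_GameASeqLim} the colimit of $\mathrm{S}$ is $G_\infty=(T_\infty,A)$ with colimit cocone the inclusions $q_n\colon\mathrm{S}n\hookrightarrow G_\infty$.

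Finally, the inclusion $f\colon G=(T,A)\hookrightarrow(T_\infty,A)=G_\infty$ is a game embedding, hence an $\A$-morphism, and $\overline f(R)=R$ viewed as a run of $T_\infty$. If $f=q_n\circ\tilde f$ for some $\tilde f\colon G\to\mathrm{S}n$ in $\Games_\A$, then — using that $\overline{q_n}$ is the inclusion $\Run(T_n)\hookrightarrow\Run(T_\infty)$ — the mechanism above gives $R=\overline{\tilde f}(R)\in\Run(T_n)$, contradicting $R\notin\Run(T_n)$. Hence $f$ factors through no $q_n$, and $G$ is not weakly finitely small in $\Games_\A$. The only mildly delicate parts of the argument are checking that $T_n$ is a game tree and computing $\Run(T_n)$; the conceptual content lies entirely in the blow-up from $T$ to $T_\infty$, which turns ``$R$ is isolated in $T$'' into ``$R$ is a genuinely new branch of the colimit''.
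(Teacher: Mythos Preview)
Your proof is correct and follows essentially the same approach as the paper's: fix $R\in\Run(T)\setminus A$, at stage $n$ delete everything strictly above $R\restrict n$ and graft fresh branches at the moments $R\restrict k$ for $k\le n$, so that the colimit is the enlarged tree with payoff $A$ and the inclusion of $G$ witnesses non-smallness because $R$ appears in no finite stage. The only cosmetic differences are that you use pairwise distinct symbols $\star_k$ rather than a single $b\notin\M(T)$, and you present the construction by first building $T_\infty$ and then carving out the $T_n$, whereas the paper defines the $T_n$ directly; neither changes the argument.
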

\begin{proof}
     Fix $R\in \Run(T)\setminus A$ and $b$ such that $b\notin \M(T)$. Then we let $\mathrm{S}\colon \omega \to \Games_\A$ be such that
     \begin{itemize}
        \item $\mathrm{S}n = (T_n,A_n)$, where
         \begin{gather*}
             T_n = T\setminus \lfloor R\restrict n\rfloor\cup\set{R\restrict k^\smallfrown \seq{b:i\le m}:k\le n, \,m\in\omega}\\
             A_n = A\cap \Run(T_n).
         \end{gather*}
         \item $\mathrm{S}_n^m\colon T_n\embed T_m$ is the inclusion map.
     \end{itemize}
    Let $i_n: \mathrm{S}n\embed (T',A))$ be the inclusion map for each $n$, where
    \[T' = T\setminus \cup\set{R\restrict k^\smallfrown \seq{b:i\le m}:k,m\in\omega}.\]
    Then the sequence of  maps $\seq{i_n}_{n\in\omega}$ is clearly the colimit cocone in $\Games_\A$ of $\mathrm{S}$. 
    
    Now consider the inclusion map $i\colon G\embed G'$. Then there can be no $n\in\omega$ with $f\colon G\to \mathrm{S}n$ such that $i_n\circ f = i$, since $R\in \overline{i}[\Run(T)]$, but $R\notin \overline{i_n}[\Run(T_n)]$ for every $n\in\omega$. Hence, $G$ is not weakly finitely small in $\Games_\A$.
\end{proof}

To finish the proof of Theorem \ref{THM_WeakFinSmallGames}, it remains to show that every finite game which is trivial for $\ali$ is weakly finitely small in $\Games_\A$. To this end, we need the following technical lemma.

	\begin{lemma}\label{LEMMA_SeqColimR1R2}
		Let  $\mathrm{S}\colon\omega \to \Games_{\A}$ be a sequence with $\mathrm{S}n=(T_n,A_n)$, colimit cocone  $\set{q_k\colon \mathrm{S}k \to G_\mathrm{S}:k\in\omega}$. Suppose $x\in A_k$ and $R\in A_\mathrm{S}$ are such that $\overline{q}_k(x)\neq R$. 
		
		Then there are $M\ge k$ and $y\in A_M$ such that $\overline{q}_M(y)=R$ and
		\[
		\Delta(y,\overline{\mathrm{S}_k^M}(x))=\Delta(R,\overline{q}_k(x)).
		\]
	\end{lemma}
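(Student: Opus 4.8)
The plan is to pull $R$ back to an $\ali$-run living at some finite stage of the sequence, and then advance $x$ far enough along $\mathrm{S}$ so that its image and the corresponding translate of that run first diverge precisely at coordinate $d:=\Delta(R,\overline{q}_k(x))$.

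First I would unwind the explicit description of the colimit given in Lemma~\ref{LEMMA_GameASeqLim}. Since $R\in A_{\mathrm{S}}$, the definition of $A_{\mathrm{S}}$ supplies an $n_0<\omega$ and an $R_0\in A_{n_0}$ with $R=\seq{[R_0\restrict i]:i\in\omega\setminus\{0\}}$, i.e.\ $R=\overline{q}_{n_0}(R_0)$. Put $d=\Delta(R,\overline{q}_k(x))$, which is defined because $R\neq\overline{q}_k(x)$. Reading off the explicit formulas for $q_{n_0}$ and $q_k$ (cf.\ \eqref{EQ_GamesA_cocone}), the equality $R\restrict d=\overline{q}_k(x)\restrict d$ translates into $R_0\restrict i\sim x\restrict i$ for every $i\le d$, while the inequality $R(d)\neq\overline{q}_k(x)(d)$ translates into $R_0\restrict(d+1)\not\sim x\restrict(d+1)$.

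Next, using $R_0\restrict d\sim x\restrict d$, I would pick $M\ge\max(k,n_0)$ with $\mathrm{S}_{n_0}^M(R_0\restrict d)=\mathrm{S}_k^M(x\restrict d)$, and set $y=\overline{\mathrm{S}_{n_0}^M}(R_0)$. Then $y\in A_M$ because $\mathrm{S}_{n_0}^M$ is an $\A$-morphism and $R_0\in A_{n_0}$, and $\overline{q}_M(y)=\overline{q}_{n_0}(R_0)=R$ by the cocone identity $q_M\circ\mathrm{S}_{n_0}^M=q_{n_0}$. To evaluate $\Delta(y,\overline{\mathrm{S}_k^M}(x))$: chronology of the transition maps together with the choice of $M$ gives $y\restrict d=\mathrm{S}_{n_0}^M(R_0\restrict d)=\mathrm{S}_k^M(x\restrict d)=\overline{\mathrm{S}_k^M}(x)\restrict d$, so the two runs agree strictly below $d$; and $R_0\restrict(d+1)\not\sim x\restrict(d+1)$ forces $\mathrm{S}_{n_0}^M(R_0\restrict(d+1))\neq\mathrm{S}_k^M(x\restrict(d+1))$, which, combined with the equality of these two length-$(d+1)$ moments on their first $d$ coordinates, forces a disagreement at coordinate $d$ itself. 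Hence $\Delta(y,\overline{\mathrm{S}_k^M}(x))=d=\Delta(R,\overline{q}_k(x))$, as required.

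I do not expect a genuine conceptual obstacle here: the whole argument is driven by the explicit colimit formula of Lemma~\ref{LEMMA_GameASeqLim}. The one place that needs care is the bookkeeping — the off-by-one shift between the length of a truncated moment and the index of the corresponding coordinate of a run, and the equivalence of ``$R_0\restrict i\sim x\restrict i$'' with ``$\mathrm{S}_{n_0}^M(R_0\restrict i)=\mathrm{S}_k^M(x\restrict i)$ for some (hence all larger) $M$'' — but this is routine once the definitions are written out.
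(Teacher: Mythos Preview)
Your proposal is correct and follows essentially the same approach as the paper: pull $R$ back to a run $R_0$ at some finite stage via the explicit colimit description of Lemma~\ref{LEMMA_GameASeqLim}, translate agreement/disagreement of $R$ and $\overline{q}_k(x)$ at coordinate $d$ into the $\sim$-relation on truncations, then pass to a stage $M$ where the relevant truncations coincide and read off $\Delta$. The only cosmetic difference is that the paper first pushes $x$ forward to the stage where $R'$ lives and works with both runs at a common index, whereas you keep them at separate stages and work directly with $\sim$; your bookkeeping with $d$ rather than $l=d-1$ is in fact slightly cleaner.
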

	\begin{proof}
		By the definition of $A_\mathrm{S}$, there are $m\ge k$ and $R'\in A_m$ such that $\overline{q}_m(R')=R$. Let $S=\overline{\mathrm{S}_k^m}(x)$ and then $l=\Delta(R,\overline{q}_m(S))-1$. Then, by definition of $\Delta(R,\overline{q}_m(S))$, $R\restrict l = \overline{q}_m(S)\restrict l$.  
		
		In this case, because $q_m$ is chronological, 
		\[
		\Delta(R',S)\le \Delta(\overline{q}_m(R'),\overline{q}_m(S))=\Delta(R,\overline{q}_k(x)).
		\]
		
		But $q_m(R'\restrict l)=R\restrict l=\overline{q}_k(x)\restrict l=\overline{q}_m(S)$, so $R'\restrict l\sim S\restrict l$ and thus there is an $M\ge m$ such that $\mathrm{S}(m\le M)(R'\restrict l)=\mathrm{S}_m^M(S\restrict l)=\mathrm{S}_k^M(x\restrict l)$. 
		
		Finally, by letting $y=\overline{\mathrm{S}_m^M}(R')$, we get that
		\[
		\Delta(y,\overline{\mathrm{S}_k^M}(x))\ge l+1 = \Delta(R,\overline{q}_k(x)),
		\]
		so that the conclusion follows from the fact that $q_M\colon \mathrm{S}M\to  G_{\mathrm{S}}$ is chronological.
	\end{proof}

At last:

\begin{myproof}{Theorem}{THM_WeakFinSmallGames}
	Let  $\mathrm{S}\colon\omega \to \Games_{\A}$ be a sequence with $\mathrm{S}n=(T_n,A_n)$. Consider its colimit cocone $\set{q_k\colon \mathrm{S}k \to G_{\mathrm{S}}:k\in\omega}$ as in Lemma \ref{LEMMA_GameASeqLim}. 
	
	Now, in order to show that $G$ is weakly finitely small, we can assume that $G\le G_{\mathrm{S}}$ and it suffices to show that the inclusion $i\colon G  \embed G_{\mathrm{S}}$ factors through some $q_M$. So enumerate $\Run(T)=\set{R_i:i\le N}\subset A_{\mathrm{S}}$. 
	
	For each $i\le N$ we will recursively find $m_i\in \omega$ and $\set{S_j^i:j\le i}$ such that $\overline{q_{m_i}}(S_j^i)=R_j$ and $\Delta(S_{j_1}^i,S_{j_2}^i)=\Delta(R_{j_1},R_{j_2})$. For $i=0$ we may simply use the definition of $A_{\mathrm{S}}$ to find an $S_0^0$ in some $A_{m_0}$ with $\overline{q_{m_0}}(S_0^0)=R_0$.
	
	Suppose the work is done for $i< N$. Then we may use Lemma \ref{LEMMA_SeqColimR1R2} to find $k_0\ge m_i$ and $y_0\in A_{k_0}$ such that $\overline{q}_{k_0}(y_0)=R_{i+1}$ and
	\[
	\Delta(y_0,\overline{\mathrm{S}(m_i\le k_0)}(R_0^i))=\Delta(R_{i+1},R_0).
	\]
	
	We then may use Lemma \ref{LEMMA_SeqColimR1R2} again to find $k_1\ge k_0$ and $y_1\in A_{k_1}$ such that $\overline{q}_{k_1}(y_1)=R_{i+1}$ and
	\[
	\Delta(y_1,\overline{\mathrm{S}(k_0\le k_1)}(R_1^i))=\Delta(R_{i+1},R_1).
	\]
	
	Note that
	\[
	\Delta(y_1,\overline{\mathrm{S}(m_i\le k_1)}(R_0^i))=\Delta(R_{i+1},R_0).
	\]
	
	By repeating this process $i$ times we then get the desired $m_i\in \omega$ and $\set{S_j^i:j\le i}$.
	
	Let $M=m_N$. It is then easy to check that the mapping $\overline{f}\colon \Run(T)\to A_{\mathrm{S}}$ such that
	\[
	\overline{f}(R_i)=S_i^N
	\]
	defines an $\A$-morphism $f\colon G\to \mathrm{S}M$ such that $q_M  f$ is the identity over $G$, which concludes the proof.
	
 The proof is then concluded with Propositions \ref{PROP_InfiniteNotSmall} and \ref{PROP_NonTrivialNotSmall}.
\end{myproof}

As previously mentioned, we are particularly interested in determining which are the weakly finitely small games w.r.t. $(\emb, \FinGame)$. However, note that we, purposefully, only used game embeddings in the proof of Proposition \ref{PROP_NonTrivialNotSmall}, so the same proof works to show that
\begin{prop}
    If $G = (T,A)$ is such that $\Run(T)\setminus A\neq\emptyset$, then $G$ is not weakly finitely small w.r.t. $(\emb, \FinGame)$.
\end{prop}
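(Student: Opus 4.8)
The plan is to observe that the proof of Proposition \ref{PROP_NonTrivialNotSmall} already does all the work, so the only task is to verify that it survives the restriction to the smaller category $\FinGame_\emb$ in the role of $\mathbf{F}_\mM$. Recall the definition of weak finite smallness w.r.t. $(\emb,\FinGame)$: we must exhibit a sequence $\mathrm{S}\colon\omega\to\FinGame_\emb$ with colimit cocone $(q_n\colon \mathrm{S}n\to\colim\mathrm{S})$ \emph{in} $\Games_\A$ and a morphism $f\colon G\to\colim\mathrm{S}$ in $\emb$ for which no factorization $f=q_n\circ\tilde f$ with $\tilde f\colon G\to\mathrm{S}n$ in $\FinGame_\emb$ exists. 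So first I would note that, by hypothesis, $G=(T,A)$ with $\Run(T)\setminus A\neq\emptyset$; but a game which is weakly finitely small w.r.t. $(\emb,\FinGame)$ is in particular \emph{finite}, since otherwise the same witness as in Proposition \ref{PROP_InfiniteNotSmall} (whose maps are inclusions, hence embeddings) already refutes smallness. Thus we may assume $G$ is a \emph{finite} game which is not trivial for $\ali$.

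Next I would reuse verbatim the sequence $\mathrm{S}$ constructed in the proof of Proposition \ref{PROP_NonTrivialNotSmall}: fix $R\in\Run(T)\setminus A$ and a fresh symbol $b\notin\M(T)$, set $T_n = \bigl(T\setminus\lfloor R\restrict n\rfloor\bigr)\cup\{R\restrict k^\smallfrown\seq{b:i\le m}:k\le n,\,m\in\omega\}$ and $A_n=A\cap\Run(T_n)$, with the connecting maps being inclusions. The key points to check are (i) each $\mathrm{S}n$ is again a \emph{finite} game — this holds because $T_n$ has the same finitely many ``genuine'' runs of $T$ except that the run $R$ has been replaced by finitely many ($n+1$ at most, one for each $k\le n$) eventually-constant-$b$ runs, so $\Run(T_n)$ is still finite; (ii) the connecting inclusions and the cocone maps $i_n$ are \emph{game embeddings}, not merely $\A$-morphisms — this is immediate since inclusions of subgames $G\le G'$ are always embeddings; and (iii) the colimit of $\mathrm{S}$ in $\Games_\A$ is the game $G'=(T',A)$ with $T'=T\setminus\{R\restrict k^\smallfrown\seq{b:i\le m}:k,m\in\omega\}$, which is exactly what Proposition \ref{PROP_NonTrivialNotSmall} establishes via Lemma \ref{LEMMA_GameASeqLim}; note we do \emph{not} need this colimit to lie in $\Games_\emb$, only in the ambient category $\Games_\A$, which is all the definition of weak finite smallness w.r.t. $(\emb,\FinGame)$ requires.

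Finally I would close the argument exactly as before: the inclusion $i\colon G\embed G'$ is a game embedding from $G$ into $\colim\mathrm{S}$, and it cannot factor as $i_n\circ f$ for any $n$ and any $\A$-morphism (a fortiori any embedding) $f\colon G\to\mathrm{S}n$, because $R\in\overline{i}[\Run(T)]$ while $R\notin\overline{i_n}[\Run(T_n)]$ for every $n$ (the run $R$ simply does not survive in $T_n$, having been truncated at level $n$ and redirected along the $b$-branches). Hence $G$ is not weakly finitely small w.r.t. $(\emb,\FinGame)$. The only subtlety — and it is genuinely mild — is making explicit that the restriction of the morphism class from all $\A$-morphisms to embeddings cannot \emph{help} the factorization exist: fewer available maps can only make factorizations harder to find, so a non-factorization in $\Games_\A$ is a fortiori a non-factorization through $\FinGame_\emb$; and the restriction of the ``finite-like'' subcategory is harmless because each $\mathrm{S}n$ was already finite. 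I expect no real obstacle here — the whole point of having written Proposition \ref{PROP_NonTrivialNotSmall} with embeddings is precisely to make this corollary a one-line observation.
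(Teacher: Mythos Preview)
Your approach is essentially the paper's: both observe that the witness built in Proposition \ref{PROP_NonTrivialNotSmall} uses only game embeddings, so it serves here as well. You are actually more careful than the paper in making explicit that each $\mathrm{S}n$ must land in $\FinGame$, and you correctly note that this requires $G$ to be finite.

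However, your reduction to finite $G$ is not quite right. You invoke the witness from Proposition \ref{PROP_InfiniteNotSmall}, but that sequence has $\mathrm{S}n=(\bigcup_{k\le n}T(t_k),\,A\cap\Run(T_n))$, and nothing forces these subgames to be finite when $G$ is infinite --- for instance $\overline{t_0}$ may itself contain infinitely many runs, so $\mathrm{S}0$ need not lie in $\FinGame$. Thus that witness does not refute weak finite smallness \emph{with respect to} $(\emb,\FinGame)$. The paper's one-line proof sidesteps this entirely by tacitly assuming $G$ is finite, which is the only case actually used downstream in Theorem \ref{THM_WeakFinSmallGames_Emb}. (In fact the proposition, as literally stated for arbitrary $G$, is false: a game with uncountable tree admits no embedding into the colimit of any sequence in $\FinGame_\emb$, since such colimits have countable trees, and is therefore \emph{vacuously} weakly finitely small.) So either restrict the statement to finite $G$ from the outset, or replace the reduction by the simpler observation that an infinite $G$ with countable tree embeds into $G_{\FL}$ (Lemma \ref{LEMMA_UH->WH}) but cannot factor through any object of $\FinGame$ via an embedding.
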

Finally:

\begin{thm}\label{THM_WeakFinSmallGames_Emb}
    A game $G=(T,A)$ is weakly finitely small in $\Games_{\emb}$ w.r.t. $(\emb, \FinGame)$ if, and only if, $G$ is a finite game that is trivial for $\ali$.
\end{thm}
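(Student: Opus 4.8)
The plan is to deduce this from Theorem \ref{THM_WeakFinSmallGames} together with the proposition immediately preceding it (the version of Proposition \ref{PROP_NonTrivialNotSmall} for $(\emb,\FinGame)$), reusing as much of the $\Games_\A$ argument as possible. For the "only if" direction: if $G=(T,A)$ is weakly finitely small w.r.t. $(\emb,\FinGame)$, then $\Run(T)\setminus A=\emptyset$ by the preceding proposition, so $G$ is trivial for $\ali$; and $\Run(T)$ must be finite, since the sequence $\mathrm{S}$ built in the proof of Proposition \ref{PROP_InfiniteNotSmall} consists of finite games whenever $G$ itself has finite game tree at each level — but one must be slightly careful here, because the $T_n=\bigcup_{k\le n}T(t_k)$ appearing there need not be \emph{finite} games. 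So instead I would argue directly: if $\Run(T)$ is infinite, fix the partition $\set{\overline{t_n}:n\in\omega}$ as before, and note that each $T(t_k)$ may fail to be a finite game, so we cannot literally invoke that proof. The fix is that finite weak smallness w.r.t. $(\emb,\FinGame)$ formally implies weak smallness tested against \emph{all} sequences in $\FinGame_\emb$ whose colimit is $G$; so it suffices to exhibit one such sequence. Since $G$ is trivial for $\ali$ and (as we are trying to show) must be finite, and a non-finite $G$ has no presentation as a colimit of finite games via embeddings along which $\id_G$ factors, the obstruction of Proposition \ref{PROP_InfiniteNotSmall} transfers: any colimit cocone $(q_n\colon \mathrm{S}n\embed G)$ with $\mathrm{S}n$ finite games has each $\overline{q_n}[\Run(\mathrm{S}n)]$ a finite subset of the infinite set $\Run(T)$, so no $q_n\circ f=\id_G$ is possible.

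For the "if" direction, suppose $G=(T,A)$ is a finite game trivial for $\ali$, i.e. $A=\Run(T)$ and $\Run(T)$ finite. I would take a sequence $\mathrm{S}\colon\omega\to\FinGame_\emb$ with colimit cocone $(q_n\colon \mathrm{S}n\embed\colim\mathrm{S})$ in $\Games_\emb$ (noting that such a colimit, when it exists, agrees with the colimit computed in $\Games_\A$ by Lemma \ref{LEMMA_GameASeqLim}, and by the Proposition on colimits in $\Games_\emb$ every run of $G_{\mathrm{S}}$ comes from some $\mathrm{S}n$), and a morphism $f\colon G\embed\colim\mathrm{S}$ in $\emb$. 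Then $f$ is in particular an $\A$-morphism into $G_{\mathrm{S}}$, so by Theorem \ref{THM_WeakFinSmallGames} there are $M<\omega$ and an $\A$-morphism $\tilde f\colon G\to\mathrm{S}M$ with $q_M\circ\tilde f=f$. It remains to check $\tilde f$ is actually a game embedding, i.e. injective and also a $\B$-morphism. Injectivity is immediate since $q_M\circ\tilde f=f$ is injective. For the $\B$-condition: $\Run(T)\setminus A=\emptyset$, so there is \emph{no} run of $G$ won by $\bob$, and the $\B$-morphism condition is vacuously satisfied by $\tilde f$. Hence $\tilde f\colon G\embed\mathrm{S}M$ lies in $\FinGame_\emb$ and witnesses weak finite smallness.

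The main obstacle I anticipate is purely bookkeeping rather than mathematical: making sure that "the colimit in $\Games_\emb$" that appears in Definition \ref{DEFN_WeaklyFinSmall} for $(\emb,\FinGame)$ really does coincide, as a game, with $G_{\mathrm{S}}$ from Lemma \ref{LEMMA_GameASeqLim}, so that Theorem \ref{THM_WeakFinSmallGames} can be applied verbatim. The Proposition characterizing when such colimits exist says exactly that: when a colimit in $\Games_\emb$ exists, it is $G_{\mathrm{S}}$ with the surjectivity-of-runs condition, and the colimit cocone is the $q_n$. So the subtlety is just to phrase the reduction correctly: weak finite smallness w.r.t. $(\emb,\FinGame)$ only quantifies over sequences $\mathrm{S}$ in $\FinGame_\emb$ that \emph{do} have a colimit in $\Games_\emb$, and for those the colimit is $G_{\mathrm{S}}$ and every run factors — which is precisely what is needed to feed into Theorem \ref{THM_WeakFinSmallGames} and to recover $\tilde f$ as a genuine embedding. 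Beyond that, the proof is a direct transcription of the $\Games_\A$ arguments, using that triviality for $\ali$ makes the $\B$-morphism requirement vacuous.
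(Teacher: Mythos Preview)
Your ``if'' direction is essentially identical to the paper's: invoke Theorem \ref{THM_WeakFinSmallGames} to obtain an $\A$-morphism $\tilde f$ with $q_M\circ\tilde f=f$, and then verify that $\tilde f$ is actually a game embedding. The paper phrases that last step as left cancellation (``since $f$ is a game embedding, so must $g$''), which works because the colimit injections $q_n$ for a sequence in $\FinGame_\emb$ are themselves embeddings; your argument via vacuity of the $\B$-condition is equally valid and just as short. Your extended worry about whether the colimit should be taken in $\Games_\emb$ or in $\Games_\A$ is unnecessary: the paper simply takes the colimit cocone in $\Games_\A$, and that is all the definition asks for here.

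For the ``only if'' direction you correctly observe that the sequence built in Proposition \ref{PROP_InfiniteNotSmall} need not consist of finite games, so that argument does not transfer verbatim to the $(\emb,\FinGame)$ setting. The paper in fact does not address this case at all---it records only the $\Run(T)\setminus A\neq\emptyset$ case in the proposition immediately preceding the theorem and then writes ``it remains to show'' the converse---so your noticing the gap is to your credit. Your suggested patch is on the right track but, as written, tacitly assumes that $G$ itself arises as the colimit of some sequence in $\FinGame_\emb$. To make it clean: once you already know $G$ is trivial for $\ali$ (from the preceding proposition) and $G$ is countable, take an exhaustion of $G$ by finite subgames; this is a sequence in $\FinGame_\emb$ whose colimit in $\Games_\A$ is $G$ itself, and $\id_G$ cannot factor through any finite stage since an embedding of $G$ into a finite game would force $\Run(T)$ to be finite.
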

\begin{proof}
    It remains to show that finite games which are trivial for $\ali$ are weakly finitely small w.r.t. $(\emb, \FinGame)$.
    
    So let $G=(T,A)$ be a finite game which is trivial for $\ali$ and suppose $\mathrm{S}\colon \omega\to \mathbf{Fin}\Games_{\emb}$ is given with colimit cocone $\seq{q_n\colon \mathrm{S}n\to G_{\mathrm{S}}}$ in $\Games_\A$ and an embedding $f\colon G\embed G_\mathrm{S}$.

    By Theorem \ref{THM_WeakFinSmallGames}, $G$ is weakly finitely small in $\Games_\A$, so there must be an $n\in\omega$ and $g\colon G\to\mathrm{S}n$ in $\Games_\A$ such that $q_n\circ g = f$. In this case, since $f$ is a game embedding, so must $g$. Thus, $g$ attests that $G$ is weakly finitely small w.r.t. $(\emb, \FinGame)$ for $\mathrm{S}$, $\seq{q_n\colon \mathrm{S}n\to G_{\mathrm{S}}}$ and $f\colon G\embed G_\mathrm{S}$.
\end{proof}

Consider the functor $\mathrm{Free}\colon \Gmes\to \Games_\A$ such that $\mathrm{Free}T = (T,\Run(T))$ and \[\mathrm{Free}\left(T\stackrel{f}{\to} T'\right) = (T,\Run(T))\stackrel{f}{\to} (T',\Run(T')).\]

Then $\mathrm{Free}$ is clearly an isomorphism over its image in $\Games_\A$, which comprises of the full subcategory of $\Games_\A$ whose objects are games that are trivial for $\ali$. Thus, if $\mathbf{Fin}\Gmes$ is the full subcategory of $\Gmes$ comprised of trees with finitely many runs, then $\mathrm{Free}[\mathbf{Fin}\Gmes]$ is \emph{exactly} the full subcategory of $\Games_\A$ comprised of finite games which are trivial for $\ali$, i.e. (in view of Theorem \ref{THM_WeakFinSmallGames}), the full subcategory of $\Games_\A$ comprised of those games which are weakly finitely small in $\Games_\A$.

Thus, even though $\mathrm{Free}\colon \mathbf{Fin}\Gmes\to \mathrm{Free}[\mathbf{Fin}\Gmes]$ is an isomorphism, the following result highlights the crucial role of the ambient category $\mathbf{C}$ in Definition \ref{DEFN_WeaklyFinSmall}:

\begin{prop}\label{PROP_WeakFinSmallGmes}
	A decision tree $T$ is weakly finitely small in $\Gmes$ if, and only if, $T=\emptyset$.
\end{prop}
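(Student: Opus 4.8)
The plan is to show that the only decision tree which is weakly finitely small in $\Gmes$ is the empty tree, by exhibiting, for any \emph{nonempty} game tree $T$, a sequence in $\mathbf{Fin}\Gmes_{\mathrm{mon}}$ (where $\mathrm{mon}$ is an appropriate class of chronological embeddings closed under colimits) whose colimit is $T$ but through which the identity $\id_T$ does not factor. The key observation is that a nonempty game tree has infinitely many moments (by condition (II) of Definition \ref{DEF_Game}, every moment has a successor, so $T\supseteq\{\langle\,\rangle\}$ forces $T$ to contain sequences of every length), so $T$ itself is never a finite object of $\Gmes$; but unlike in $\Games_\A$, where the witnessing sequence had to respect the payoff structure, here we have complete freedom to truncate branches, since any tree with finitely many runs will do as a member of $\mathbf{Fin}\Gmes$.

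First I would handle the trivial direction: the empty decision tree is weakly finitely small in $\Gmes$ vacuously (any sequence with colimit $\emptyset$ must have all $\mathrm{S}n=\emptyset$ from some point, and the empty map factors). Next, for the substantive direction, fix a nonempty game tree $T$; I would build an exhaustion $\langle T_n : n<\omega\rangle$ of $T$ by subtrees each having only finitely many runs — concretely, since $\M(T)$ might be infinite, I would enumerate a family $\{t_k : k\in\omega\}\subseteq T$ whose ``cones'' $\overline{t_k}$ partition $\Run(T)$ (exactly as in the discussion preceding Proposition \ref{PROP_InfiniteNotSmall}), and set $T_n = \bigcup_{k\le n} T(t_k)$. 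If $\Run(T)$ is finite this still works (the sequence simply stabilizes in width but each $T_n$ must still be truncated in depth to finitely many runs, or one instead truncates each branch at level $n$ and extends by a constant tail to keep condition (II) — I'd use whichever bookkeeping is cleanest, following the template of Proposition \ref{PROP_NonTrivialNotSmall}). Each $T_n\in\mathbf{Fin}\Gmes$, the inclusions $\mathrm{S}_n^m\colon T_n\hookrightarrow T_m$ are chronological embeddings in the chosen class, and $\langle i_n\colon T_n\hookrightarrow T\rangle_{n<\omega}$ is the colimit cocone in $\Gmes$ (this is just the description of colimits of tree-sequences recalled after Lemma \ref{LEMMA_GameASeqLim}, applied componentwise to the trees).

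Then I would argue that $\id_T\colon T\to T$ cannot factor through any $i_n$: if $i_n\circ f = \id_T$ for some chronological $f\colon T\to T_n$, then $f$ is injective with left inverse $i_n$, so $i_n[T_n] = T_n\supseteq \id_T[T] = T$, contradicting $T\setminus T_n\neq\emptyset$ — which holds because, in the partition setup, $\overline{t_{n+1}}\neq\emptyset$ gives a moment along the branch to $t_{n+1}$ outside $\bigcup_{k\le n}T(t_k)$ (or, in the depth-truncation setup, $T$ has moments of length $>n$ that $T_n$ lacks). Hence $T$ is not weakly finitely small in $\Gmes$.

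The main obstacle I anticipate is purely bookkeeping: making sure each witnessing subtree $T_n$ genuinely lies in $\mathbf{Fin}\Gmes$ (i.e.\ is a legitimate game tree — closed under truncation and with every node having a successor — \emph{and} has only finitely many runs) while still forming an exhaustion of $T$ and keeping the inclusion maps inside the admissible class of morphisms. The cleanest route is to mimic verbatim the constructions already used in the proofs of Propositions \ref{PROP_InfiniteNotSmall} and \ref{PROP_NonTrivialNotSmall} — partitioning the runs when $\Run(T)$ is infinite, and grafting constant tails onto truncated branches when it is finite — so that no new technical machinery is needed; the contrast with the $\Games_\A$ case (Theorem \ref{THM_WeakFinSmallGames}, where finite trivial-for-$\ali$ games \emph{were} weakly finitely small) is precisely that here there is no payoff set whose constraints could force the factorization to land in a finite stage.
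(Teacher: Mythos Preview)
Your plan contains a misreading of the definition that leads you to work much harder than necessary, and in one case to a genuinely broken step. By the convention set right after Definition \ref{DEFN_WeaklyFinSmall}, ``weakly finitely small in $\Gmes$'' means the case $\mathbf{F}_\mM=\mathbf{C}_\mM=\mathbf{C}=\Gmes$: the witnessing sequence $\mathrm{S}$ may range over \emph{all} of $\Gmes$, with arbitrary chronological maps. There is no requirement that $\mathrm{S}n\in\mathbf{Fin}\Gmes$, so your repeated attempts to force $T_n$ into $\mathbf{Fin}\Gmes$ are unnecessary --- and in fact your claim ``Each $T_n\in\mathbf{Fin}\Gmes$'' for the partition construction is simply false (take $T=2^{<\omega}$: every nonempty cone $\overline{t_k}$ already has continuum many runs). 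This does not damage your argument in the infinite-run case, since the sequence still lies in $\Gmes$, but it shows the bookkeeping worry is misplaced.

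Where the confusion does bite is the finite-run case. If $\Run(T)$ is finite (in particular if $T$ has a single run), there is \emph{no} strictly increasing chain of game subtrees $T_n\subsetneq T$ exhausting $T$, so your ``colimit is $T$, test $\id_T$'' scheme cannot be carried out. Your fallback --- truncate at level $n$ and graft a constant $b$-tail, as in Proposition \ref{PROP_NonTrivialNotSmall} --- produces trees $T_n$ that are \emph{not} contained in $T$ (the $b$-tails use a fresh symbol), so there is no inclusion $T_n\hookrightarrow T$ and the colimit of the sequence is a strictly larger tree $T'\supsetneq T$, not $T$. The correct test morphism is then the inclusion $i\colon T\hookrightarrow T'$, and the obstruction is that the run $R\in\overline{i}[\Run(T)]$ never appears in any $\overline{i_n}[\Run(T_n)]$. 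This is exactly what the paper does: it runs the grafting construction of Proposition \ref{PROP_NonTrivialNotSmall} verbatim for an arbitrary $R\in\Run(T)$ (no case split on $|\Run(T)|$, no attempt to stay in $\mathbf{Fin}\Gmes$), obtains colimit $T'$, and observes that $i\colon T\hookrightarrow T'$ fails to factor. Once you drop the spurious $\mathbf{Fin}\Gmes$ constraint and replace $\id_T$ by $i$, your ``depth-truncation'' branch becomes this argument and the separate infinite-run case is superfluous.
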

\begin{proof}
	As $T=\emptyset$ is the initial object in $\Gmes$, it is clear that it is weakly finitely small in $\Gmes$.
	
	The rest of the proof will be pretty much the same as the proof of Proposition \ref{PROP_NonTrivialNotSmall}:
 
    Fix $R\in \Run(T)$ and $b$ such that $b\notin \M(T)$. Then we let $\mathrm{S}\colon \omega \to \Gmes$ be such that
     \begin{itemize}
        \item $\mathrm{S}n = T_n$, where
         \begin{gather*}
             T_n = T\setminus \lfloor R\restrict n\rfloor\cup\set{R\restrict k^\smallfrown \seq{b:i\le m}:k\le n, \,m\in\omega}.
         \end{gather*}
         \item $\mathrm{S}_n^m\colon T_n\embed T_m$ is the inclusion map.
     \end{itemize}
     Then $\seq{i_n\embed \mathrm{S}n\to T')}_{n\in\omega}$ with 
    \[T' = T\setminus \cup\set{R\restrict k^\smallfrown \seq{b:i\le m}:k,m\in\omega}\]    and each $i_n$ being the inclusion map is clearly the colimit cocone in $\Gmes$ of $\mathrm{S}$. 
    
    Now consider the inclusion map $i\colon T\embed T'$. Then there can be no $n\in\omega$ with $f\colon G\to \mathrm{S}n$ such that $i_n\circ f = i$, since $R\in \overline{i}[\Run(T)]$, but $R\notin \overline{i_n}[\Run(T_n)]$ for every $n\in\omega$. Hence, $T$ is not weakly finitely small in $\Gmes$.
\end{proof} 

At last:

\begin{thm}\label{THM_AFraisseGame}
	There is a Fra\"iss\'e sequence $\mathrm{F}_{\A}\colon \omega \to \mathrm{Free}[\mathbf{Fin}\Gmes]_{\emb}$. Moreover, the colimit in $\Games_\A$ of $\mathrm{F}_{\A}$ is $G_{\FL}$.
\end{thm}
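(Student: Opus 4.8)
The plan is to exhibit the Fra\"iss\'e sequence explicitly: take $\mathrm{F}_\A$ to be the chain $\mathrm{G}=(G_n)_{n<\omega}$ from Example~\ref{EX_notFinRep}. First I would verify that $\mathrm{G}$ really lands in $\mathrm{Free}[\mathbf{Fin}\Gmes]_\emb$. The key observation is that $\Run(T_n)=\set{R_0,\dots,R_n}$: if $R\in\Run(T_n)$ then for each $m$ one has $R\restrict m=R_{i_m}\restrict m$ for some $i_m\le n$, and since $\set{0,\dots,n}$ is finite some value $i$ occurs for infinitely many $m$, which forces $R=R_i$; the reverse inclusion is clear. Hence each $G_n=(T_n,A_n)$ is finite with $A_n=\Run(T_n)$, i.e.\ a finite game trivial for $\ali$, and the inclusions $G_n\le G_{n+1}$ are game embeddings (between games trivial for $\ali$ every injective chronological map is a game embedding, the $\B$-condition being vacuous). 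Since the colimit of a chain in $\Games_\A$ is the union of the game trees equipped with the union of the payoff sets, the colimit in $\Games_\A$ of $\mathrm{G}$ is $\bigcup_{n<\omega}G_n=(\bigcup_{n<\omega}\omega^n,c_{00}(\omega))=G_{\FL}$ (Lemma~\ref{LEMMA_GameASeqLim}, Example~\ref{EX_notFinRep}).

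The substance of the proof is then checking that $\mathrm{G}$ satisfies conditions (U) and (A) of Definition~\ref{DEF_FraisseSeq}. Both rest on the ``genericity'' of the branch set $c_{00}(\omega)=\set{R_i:i<\omega}$ inside the complete $\omega$-branching tree $\bigcup_{n<\omega}\omega^n$: any finite ``merging pattern'' of branches is realised by finitely many eventually-$0$ sequences (make the branches coincide on the prescribed common initial segments, use distinct natural numbers to force divergence at the prescribed levels, and pad with $0$'s afterwards), and any finitely many eventually-$0$ sequences occur among $R_0,\dots,R_m$ once $m$ is large enough. For (U), given a finite game $x$ trivial for $\ali$ this produces a chronological embedding of its game tree onto such a family of branches, hence a game embedding $x\embed G_m$. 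For (A), given a game embedding $f\colon G_n\embed x$ with $x$ finite and trivial for $\ali$, I would map the copy $f[T_n]\subseteq x$ back onto the standard copy of $T_n$ inside $T_m$ and extend over the remaining branches of $x$, assigning to each a fresh eventually-$0$ sequence reproducing its divergences with the $R_i$ $(i\le n)$ and with the other new branches; then, for $m$ large enough that all these sequences appear as some $R_j$, $j\le m$, one gets $g\colon x\embed G_m$ with $g\circ f=\mathrm{G}_n^m$.

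This shows $\mathrm{F}_\A:=\mathrm{G}$ is a Fra\"iss\'e sequence in $\mathrm{Free}[\mathbf{Fin}\Gmes]_\emb$ with colimit $G_{\FL}$ in $\Games_\A$, as required. (Alternatively one could invoke Theorem~3.7 in \cite{Kubis2014} to obtain \emph{some} Fra\"iss\'e sequence $\mathrm{F}_\A$ — for this one needs JEP and AP for $\mathrm{Free}[\mathbf{Fin}\Gmes]_\emb$, which hold because coproducts and pushouts of finite games in $\Games_\A$ (Section~9 in \cite{Duzi2024}) restrict to finite games trivial for $\ali$, every branch of such a colimit descending from a branch of one of the factors, together with the essential countability granted by Corollary~\ref{COR_FinGameCountable}; one would then identify $\colim_{\Games_\A}\mathrm{F}_\A$ with $G_{\FL}$ via Theorem~\ref{THM_WeakFinSmallUltrahom}, Theorem~\ref{THM_WeakFinSmallGames_Emb} and uniqueness of the Fra\"iss\'e limit, $G_{\FL}$ being already realised as $\colim_{\Games_\A}\mathrm{G}$.) I expect the main obstacle to be the verification of~(A): one must take care to keep the newly assigned branches eventually $0$ so that they actually occur among the $R_i$, and to notice that the divergence levels forced between a new branch and the fixed branches $R_0,\dots,R_n$ are automatically compatible — which is exactly where the hypothesis that $f$ is a game embedding (length- and truncation-preserving, injective) is used.
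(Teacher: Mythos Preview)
Your proposal is correct and uses the same explicit sequence $(G_n)_{n<\omega}$ as the paper, but verifies conditions (U) and (A) by a genuinely different route.

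You construct the required embeddings for (U) and (A) by hand: you describe directly how to realise any prescribed finite pattern of branch-divergences by finitely many eventually-$0$ sequences, and for (A) you extend the partial inverse $f[T_n]\mapsto T_n$ by assigning to each extra branch of $x$ a fresh eventually-$0$ sequence with the correct divergence levels. The compatibility check you flag (that the divergence constraints imposed by the various $R_i$ are jointly satisfiable) is exactly the ultrametric inequality, and your remark that padding with $0$'s keeps the new branches in $c_{00}(\omega)$ is the point that makes everything land inside some $G_m$.

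The paper instead routes both verifications through the theory developed just before the theorem. For (U) it embeds the given finite game into $G_{\FL}$ via Lemma~\ref{LEMMA_GFL_Age} and then invokes weak finite smallness of games trivial for $\ali$ (Theorem~\ref{THM_WeakFinSmallGames}) to factor the embedding through some $G_n$. For (A) it uses the $\FinGame_{\emb}$-injectivity of $G_{\FL}$ (Theorem~\ref{THM_GFL_Injective}) to extend the inclusion $i_n$ along $f\colon G_n\embed x$ to some $\tilde f\colon x\embed G_{\FL}$, again applies weak finite smallness to factor $\tilde f$ through some $G_m$, and then uses that $i_m$ is monic to deduce $g\circ f=\mathrm{F}_{\A}{}_n^m$. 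Your argument is more elementary and self-contained; the paper's argument is shorter on the page and serves to illustrate that Theorems~\ref{THM_GFL_Injective} and~\ref{THM_WeakFinSmallGames} already encode exactly the combinatorics you carry out explicitly. Your parenthetical alternative (Kubi\'s's existence theorem plus uniqueness of the Fra\"iss\'e limit) is also valid but, as you note, more circuitous than either direct argument.
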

\begin{proof}
	Let $G_n=(T_n,A_n)$ be as in Example \ref{EX_notFinRep} and then define $\mathrm{F}_{\A}\colon \omega \to \mathrm{Free}[\mathbf{Fin}\Gmes]_{\emb}$ as:
	\begin{itemize}
		\item On objects, $\mathrm{F}_{\A}n=G_n$.
		\item On morphisms, let ${\mathrm{F}_{\A}}_n^m$ be the inclusion of $G_n$ into $G_m$ (recall that $G_n\le G_m$).
	\end{itemize}
	Then it is clear that $\seq{i_n\colon G_n\embed G_{\FL}}$ is the colimit cocone in $\Games_{\A}$ of $\mathrm{F}_{\A}$, where each $i_n\colon G_n\embed G_{\FL}$ is the inclusion map. 
	
	Now let us show that $\mathrm{F}_{\A}$ is a Fra\"iss\'e sequence. 
	
	In order to show condition (a) of Definition \ref{DEF_FraisseSeq}, let $G$ be a finite game that is trivial for $\ali$. Then, because $G$ is finite, it follows from Lemma \ref{LEMMA_GFL_Age} that there is an embedding $f\colon G \embed G_{\FL}$. So condition (a) follows from the fact that $G$ is weakly finitely small in $\Games_\A$ and that $G_{\FL}$ is the colimit in $\Games_\A$ of $\mathrm{F}_{\A}$ (we use here once again the fact that if $f=g\circ h$ is an embedding, then $h$ must be an embedding).
	
	Finally, striving to show that condition (b) of Definition \ref{DEF_FraisseSeq} holds, suppose that $G$ is a finite game that is trivial for $\ali$ and $f\colon G_n  \embed G$ is an embedding. Then, because $G_{\FL}$ is injective in $\Games_{\emb}$ w.r.t. $\FinGame_{\emb}$'s morphisms, there exists $\tilde{f}\colon G  \embed G_{\FL}$ extending $i_n$ through $f$. In this case, it follows again from the fact that $G$ is weakly finitely small in $\Games_\A$ that there is an $m\ge n$ and $g\colon G \embed G_m$ such that $\tilde{f} = i_m\circ g$. But $i_n = i_m\circ {\mathrm{F}_\A}_n^m$ and
     \[
        i_n = \tilde{f}\circ f = i_m\circ g\circ f,
     \] 
     thus monicity of $i_m$ tells us that ${\mathrm{F}_\A}_n^m = g\circ f$, which concludes the proof.
\end{proof}

\section{Ultrahomogeneity revisited through the lens of category theory}\label{SEC_Directedness}
In view of Example \ref{EX_notFinRep} and Theorems \ref{THM_WeakFinSmallUltrahom} and \ref{THM_WeakFinSmallGames_Emb}, it is surprising that the Fra\"iss\'e limit of $\mathbf{Fin}\Games_{\emb}$ in $\Games_\A$ is ultrahomogeneous w.r.t. $(\emb, \FinGame)$. We pinpoint in this section general (and strictly weaker) categorical assumptions which hold in our setting and also guarantee ultrahomogeneity of the limit of Fra\"iss\'e sequences.

In what follows, given a pair $\mathbf{F}\subseteq\mathbf{C}$ with an object $X$ in $\bC$, we denote by $(\mathbf F/X)_\mathbf C$ the comma category $(\mathrm{I}/\mathrm{F})$ with $\mathrm{I}\colon \mathbf F\to \mathbf C$ being the inclusion and $\mathrm{F}\colon \mathbf 1 \to \mathbf C$ being such that $\mathrm{F}\ast = X$ (where $\mathbf{1}$ denotes the trivial category with object $*$). In other words, $(\mathbf F/X)_\mathbf C$ denotes the category in which:
\begin{itemize}
    \item objects are morphisms $x\colon A\to X$ in $\mathbf C$ with $A$ in $\mathbf F$
    \item a morphism from $x_A\colon A\to X$ to $x_B\colon B\to X$ is a morphism $f\colon A\to B$ in $\mathbf F$ such that $x_B\circ f=x_A$.
\end{itemize}

\begin{prop}\label{PROP_WeaffinsmallDirectedAmalg}
    Let $\mathbf{F}$ be a subcategory of $\mathbf{C}$ and suppose every object in $\mathbf{F}$ is weakly finitely small in $\mathbf{C}$. If $X$ is any object in $\mathbf{C}$ which is the colimit of some sequence in $\mathbf{F}$, then the comma category $(\mathbf{F}/X)_\mathbf{C}$ has the JEP.
\end{prop}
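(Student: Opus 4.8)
The plan is to take two objects $x_A\colon A\to X$ and $x_B\colon B\to X$ of the comma category $(\mathbf{F}/X)_{\mathbf{C}}$ and produce a common target, i.e.\ an object $x_C\colon C\to X$ in $(\mathbf{F}/X)_{\mathbf{C}}$ together with morphisms $A\to C$ and $B\to C$ in $\mathbf{F}$ commuting with the structure maps to $X$. Fix a sequence $\mathrm{S}\colon\omega\to\mathbf{F}_{\mathcal M}$ (or at least $\mathrm{S}\colon \omega\to\mathbf F$) whose colimit in $\mathbf C$ is $X$, with colimit cocone $(q_n\colon \mathrm{S}n\to X)_{n<\omega}$. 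The first step is to feed $x_A$ and $x_B$ into the weak finite smallness of $A$ and of $B$: since both $A$ and $B$ lie in $\mathbf{F}$ and hence are weakly finitely small in $\mathbf{C}$, there exist indices $n_A,n_B<\omega$ and morphisms $\tilde{x}_A\colon A\to\mathrm{S}n_A$, $\tilde{x}_B\colon B\to\mathrm{S}n_B$ with $q_{n_A}\circ\tilde{x}_A=x_A$ and $q_{n_B}\circ\tilde{x}_B=x_B$. (Here I am using weak finite smallness \emph{in $\mathbf C$}, i.e.\ with $\mathbf F_{\mathcal M}=\mathbf C_{\mathcal M}=\mathbf C$, as the hypothesis states; so the factoring morphisms are just $\mathbf C$-morphisms, which is all we need.)

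The second step is to merge the two indices: set $N=\max\{n_A,n_B\}$ and push both factorizations forward along the connecting maps of $\mathrm{S}$, obtaining $\mathrm{S}_{n_A}^{N}\circ\tilde{x}_A\colon A\to \mathrm{S}N$ and $\mathrm{S}_{n_B}^{N}\circ\tilde{x}_B\colon B\to \mathrm{S}N$; since the cocone $(q_n)$ commutes with the sequence, $q_N\circ(\mathrm{S}_{n_A}^{N}\circ\tilde{x}_A)=q_{n_A}\circ\tilde{x}_A=x_A$, and likewise for $B$. Now take $C=\mathrm{S}N$, which is an object of $\mathbf{F}$, and $x_C=q_N\colon C\to X$, which is an object of $(\mathbf{F}/X)_{\mathbf{C}}$; the two maps $a=\mathrm{S}_{n_A}^{N}\circ\tilde{x}_A$ and $b=\mathrm{S}_{n_B}^{N}\circ\tilde{x}_B$ are morphisms of $\mathbf{F}$ with $x_C\circ a=x_A$ and $x_C\circ b=x_B$, i.e.\ they are morphisms $x_A\to x_C$ and $x_B\to x_C$ in the comma category, witnessing the JEP.

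The only point requiring a bit of care — and the place I expect a referee to scrutinize — is whether the ``objects'' and ``morphisms'' of the comma category need to live in $\mathbf{F}_{\mathcal M}$ rather than just $\mathbf{F}$, and correspondingly whether weak finite smallness supplies factoring morphisms in the right class. Reading Definition \ref{DEFN_WeaklyFinSmall} with $\mathbf F_{\mathcal M}=\mathbf C_{\mathcal M}=\mathbf C$ (the hypothesis), there is no constraint, and the argument above is complete; but if one wants the JEP for $(\mathbf F/X)_{\mathbf C}$ with morphisms restricted to $\mathcal M$, one would additionally need that each connecting map $\mathrm{S}_n^m$ lies in $\mathcal M$ (true if $\mathrm S$ is a sequence in $\mathbf F_{\mathcal M}$) and that the factorizing morphisms $\tilde x_A,\tilde x_B$ can be chosen in $\mathcal M$, which is exactly the stronger ``weakly finitely small w.r.t.\ $(\mathcal M,\mathbf F)$'' notion. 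Since the proposition is stated with the unadorned ``weakly finitely small in $\mathbf C$'', the plain version suffices and the proof is essentially a diagram chase through the universal property of the colimit; no genuine obstacle remains beyond keeping the index bookkeeping straight.
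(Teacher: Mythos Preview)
Your proposal is correct and follows essentially the same approach as the paper: factor each given arrow through some stage of the chosen sequence via weak finite smallness, then pass to a common index (the paper simply says ``without loss of generality $n_A=n_B$'' where you compose with the connecting maps) and use that stage together with its cocone leg as the joint target. Your additional discussion of the $\mathcal M$-decorated variant is extra commentary not present in the paper, but the core argument matches.
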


\begin{proof}
    Suppose $\mathrm S\colon \omega\to \mathbf F$ is a sequence with colimit cocone $(x_n\colon \mathrm{S}n\to X)_{n<\omega}$ and let $f\colon A\to X$ and $g\colon B\to X$ be given with objects $A,B$ in $\mathbf F$. Since $A$ and $B$ are weakly finitely small, there are $n_A,n_B<\omega$, $f'\colon A\to S_{n_A}$ and $g'\colon B\to S_{n_B}$ such that 
    \begin{align*}
    x_{n_A}\circ f' &= f\\   
    x_{n_B}\circ g' &= g.
    \end{align*}
    Without loss of generality, we may assume that $n=n_A=n_B$. Hence, $f'\colon A\to \mathrm{S}n$, $g'\colon B\to \mathrm{S}n$ and $x_n\colon \mathrm{S}n\to X$ attest directedness of $(\mathbf F/ X)_{\mathbf C}$ for $f\colon A\to X$ and $g\colon B\to X$.
\end{proof}

We are particularly interested in fixing a class $\mM$ of monomorphisms in $\bC$, since:
\begin{prop}\label{PROP_DirMonAmalg}
    Suppose $\mM$ is a class of monomorphisms in $\bC$ and $(\mathbf F_\mM/X)_{\mathbf{C}_\mM}$ has the JEP. Then $(\mathbf F_\mM/X)_{\mathbf{C}_\mM}$ has the AP.
\end{prop}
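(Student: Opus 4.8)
The plan is to reduce the amalgamation property to the joint embedding property by exploiting the defining feature of the comma category $(\mathbf{F}_\mathcal{M}/X)_{\mathbf{C}_\mathcal{M}}$: each of its objects is, by construction, a \emph{monomorphism} into $X$. This is the categorical analogue of the slogan ``AP $=$ JEP in comma categories'' already invoked in the introduction, now in the form best suited to the present hypotheses.

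First I would unwind definitions. A span to be amalgamated in $(\mathbf{F}_\mathcal{M}/X)_{\mathbf{C}_\mathcal{M}}$ consists of objects $x_A\colon A\to X$, $x_B\colon B\to X$, $x_{B'}\colon B'\to X$ in $\mathcal{M}$ (with $A,B,B'$ in $\mathbf{F}$) together with comma-category morphisms $f\colon x_A\to x_B$ and $f'\colon x_A\to x_{B'}$; concretely, $f\colon A\to B$ and $f'\colon A\to B'$ lie in $\mathcal{M}$ and satisfy $x_B\circ f = x_A = x_{B'}\circ f'$. Applying the JEP of $(\mathbf{F}_\mathcal{M}/X)_{\mathbf{C}_\mathcal{M}}$ to the two objects $x_B$ and $x_{B'}$ yields an object $x_D\colon D\to X$ (so $x_D\in\mathcal{M}$, $D$ in $\mathbf{F}$) and morphisms $g\colon x_B\to x_D$, $g'\colon x_{B'}\to x_D$, i.e.\ $\mathcal{M}$-maps $g\colon B\to D$, $g'\colon B'\to D$ with $x_D\circ g = x_B$ and $x_D\circ g' = x_{B'}$. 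It then remains only to verify $g\circ f = g'\circ f'$, which is where monicity enters: postcomposing with $x_D$ gives $x_D\circ(g\circ f) = x_B\circ f = x_A = x_{B'}\circ f' = x_D\circ(g'\circ f')$, and since $x_D\in\mathcal{M}$ is a monomorphism we conclude $g\circ f = g'\circ f'$. Thus $x_D$, together with $g$ and $g'$, amalgamates the given span.

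There is no real obstacle beyond bookkeeping. The only points that deserve a word of care are that $g\circ f$ (hence also $g'\circ f'$) is a genuine morphism of $(\mathbf{F}_\mathcal{M}/X)_{\mathbf{C}_\mathcal{M}}$ — it lies in $\mathcal{M}$ since $\mathcal{M}$ is closed under composition, and $x_D\circ(g\circ f)=x_A$ exhibits it as a morphism $x_A\to x_D$ — and that the resulting equation is an equation in the comma category rather than merely in $\mathbf{C}$; the latter is automatic since composition in the comma category is inherited from $\mathbf{C}$. Note that this argument uses neither weak finite smallness nor the presentation of $X$ as a colimit of a sequence in $\mathbf{F}$: it depends only on the JEP hypothesis and on $\mathcal{M}$ consisting of monomorphisms.
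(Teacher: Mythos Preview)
Your proof is correct and follows essentially the same route as the paper's: apply JEP to the two outer legs of the span, then use monicity of the structural map $x_D\colon D\to X$ to deduce that the two resulting composites through $D$ agree. The only differences are notational (the paper writes the apex of the span as $C$ with legs $c_A,c_B$ rather than $A$ with legs $f,f'$) and that you make explicit the bookkeeping about $g\circ f$ being a comma-category morphism, which the paper leaves implicit.
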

\begin{proof}
    Suppose $c_A\colon C\to A$, $c_B\colon C\to B$, $f\colon A\to X$ and $g\colon B\to X$ are morphisms in $\mM$ such that $c = f\circ c_A = g\circ c_B\colon C\to X$, with objects $A,B,C$ in $\mathbf{F}$. Let $f'\colon A\to D$, $g'\colon B\to D$ and $x\colon D\to X$ be the morphisms attesting the JEP of $(\mathbf{F}_\mM/X)_{\mathbf{C}_\mM}$ for $f$ and $g$. Then $f'\colon A\to D$, $g'\colon B\to D$ and $x\colon D\to X$ also attest the AP of $(\mathbf{F}_\mM/ X)_{\mathbf{C}_\mM}$ for $c_A\colon C\to A$, $c_B\colon C\to B$, $f\colon A\to X$ and $g\colon B\to X$, since monicity of $x$ guarantees that 
    \[x_n\circ f'\circ c_A = x_n\circ g'\circ c_B = c\]
    entails $f'\circ c_A = g'\circ c_B$.
\end{proof}

In our game categories:
\begin{prop}
    For every game $G$, $(\mathbf{Fin}\Games_{\emb}/G)_{\Games_\A}$ and $(\mathbf{Fin}\Games_{\emb}/G)_{\Games_{\emb}}$ has the JEP. Furthermore, $(\mathbf{Fin}\Games_{\emb}/G)_{\Games_{\emb}}$ has the AP.
\end{prop}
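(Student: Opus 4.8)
The plan is to establish the two joint embedding properties by two separate elementary constructions, and then to read off the amalgamation property from Proposition~\ref{PROP_DirMonAmalg} with no further work.

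For $(\mathbf{Fin}\Games_{\emb}/G)_{\Games_\A}$ I would argue directly. Let $f\colon A\to G$ and $g\colon B\to G$ be two objects of the comma category, so $A,B$ are finite games and $f,g$ are $\A$-morphisms (not necessarily injective). Take $D:=A\sqcup B$, the coproduct in $\Games_\A$; since $\Run(A\sqcup B)=\Run(A)\sqcup\Run(B)$ is finite, $D$ is again a finite game, and $\FinGame_{\emb}$ is closed under coproducts by \cite{Duzi2024}. Let $x\colon D\to G$ be the unique $\A$-morphism with $x\circ i_A=f$ and $x\circ i_B=g$, where $i_A,i_B$ are the coproduct injections. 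Since the coproduct injections of finite games in $\Games_\A$ lie in $\emb$, $i_A$ is a morphism $f\to x$ and $i_B$ a morphism $g\to x$ in $(\mathbf{Fin}\Games_{\emb}/G)_{\Games_\A}$, which is exactly the JEP. (This coproduct argument does not transfer to the $\Games_{\emb}$-ambient case, because there the induced map $x$ is typically not injective.)

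For $(\mathbf{Fin}\Games_{\emb}/G)_{\Games_{\emb}}$ the objects $f\colon A\embed G$ and $g\colon B\embed G$ are game embeddings, and I would instead take $D$ to be the subgame of $G=(T,A_G)$ carried by the game tree
\[
T_D:=\set{R\restrict k:R\in\overline{f}[\Run(T_A)]\cup\overline{g}[\Run(T_B)],\ k<\omega}
\]
together with $A_D:=A_G\cap\Run(T_D)$. First I would check that $T_D$ is a game tree and --- the one point that needs a real argument --- that $\Run(T_D)$ is exactly the finite set $\overline{f}[\Run(T_A)]\cup\overline{g}[\Run(T_B)]$ that generated it: any branch of $T_D$ agrees with one of these finitely many runs on arbitrarily long initial segments, hence with it everywhere, by a pigeonhole argument. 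Thus $D$ is a finite game with $D\le G$, so the inclusion $x\colon D\embed G$ is a game embedding. Because every moment of a game tree extends to a run (iterate condition~(II) of Definition~\ref{DEF_Game}), $f$ carries $T_A$ into $T_D$, hence corestricts to a map $a\colon A\to D$; since $\overline{a}=\overline{f}$ and $A_D=A_G\cap\Run(T_D)$, the conditions for $a$ to be a game embedding reduce at once to the corresponding ones for $f$, and $x\circ a=f$. Symmetrically one obtains a game embedding $b\colon B\embed D$ with $x\circ b=g$, so $a,b$ witness the JEP.

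Finally, the AP of $(\mathbf{Fin}\Games_{\emb}/G)_{\Games_{\emb}}$ is immediate from Proposition~\ref{PROP_DirMonAmalg} applied with $\bC=\Games_\A$, $\mM=\emb$ (game embeddings, being injective, are monomorphisms in $\Games_\A$), $\bF=\FinGame$ and $X=G$: then $\bF_\mM=\mathbf{Fin}\Games_{\emb}$ and $\bC_\mM=\Games_{\emb}$, and the proposition turns the JEP just established into the AP. The only genuine obstacle in the whole argument is the verification that the subgame of $G$ generated by finitely many runs has itself only finitely many runs --- i.e.\ that no spurious new branch appears --- which is the pigeonhole step above; the rest is routine checking against the definitions of chronological map and of $\A$- and $\B$-morphisms, together with the known description of coproducts in $\Games_\A$.
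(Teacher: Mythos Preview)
Your proof is correct. For the $\Games_{\emb}$ case and the AP you proceed exactly as the paper does: your tree $T_D$ coincides with the paper's $f[T_A]\cup g[T_B]$ (since every moment of a game tree extends to a run, $f[T_A]=\{\overline{f}(R)\restrict k:R\in\Run(T_A),\,k<\omega\}$), and both finish the AP via Proposition~\ref{PROP_DirMonAmalg}. You are simply more explicit than the paper about the one nontrivial point---the pigeonhole verification that $\Run(T_D)$ is finite---which the paper leaves tacit.

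The one genuine difference is your treatment of $(\mathbf{Fin}\Games_{\emb}/G)_{\Games_\A}$. The paper asserts that the subgame construction works ``analogously'' there, but this is delicate: when $f,g$ are arbitrary $\A$-morphisms rather than embeddings, the corestrictions $G_i\to G'$ need not be injective, hence need not lie in $\mathbf{Fin}\Games_{\emb}$. Your coproduct argument sidesteps this entirely: the injections $i_A,i_B$ are always game embeddings, and only the induced arrow $x\colon A\sqcup B\to G$ needs to be an $\A$-morphism, which is automatic. So your route for this case is both different and cleaner than what the paper sketches, at the small cost of not working uniformly for the $\Games_{\emb}$ ambient (as you correctly note, since $x$ is typically not injective there).
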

\begin{proof}
    We give a proof for the JEP in $(\mathbf{Fin}\Games_{\emb}/G)_{\Games_{\emb}}$, as the proof for the JEP in $(\mathbf{Fin}\Games_{\emb}/G)_{\Games_\A}$ is analogous. Suppose embeddings $f\colon G_1\embed G$ and $g\colon G_2\embed G$ are given, with $G_1 = (T_1,A_1)$, $G_2 = (T_2,A_2)$ and $G = (T,A)$. Let $G' = (f[T_1]\cup g[T_2], A\cap \Run(f[T_1]\cup g[T_2]))\le G$ and $i\colon G'\embed G$ be its inclusion map. Letting $f'$ and $g'$ be the morphisms $f$ and $g$ obtained by restricting the codomain to $G'$, we then can conclude that $f'$ $g'$ and $i\colon G'\embed G$ attest the JEP in $(\mathbf{Fin}\Games_{\emb}/G)_{\Games_{\emb}}$ for $f$ and $g$.

    The amalgamation property in $(\mathbf{Fin}\Games_{\emb}/G)_{\Games_{\emb}}$ follows from the fact that game embeddings are mono in $\Games_{\A}$, combined with Proposition \ref{PROP_DirMonAmalg}.
\end{proof}

The following concepts will also play an important role in guaranteeing ultrahomogeneity of the Fra\"iss\'e limit.

\begin{defn}\label{DEFN_Squeeze}
    Suppose $\mathbf F$ is a full subcategory of $\bC$ and $\mM$ is a class of $\bC$-morphisms.

     We say that the pair $(\mM, \bF)$ has the {\em tight squeeze property} in $\bC$ if, for all $\mathrm{S},\mathrm{S}'\colon \omega \to \mathbf{F}_\mM$ with cocone $\gamma = (\gamma_n\colon \mathrm{S}n\to X)_{n<\omega}$ and natural transformation $\theta\colon \mathrm{S}'\to \mathrm{S}$ in $\mM$:
     
     \begin{itemize}
         \item[(i)]  if $\gamma\circ\theta$ is a colimit cocone in $\bC$, then so is $\gamma$;
         \item[(ii)] if $\mathrm{S}'$ is a Fra\"iss\'e sequence in $\mathbf{F}_\mM$, then so is $\mathrm{S}$.
     \end{itemize}
\end{defn}

 We will later investigate the concepts in Definition \ref{DEFN_Squeeze} with the pair $(\emb, \FinGame)$ in $\Games_\A$, but we first motivate it with 

\begin{thm}\label{THM_SqueezeDirUltrahom}
    Suppose the pair $(\mM, \bF)$ has the tight squeeze property in $\bC$, morphisms in $\mM$ are all mono, $(f_n\colon \mathrm{F}n\to F_{\lim})_{n<\omega}$ in $\mM$ is a colimit cocone in $\mathbf{C}$ of the Fra\"iss\'e sequence $\mathrm{F}$ in $\mathbf{F}_\mM$ and $(\mathbf{F}_\mM/F)_{\mathbf{C}_\mM}$ has the JEP. Then $F_{\lim}$ is ultrahomogeneous w.r.t. $(\mM,\mathbf{F})$. 
\end{thm}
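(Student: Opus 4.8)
The plan is to prove ultrahomogeneity by the standard "zig‑zag / back‑and‑forth between two copies of the Fra\"iss\'e sequence" argument, but carried out entirely with cocones and colimits rather than with concrete amalgams, so that the tight squeeze property does the work that finite smallness usually does. So fix a colimit cocone $(f_n\colon \mathrm{F}n\to F_{\lim})_{n<\omega}$ in $\mathbf{C}$ and suppose we are given $g,h\colon A\to F_{\lim}$ in $\mM$ with $A$ in $\mathbf{F}$. We want an automorphism $u$ of $F_{\lim}$ with $u\circ g=h$. The idea is to build, by simultaneous recursion, two sequences $\mathrm{S},\mathrm{S}'\colon\omega\to\mathbf{F}_\mM$, each of them a Fra\"iss\'e sequence, together with cocones $\gamma\colon\mathrm{S}\to F_{\lim}$, $\gamma'\colon\mathrm{S}'\to F_{\lim}$ in $\mM$, a natural transformation $\theta\colon\mathrm{F}\to\mathrm{S}$ in $\mM$ witnessing $\gamma\circ\theta = f$ (the fixed colimit cocone), and an analogous $\theta'\colon\mathrm{F}\to\mathrm{S}'$ with $\gamma'\circ\theta' = f$, such that $\mathrm{S}$ and $\mathrm{S}'$ become \emph{interleaved}: there are cofinal maps realizing each $\mathrm{S}n$ inside some $\mathrm{S}'m$ over $F_{\lim}$ and vice versa, with the very first stage carrying $g$ on one side and $h$ on the other.

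The recursion step is where the hypotheses enter. At a typical stage we have $\mathrm{S}n\xrightarrow{\gamma_n} F_{\lim}$ and (from the other sequence, already built one step ahead) some $\mathrm{S}'m\xrightarrow{\gamma'_m}F_{\lim}$ together with a morphism $\mathrm{S}n\to\mathrm{S}'m$ in $\mM$ commuting over $F_{\lim}$; we must produce $\mathrm{S}(n+1)$ receiving both $\mathrm{S}'m$ and the next term $\mathrm{F}k$ of the reference sequence (to keep the $\theta$'s going and to keep condition (U) alive), compatibly over $F_{\lim}$. This is exactly an instance of the JEP in the comma category $(\mathbf{F}_\mM/F_{\lim})_{\mathbf{C}_\mM}$: the two morphisms $\mathrm{S}'m\to F_{\lim}$ and (the appropriate composite) $\mathrm{F}k\to F_{\lim}$ are jointly embedded into some object of $\mathbf{F}$ over $F_{\lim}$, which we declare to be $\mathrm{S}(n+1)$, with $\gamma_{n+1}$ the structure map. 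To guarantee that the resulting $\mathrm{S}$ is itself a Fra\"iss\'e sequence we arrange that $\mathrm{F}$ maps cofinally into $\mathrm{S}$ via $\theta$ — i.e. $\theta$ is (after reindexing) a natural transformation in $\mM$ of the Fra\"iss\'e sequence $\mathrm{F}$ into $\mathrm{S}$ — and invoke clause (ii) of the tight squeeze property, which says precisely that a sequence receiving a Fra\"iss\'e sequence via an $\mM$‑natural transformation is again Fra\"iss\'e. Using the $\mM$‑monos and the uniqueness in the universal property, the back‑and‑forth bookkeeping (alternating which sequence gets pushed ahead) makes $\mathrm{S}$ and $\mathrm{S}'$ mutually cofinal.

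Once $\mathrm{S}$ and $\mathrm{S}'$ are built, we close the argument as follows. Because $\mathrm{F}$ is Fra\"iss\'e with colimit $F_{\lim}$ and $\theta\colon\mathrm{F}\to\mathrm{S}$ is an $\mM$‑natural transformation with $\gamma\circ\theta = f$ a colimit cocone, clause (i) of the tight squeeze property gives that $\gamma\colon\mathrm{S}\to F_{\lim}$ is itself a colimit cocone in $\mathbf{C}$; likewise $\gamma'\colon\mathrm{S}'\to F_{\lim}$ is a colimit cocone. Now the mutual cofinal interleaving of $\mathrm{S}$ and $\mathrm{S}'$ means the cocone $\gamma'$ restricts along the interleaving to a cocone on $\mathrm{S}$ and $\gamma$ restricts to a cocone on $\mathrm{S}'$; the two universal properties applied to one another yield mutually inverse morphisms $u\colon F_{\lim}\to F_{\lim}$ and $v\colon F_{\lim}\to F_{\lim}$ with $v\circ u = \mathrm{id}$ and $u\circ v = \mathrm{id}$ (monicity / uniqueness forces these composites to be the identity), so $u$ is an automorphism. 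Finally, tracking stage $0$, where $\gamma_0 = g\colon A\to F_{\lim}$ and $\gamma'_0 = h\colon A\to F_{\lim}$ and the interleaving map $\mathrm{S}0\to\mathrm{S}'?$ is the identity on $A$, we read off $u\circ g = h$, as desired.

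\textbf{Main obstacle.} The routine parts (JEP giving the amalgams, $\mM$‑monos giving uniqueness) are easy; the delicate point is the \emph{simultaneous} recursion — arranging a single interleaving that is cofinal in both directions while also keeping $\mathrm{F}$ mapping cofinally into each of $\mathrm{S}$ and $\mathrm{S}'$ so that clause (ii) of the tight squeeze property applies at every stage. In other words, the bookkeeping must juggle three demands at once (cofinality of $\mathrm{S}$ in $\mathrm{S}'$, of $\mathrm{S}'$ in $\mathrm{S}$, and of $\mathrm{F}$ in both), and one has to be careful that the JEP used at each step is the one in the comma category over $F_{\lim}$ (so the new maps genuinely commute over the limit), not merely JEP in $\mathbf{F}_\mM$; this is exactly why the hypothesis is stated as JEP of $(\mathbf{F}_\mM/F)_{\mathbf{C}_\mM}$.
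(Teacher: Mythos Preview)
Your overall architecture matches the paper's: use the JEP in $(\mathbf{F}_\mM/F_{\lim})_{\mathbf{C}_\mM}$ to squeeze new sequences between $\mathrm{F}$ and $F_{\lim}$, apply the tight squeeze property to make the new cocones colimits and the new sequences Fra\"iss\'e, and then extract an automorphism from a back-and-forth. But there is a genuine gap in your recursion step, and it is exactly the point where you conflate two different constructions.

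You propose to build the interleaving maps $\mathrm{S}n\to\mathrm{S}'m$ ``commuting over $F_{\lim}$'' by applying the JEP in the comma category. If those maps really satisfied $\gamma'_m\circ(\mathrm{S}n\to\mathrm{S}'m)=\gamma_n$, then the isomorphism $u$ produced from the back-and-forth would satisfy $u\circ\gamma_n=\gamma'_m\circ(\mathrm{S}n\to\mathrm{S}'m)=\gamma_n$ for all $n$, forcing $u=\mathrm{id}_{F_{\lim}}$ and hence $u\circ g=g$, not $h$. Concretely, at stage $0$ you want the interleaving map $A\to\mathrm{S}'?$ to be the identity, but ``commuting over $F_{\lim}$'' would then force $\gamma'_?|_A=g$, while the cocone relation $\gamma'_?\circ\mathrm{S}'^?_0=\gamma'_0=h$ pulls the other way. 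The back-and-forth maps between the two sequences simply do not live in a single comma category over $F_{\lim}$; the JEP there is the right tool for building the \emph{squeezes} $\theta\colon\mathrm{F}\to\mathrm{S}$ and $\theta'\colon\mathrm{F}\to\mathrm{S}'$ (this is the paper's Lemma~\ref{LEMMA_AmalgSqueeze}), but not for the interleaving.

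This is precisely why clause~(ii) of the tight squeeze property is not decorative. In the paper's argument one first builds $\mathrm{S}$ (factoring $g$) squeezed between $\mathrm{F}$ and $F_{\lim}$, then $\mathrm{R}$ (factoring $h$) squeezed between $\mathrm{S}$ and $F_{\lim}$; clause~(ii) makes both $\mathrm{S}$ and $\mathrm{R}$ Fra\"iss\'e, and \emph{then} the back-and-forth between (a prepended) $\tilde{\mathrm{S}}$ and $\mathrm{R}$ is produced purely from condition~(A) of Definition~\ref{DEF_FraisseSeq}, with no reference to $F_{\lim}$ at all. In your write-up clause~(ii) is invoked but never used downstream---a symptom of the same confusion. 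The fix is to separate the two constructions: use the comma-category JEP only for the squeeze lemma, and use the Fra\"iss\'e property (obtained via clause~(ii)) to run the back-and-forth.
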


In order to prove the above result, we need the following technical lemmas.

\begin{lemma}\label{LEMMA_AmalgSqueeze}
    Suppose $\mathbf F$ is a full subcategory of $\bC$ and $\mM$ is a class of $\bC$-morphisms, let $\mathrm{S}\colon \omega\to \mathbf{F}_\mM$ be a sequence with $(x_n\colon \mathrm{S}n\to X)_{n<\omega}$ in $\mM$ being a colimit cocone in $\mathbf{C}$ and suppose that the morphisms in $\mM$ are all mono and $(\mathbf{F}_\mM/X)_{\mathbf{C}_\mM}$ has the JEP. Then for every $f\colon A\to X$ there is a sequence $\tilde{\mathrm{S}}\colon \omega\to \mathbf{F}_\mM$, a cocone $(x_n'\colon \tilde{\mathrm{S}}n\to X)_{n<\omega}$ in $\mM$, a natural transformation $\theta\colon\mathrm{S}\to\tilde{\mathrm{S}}$ and $\tilde{f}\colon A\to \tilde{\mathrm{S}}0$ in $\mathbf{F}_\mM$ such that $\tilde{x}_0\circ \tilde{f} = f$ and
    \[(x_n\colon \mathrm{S}n\to X)_{n<\omega} = (x_n'\colon \tilde{\mathrm{S}}n\to X)_{n<\omega}\circ \theta.\]
\end{lemma}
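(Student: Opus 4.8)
Read the statement with the (evidently intended) hypotheses that $A$ is an object of $\mathbf F$ and $f\colon A\to X$ lies in $\mathcal M$, so that $f$ is a genuine object of the comma category $(\mathbf F_\mM/X)_{\mathbf C_\mM}$. The plan is to build $\tilde{\mathrm S}$ (with its transition maps), the cocone $(\tilde x_n\colon\tilde{\mathrm S}n\to X)_{n<\omega}$ (this is the $x_n'$ of the statement) and the natural transformation $\theta=(\theta_n)_{n<\omega}$ all at once, by a single recursion on $n$, amalgamating $f\colon A\to X$ into the sequence $\mathrm S$ one stage at a time using the JEP of $(\mathbf F_\mM/X)_{\mathbf C_\mM}$. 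Recall that the objects of this comma category are the $\mathcal M$-morphisms into $X$ with domain in $\mathbf F$, its morphisms are $\mathbf F_\mM$-morphisms commuting over $X$, and each $x_n\colon\mathrm Sn\to X$ is such an object since $(x_n)$ is a cocone in $\mathcal M$.

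\textbf{The recursion.} For the base step, apply the JEP to the two objects $f\colon A\to X$ and $x_0\colon\mathrm S0\to X$: this yields an object $\tilde x_0\colon\tilde{\mathrm S}0\to X$ of the comma category together with $\mathbf F_\mM$-morphisms $\tilde f\colon A\to\tilde{\mathrm S}0$ and $\theta_0\colon\mathrm S0\to\tilde{\mathrm S}0$ with $\tilde x_0\circ\tilde f=f$ and $\tilde x_0\circ\theta_0=x_0$. For the recursive step, suppose $\tilde x_n\colon\tilde{\mathrm S}n\to X$ and $\theta_n\colon\mathrm Sn\to\tilde{\mathrm S}n$ have been constructed; apply the JEP to $\tilde x_n\colon\tilde{\mathrm S}n\to X$ and $x_{n+1}\colon\mathrm S(n+1)\to X$ to obtain $\tilde x_{n+1}\colon\tilde{\mathrm S}(n+1)\to X$ in $\mathcal M$ and $\mathbf F_\mM$-morphisms $\tilde{\mathrm S}_n^{n+1}\colon\tilde{\mathrm S}n\to\tilde{\mathrm S}(n+1)$ and $\theta_{n+1}\colon\mathrm S(n+1)\to\tilde{\mathrm S}(n+1)$ satisfying $\tilde x_{n+1}\circ\tilde{\mathrm S}_n^{n+1}=\tilde x_n$ and $\tilde x_{n+1}\circ\theta_{n+1}=x_{n+1}$. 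This defines $\tilde{\mathrm S}\colon\omega\to\mathbf F_\mM$ (its transition maps $\tilde{\mathrm S}_n^{n+1}$ lie in $\mathcal M$ by construction), the first identity in the recursive step shows $(\tilde x_n)_{n<\omega}$ is a cocone, and the identities $\tilde x_n\circ\theta_n=x_n$ and $\tilde x_0\circ\tilde f=f$ give the two required equations, the former being precisely $(x_n)_{n<\omega}=(\tilde x_n)_{n<\omega}\circ\theta$.

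\textbf{Naturality — the crux.} The only thing still to verify is that $\theta$ is natural, i.e.\ that $\tilde{\mathrm S}_n^{n+1}\circ\theta_n=\theta_{n+1}\circ\mathrm S_n^{n+1}\colon\mathrm Sn\to\tilde{\mathrm S}(n+1)$ for every $n$, and this is exactly where the hypothesis that $\mathcal M$ consists of monomorphisms is used. Post-composing the left-hand side with $\tilde x_{n+1}$ gives $\tilde x_n\circ\theta_n=x_n$, while post-composing the right-hand side with $\tilde x_{n+1}$ gives $x_{n+1}\circ\mathrm S_n^{n+1}=x_n$ because $(x_n)$ is a cocone on $\mathrm S$; since $\tilde x_{n+1}\in\mathcal M$ is mono, the two sides are equal. (This argument is uniform in $n\ge 0$, covering the square $0\to 1$ as well.) I expect this naturality step to be the main point of the proof: it is what forces the transition maps $\tilde{\mathrm S}_n^{n+1}$ and the components $\theta_n$ to be produced together in a single recursion, and it is the only place monicity is essential. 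I would also remark that the colimit hypothesis on $(x_n)$ is not actually needed here — only that it is a cocone — but it is harmless to keep it, since the lemma will be invoked in that setting (e.g.\ towards Theorem \ref{THM_SqueezeDirUltrahom}).
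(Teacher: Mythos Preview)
Your proof is correct and follows essentially the same approach as the paper: a recursion that at the base step joins $f$ and $x_0$ via the JEP, and at each successor step joins $\tilde x_n$ and $x_{n+1}$, with monicity of $\tilde x_{n+1}$ forcing the naturality square. The only cosmetic difference is that the paper packages the successor step as an application of the amalgamation property in $(\mathbf F_\mM/X)_{\mathbf C_\mM}$ (obtained from JEP plus monicity via Proposition~\ref{PROP_DirMonAmalg}), whereas you apply the JEP directly and then verify the square by hand---but this is the same argument unrolled, and your remark that only the cocone (not the colimit) property of $(x_n)$ is used is also accurate.
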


\begin{proof}
    We define the desired sequence $\tilde{\mathrm{S}}\colon \omega\to \mathbf{F}_\mM$, cocone $(x_n'\colon \tilde{\mathrm{S}}n\to X)_{n<\omega}$ and natural transformation $\theta\colon \mathrm{S}\to \tilde{\mathrm{S}}$ recursively and simultaneously. We start by applying directedness of $(\mathbf C/X)_\mathbf D$ to $f\colon A\to X$ and $x_0\colon \mathrm{S}0\to X$ in order to find $\tilde{x}_0\tilde{\mathrm{S}}_0\to X$, $\tilde{f}\colon A\to \tilde{\mathrm{S}}0$ and $\theta_0\colon \mathrm{S}0\to X$ such that $\tilde{x}_0\circ\tilde{f} = f$ and $\tilde{x}_0\circ \theta_0 = x_0$ (as illustrated below).

    $$
    \begin{tikzpicture}[>=stealth]
    
    \node (X) at (5, 0) {$X$};
    \node (A) at (0,2) {$A$};
    \node (tS0) at (2,0) {\color{blue}$\tilde{\mathrm{S}}0$};
    \node (S0) at (0,-2) {$\mathrm{S}0$};

    {\footnotesize 
    \draw[->, color=blue] (S0) -- node[below right] {$\theta_{0}$} (tS0);
    \draw[->, color=blue] (A) -- node[above right] {$\tilde{f}$} (tS0);
    \draw[->] (A) edge[bend left=20] node[above right] {$f$} (X);

    \draw[->, color=blue] (tS0) -- node[above] {$\tilde{x}_{0}$} (X);
    \draw[->] (S0) edge[bend right=20] node[below right] {$x_0$} (X);}
    \end{tikzpicture}
    $$

    Suppose $\tilde{\mathrm{S}}$ is defined up to $k<\omega$, as well as $\theta$ and $(\tilde{x}_n\colon \tilde{\mathrm{S}}n\to X)_{n\le k}$. In view of Proposition \ref{PROP_DirMonAmalg}, we may apply the amalgamation property in $(\mathbf C/X)_\mathbf D$ to $\theta_k\colon \mathrm{S}k\to \tilde{\mathrm{S}}k$, $\mathrm{S}_k^{k+1}\colon \mathrm{S}k\to \mathrm{S}(k+1)$, $x_{k+1}\colon \mathrm{S}(k+1)\to X$ and $\tilde{x}_k\colon \tilde{\mathrm{S}}k\to X$ in order to find $\theta_{k+1}\colon \mathrm{S}(k+1)\to \tilde{\mathrm{S}}(k+1)$, $\tilde{\mathrm{S}}_k^{k+1}\colon \tilde{\mathrm{S}}k\to \tilde{\mathrm{S}}(k+1)$, and $\tilde{x}_{k+1}\colon \tilde{\mathrm{S}}(k+1)\to X$ such that $\tilde{\mathrm{S}}_k^{k+1}\circ\theta_{k} = \theta_{k+1}\circ \mathrm{S}_k^{k+1}$, $x_{k+1} = \tilde{x}_{k+1}\circ \theta_{k+1}$ and $\tilde{x}_k = \tilde{x}_{k+1}\circ \tilde{\mathrm{S}}_k^{k+1}$ (as illustrated below).
    $$
    \begin{tikzpicture}[>=stealth]
    
    \node (X) at (7, 0) {$X$};
    \node (Sk) at (0,0) {$\mathrm{S}k$};
    \node (tSk) at (2,2) {$\tilde{\mathrm{S}}k$};
    \node (tSk+1) at (4,0) {\color{blue}$\tilde{\mathrm{S}}(k+1)$};
    \node (Sk+1) at (2,-2) {$\mathrm{S}(k+1)$};

    {\footnotesize 
    \draw[->] (Sk) -- node[above left] {$\theta_k$} (tSk);
    \draw[->] (Sk) -- node[below left] {$\mathrm{S}_k^{k+1}$} (Sk+1);
    \draw[->, color=blue] (Sk+1) -- node[below right] {$\theta_{k+1}$} (tSk+1);
    \draw[->, color=blue] (tSk) -- node[above right] {$\tilde{\mathrm{S}}_k^{k+1}$} (tSk+1);
    \draw[->] (tSk) edge[bend left=20] node[above right] {$\tilde{x}_k$} (X);

    \draw[->, color=blue] (tSk+1) -- node[above] {$\tilde{x}_{k+1}$} (X);
    \draw[->] (Sk+1) edge[bend right=20] node[below right] {$x_{k+1}$} (X);}
    \end{tikzpicture}
    $$
    Hence the construction is complete.
\end{proof}

Now suppose $\mathrm{S},\mathrm{R}\colon \omega\to \mathbf{F}_\mM$ are two Fra\"iss\'e sequences in $\mathbf{F}_\mM$ and $f\colon \mathrm{R}0\to \mathrm{S}n_0$ is in $\mathbf{F}_\mM$. Then, using condition (b) of \ref{DEF_FraisseSeq} for $\mathrm{R}$, there must be an $n_1> n_0$ and an $f_1\colon \mathrm{S}n_0\to \mathrm{R}n_1$ such that  $f_1\circ f = \mathrm{R}_0^{n_1}$. We can now apply condition (b) of \ref{DEF_FraisseSeq} for $\mathrm{S}$, thus obtaining $n_2> n_1$ and an $f_2\colon \mathrm{R}n_1\to \mathrm{S}n_2$ such that  $f_2\circ f_1 = \mathrm{S}_{n_0}^{n_2}$. By proceeding in this manner we construct a so called \emph{back-and-forth} between $\mathrm{R}$ and $\mathrm{S}$ which starts with the given $f\colon \mathrm{R}0\to \mathrm{S}n_0$. Formally:

\begin{defn}
    Given an order embedding $\varphi\colon \omega\to \omega$, we denote by $\varphi_{\even},\varphi_{\odd},\varphi_{+1}\colon \omega\to \omega$ the order embeddings such that
    \begin{align*}
        \varphi_{\even}(n) &= \varphi(2n),\\
        \varphi_{\odd}(n) &= \varphi(2n+1),\\
        \varphi_{+1}(n) &= \varphi(n+1).
    \end{align*}

    We say that $\mathrm{S}'\colon \omega\to \mathbf{C}$ is a subsequence of $\mathrm{S}\colon \omega\to \mathbf{C}$ if there is an order embedding $\varphi\colon \omega\to \omega$ such that $\mathrm{S}' = \mathrm{S}\circ \varphi$. In this case, we write $\mathrm{S}'k = \mathrm{S}n_k$, where $n_k=\varphi(k)$.

    A \emph{back-and-forth} between two sequences $\mathrm{R},\mathrm{S}\colon \omega\to \mathbf{C}$ in a category $\mathbf{C}$ is an order embedding $\varphi\colon\omega \to \omega$ together with a pair of natural transformations 
    \begin{align*}
        \theta\colon \mathrm{R}\circ \varphi_{\even}\to \mathrm{S}\circ \varphi_{\odd}\\
        \eta\colon \mathrm{S}\circ \varphi_{\odd}\to \mathrm{R}\circ ({\varphi_{\even}})_{+1}
    \end{align*}
    such that the following diagrams commute for each $k<\omega$: 
            $$
    \begin{tikzpicture}[>=stealth]
    \node(R) at (-1,1) {$\mathrm{R}\colon$};
    \node (R0dots) at (0,1) {$\cdots$};
    \node (R0) at (2,1) {$\mathrm{R}\varphi(2k)$};
    \node (R2) at (6,1) {$\mathrm{R}\varphi(2k+2)$};
    \node (Rdots) at (8,1) {$\cdots$};
    
    \node(S) at (-1,-1) {$\mathrm{S}\colon$};
    \node (S0dots) at (2,-1) {$\cdots$};
    \node (S0) at (4,-1) {$\mathrm{S}\varphi(2k+1)$};
    \node (S1) at (8,-1) {$\mathrm{S}\varphi(2k+3)$};
    \node (S2) at (10,-1) {$\cdots$};

    {\footnotesize 
    \draw[->] (R0dots) -- node[above]{} (R0);
    \draw[->] (R0) -- node[above]{$\mathrm{R}_{\varphi(2k)}^{\varphi(2k+2)}$} (R2);
    \draw[->] (R2) --  (Rdots);

    \draw[->] (S0dots) -- node[below]{} (S0);
    \draw[->] (S0) -- node[below]{$\mathrm{S}_{\varphi(2k+1)}^{\varphi(2k+3)}$} (S1);
    \draw[->] (S1) -- node[below]{} (S2);

    \draw[->] (R0) -- node[below,left]{$\theta_k$} (S0);
    \draw[->] (S0) -- node[below,left]{$\eta_{k}$} (R2);
    \draw[->] (R2) -- node[below,left]{$\theta_{k+1}$} (S1);
    }
    \end{tikzpicture}
    $$
\end{defn} 

This concept is particularly relevant for us because:

\begin{lemma}\label{LEMMA_BackAndForth_Iso}
    Suppose $\mathbf F$ is a full subcategory of $\bC$ and $\mM$ is a class of $\bC$-morphisms, let $\mathrm{R}, \mathrm{S}\colon \omega\to \mathbf{F}_\mM$ be sequences with $(r_n\colon \mathrm{R}n\to X)_{n<\omega}$ and $(s_n\colon \mathrm{S}n\to X)_{n<\omega}$ in $\mM$ being colimit cocones in $\mathbf{C}$.

    If there is a back-and-forth between $\mathrm{R}$ and $\mathrm{S}$, then there is an isomorphism $f\colon X\to X$ which commutes with the cocones $(r_n\colon \mathrm{R}n\to X)_{n<\omega}$ and $(s_n\colon \mathrm{S}n\to X)_{n<\omega}$, together with the back-and-forth.
\end{lemma}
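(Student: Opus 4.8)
The plan is to \emph{interleave} the sequences $\mathrm{R}$ and $\mathrm{S}$ along the given back-and-forth into a single $\omega$-sequence, and then to exhibit $X$, equipped with two a priori different colimit cocones over it, as the colimit of that interleaved sequence; the sought isomorphism is then the canonical comparison map between these two colimit presentations of the same object.

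Concretely, I would first define a sequence $\mathrm{U}\colon \omega\to\mathbf{F}_\mM$ by $\mathrm{U}(2k)=\mathrm{R}\varphi(2k)$ and $\mathrm{U}(2k+1)=\mathrm{S}\varphi(2k+1)$ on objects, with generating morphisms $\mathrm{U}_{2k}^{2k+1}=\theta_k$ and $\mathrm{U}_{2k+1}^{2k+2}=\eta_k$; this lies in $\mathbf{F}_\mM$ because the $\theta_k$ and $\eta_k$ are in $\mM$ and the objects come from $\mathrm{R}$ and $\mathrm{S}$. The two commuting triangles in the definition of a back-and-forth are exactly the identities $\eta_k\circ\theta_k=\mathrm{R}_{\varphi(2k)}^{\varphi(2k+2)}$ and $\theta_{k+1}\circ\eta_k=\mathrm{S}_{\varphi(2k+1)}^{\varphi(2k+3)}$, so the subsequence of $\mathrm{U}$ on even indices \emph{is} $\mathrm{R}\circ\varphi_{\even}$ and the one on odd indices \emph{is} $\mathrm{S}\circ\varphi_{\odd}$.

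The engine of the proof is the standard observation that, for a diagram $\mathrm{D}\colon\omega\to\mathbf{C}$ and a cofinal order embedding $\psi\colon\omega\to\omega$, a cocone on $\mathrm{D}$ is colimiting precisely when its restriction along $\psi$ is a colimiting cocone on $\mathrm{D}\circ\psi$. Since $\varphi$ is an order embedding it is unbounded, so $\varphi_{\even}$ and $\varphi_{\odd}$ are cofinal in $\omega$; applying the observation to $\mathrm{R}$ and to $\mathrm{S}$ shows that $(r_{\varphi(2k)})_{k<\omega}$ and $(s_{\varphi(2k+1)})_{k<\omega}$ are colimiting cocones on $\mathrm{R}\circ\varphi_{\even}$ and $\mathrm{S}\circ\varphi_{\odd}$, respectively. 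I would then extend each of these to a cocone on all of $\mathrm{U}$ with apex $X$: put $w_{2k}=r_{\varphi(2k)}$, $w_{2k+1}=r_{\varphi(2k+2)}\circ\eta_k$, and $v_{2k+1}=s_{\varphi(2k+1)}$, $v_{2k}=s_{\varphi(2k+1)}\circ\theta_k$. A direct check using the two back-and-forth identities together with the cocone equations for $(r_n)$ and $(s_n)$ shows $(w_n)_{n<\omega}$ and $(v_n)_{n<\omega}$ are indeed cocones on $\mathrm{U}$; and since their restrictions to the cofinal even (resp.\ odd) indices are the colimiting cocones just obtained, the observation again makes both $(w_n)_{n<\omega}$ and $(v_n)_{n<\omega}$ colimiting cocones of $\mathrm{U}$ in $\mathbf{C}$.

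Finally, by essential uniqueness of colimits there is a unique isomorphism $f\colon X\to X$ with $f\circ w_n=v_n$ for every $n$; unwinding the definitions this says $f\circ r_{\varphi(2k)}=s_{\varphi(2k+1)}\circ\theta_k$ and $f\circ r_{\varphi(2k+2)}\circ\eta_k=s_{\varphi(2k+1)}$ for all $k$, i.e.\ $f$ commutes with the two colimit cocones and with the back-and-forth, and compatibility with the full cocones $(r_n)_{n<\omega}$ and $(s_n)_{n<\omega}$ then follows once more by cofinality. The only point requiring genuine care is the cofinal-subsequence fact above (and being precise about which triangles the statement actually asks to commute — note $f$ does \emph{not} satisfy $f\circ r_n=s_n$ in general); the remaining verifications are routine diagram chases.
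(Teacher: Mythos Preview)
Your proof is correct and follows essentially the same route as the paper: both arguments pass to the cofinal subsequences $\mathrm{R}\circ\varphi_{\even}$ and $\mathrm{S}\circ\varphi_{\odd}$, use the back-and-forth identities $\eta_k\circ\theta_k=\mathrm{R}_{\varphi(2k)}^{\varphi(2k+2)}$ and $\theta_{k+1}\circ\eta_k=\mathrm{S}_{\varphi(2k+1)}^{\varphi(2k+3)}$, and obtain the isomorphism from the universal property of colimits. The only cosmetic difference is that you package the argument via an explicit interleaved sequence $\mathrm{U}$ and invoke essential uniqueness of its colimit once, whereas the paper constructs $f$ and its inverse $g$ separately (from the two colimit cocones) and then checks $g\circ f=f\circ g=\id_X$ by hand.
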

\begin{proof}
    Let $\varphi\colon \omega\to \omega$ be the order embedding which, together with the natural transformations $\theta\colon \mathrm{R}\circ \varphi_{\even}\to \mathrm{S}\circ \varphi_{\odd}$ and $
        \eta\colon \mathrm{S}\circ \varphi_{\odd}\to \mathrm{R}\circ {\varphi_{\even}}_{+1}$, attests the back-and-forth between $\mathrm{R}$ and $\mathrm{S}$. Note that $(r_{\varphi(2n)}\colon \mathrm{R}\varphi(2n)\to X)_{n<\omega}$ and $(s_{\varphi(2n+1)}\colon \mathrm{S}\varphi(2n+1)\to X)_{n<\omega}$ are colimit cocones as well (see, e.g., Theorem 1 in page 217 of \cite{MacLane1978}).
        
        For each $n\in\omega$, let $r_{\varphi(2n)}' = s_{\varphi(2n+1)}\circ \theta_n\colon \mathrm{R}\varphi(2n)\to X$. Then, since $\theta$ is a natural transformation, $(r_{\varphi(2n)}'\colon \mathrm{R}n\to X)_{n<\omega}$ is a cocone. In this case, since $(r_{\varphi(2n)}\colon \mathrm{R}n\to X)_{n<\omega}$ is a colimit cocone, there must be a unique $f\colon X\to X$ such that $f\circ r_{\varphi(2n)} = r_{\varphi(2n)}' = s_{\varphi(2n+1)}\circ \theta_n$ for every $n<\omega$. Similarly, we can find a unique $g\colon X\to X$ such that $g\circ s_{\varphi(2n+1)} = s_{\varphi(2n+1)}' = r_{\varphi(2n+2)}\circ \eta_n$.

    We claim that $f\circ g = g\circ f = \id_X$, which concludes the proof. Indeed, let $n<\omega$. Then 
    $$(g\circ f)\circ r_{\varphi(2n)} = g\circ r_{\varphi(2n)}' = g\circ (s_{\varphi(2n+1)}\circ \theta_n).$$ 
    On the other hand, 
    $$g\circ s_{\varphi(2n+1)} =  s_{\varphi(2n+1)}' = r_{\varphi(2n+2)}\circ \eta_n$$
    Thus, since the back-and-forth condition tells us that $\eta_n\circ \theta_n = \mathrm{R}_{\varphi(2n)}^{\varphi(2n+2)}$, we obtain that
    $$(g\circ f)\circ r_{\varphi(2n)} = r_{\varphi(2n+2))}\circ \mathrm{R}_{\varphi(2n))}^{\psi(\varphi(n))} = r_{\varphi(2n)}.$$
    In this case, it follows from the universal property of the colimit cocone that $g\circ f = \id_X$. The proof of $f\circ g = \id_X$ is analogous.
\end{proof}

At last:

\begin{myproof}{Theorem}{THM_SqueezeDirUltrahom}
    Let $g,h\colon A\to F_{\lim}$ be given. Then we use Lemma \ref{LEMMA_AmalgSqueeze} to find another sequence $\mathrm{S}\colon \omega\to \mathbf{C}$ with a cocone $(s_n\colon \mathrm{S}n\to F_{\lim})_{n<\omega}$ in $\mM$ squeezed between $\mathrm{F}$ and $F$, together with $\tilde{g}\colon A\to \mathrm{S}0$ in $\mathbf{F}_\mM$ such that $g= s_0\circ \tilde{g}$. The tight squeeze property of $(\mM,\mathbf{F})$ in $\bC$ then entails that $(s_n\colon \mathrm{S}n\to F_{\lim})_{n<\omega}$ is a colimit cocone in $\mathbf{C}$ and that $\mathrm{S}$ is a Fra\"iss\'e sequence in $\mathbf{F}_\mM$ as well.

    We now apply Lemma \ref{LEMMA_AmalgSqueeze} again, this time to $\mathrm{S}$, $(s_n\colon \mathrm{S}n\to F_{\lim})_{n<\omega}$ and $g\colon A\to F_{\lim}$ in order to find yet another Fra\"iss\'e sequence $\mathrm{R}\colon \omega\to \mathbf{F}_\mM$ with a cocone $(\tilde{r}_n\colon \mathrm{R}n\to F_{\lim})_{n<\omega}$, together with $\tilde{h}\colon A\to \mathrm{R}0$ such that $h = r_0\circ \tilde{h}$, which is squeezed between $\mathrm{S}$ and $F_{\lim}$.

    Consider the sequence $\tilde{\mathrm{S}}\colon \omega\to \mathbf{F}_\mM$ which is equal to $\mathrm{S}$ concatenated with $\tilde{g}\colon A\to \mathrm{S}_0$ so that $\tilde{\mathrm{S}}n+1 = \mathrm{S}n$ for every $n<\omega$ and $\tilde{\mathrm{S}}$ is a Fra\"iss\'e sequence (due to the amalgamation property in $\bF_\mM$). Since $\mathrm{R}$ is a Fra\"iss\'e sequence as well, there is a back-and-forth between $\tilde{\mathrm{S}}$ and $\mathrm{R}$ given by an order embedding $\varphi\colon \omega\to \omega$ together with a pair of natural transformations 
    \begin{align*}
        \theta\colon \tilde{\mathrm{S}}\circ \varphi_{\even}\to \mathrm{R}\circ \varphi_{\odd}\\
        \eta\colon \mathrm{R}\circ \varphi_{\odd}\to \tilde{\mathrm{S}}\circ {\varphi_{\even}}_{+1}
    \end{align*}
    which starts with $\theta_0 = \mathrm{R}_0^1\circ\tilde{h}$ (in particular, $\varphi(0) = 0$ and $\varphi(1) = 1$), as illustrated in the commuting diagram below.
    $$
    \begin{tikzpicture}[>=stealth]
    
    \node (Stilde) at (5.5,2) {$\tilde{\mathrm{S}}\varphi(2)$};
    \node (Sequal) at (5.5,1.5) {\rotatebox{90}{$=$}};
    \node(S) at (-1,1) {$\mathrm{S}\colon$};
    \node (A) at (0,1) {$A$};
    \node (Stilde0) at (0,2) {$\tilde{\mathrm{S}}0$};
    \node (Sphi) at (1.25,2) {$\tilde{\mathrm{S}}\varphi(0)$};
    \node (Aequal) at (0,1.5) {\rotatebox{90}{$=$}};
    \node (Sphiequal) at (0.5,2) {$=$};
    \node (S0) at (2,1) {$\mathrm{S}0$};
    \node (S1) at (5.5,1) {$\mathrm{S}(\varphi(2)-1)$};
    \node (Sdots) at (10,1) {$\cdots$};

    \node(R) at (-1,-1) {$\mathrm{R}\colon$};
    \node (R0) at (0,-1) {$\mathrm{R}0$};
    \node (R1) at (2,-1) {$\mathrm{R}1$};
    \node (R3) at (8,-1) {$\mathrm{R}\varphi(3)$};
    \node (Rdots) at (10,-1) {$\cdots$};
    \node (Rphi1) at (2,-2) {$\mathrm{R}\varphi(1)$};
    \node (Requal) at (2,-1.5) {\rotatebox{90}{$=$}};
    
    {\footnotesize 
    \draw[->] (R0) -- node[below]{$\mathrm{R}_0^1$} (R1);
    \draw[->] (R1) -- node[below]{$\mathrm{R}_{\varphi(1)}^{\varphi(3)}$} (R3);
    \draw[->] (R3) --  (Rdots);

    \draw[->] (A) -- node[above]{$\tilde{g}$} (S0);
    \draw[->] (S0) -- node[above]{$\mathrm{S}_{0}^{\varphi(2)-1}$} (S1);
    \draw[->] (S1) -- node[above]{} (Sdots);

    \draw[->] (A) -- node[below,left]{$\tilde{h}$} (R0);
    \draw[->] (A) -- node[above,right]{$\theta_0$} (R1);
    \draw[->] (R1) -- node[above,right]{$\eta_{0}$} (S1);
    \draw[->] (S1) -- node[above,right]{$\theta_{1}$} (R3);
    }
    \end{tikzpicture}
    $$
    
    It thus follows from Lemma \ref{LEMMA_BackAndForth_Iso} that there is an isomorphism $x\colon F_{\lim}\to F_{\lim}$ such that, in particular,
    
    \[h = r_0\circ \tilde{h} =  r_1\circ\mathrm{R}_0^1\circ\tilde{h}= r_1\circ \theta_0   = x\circ s_{\varphi(2)-1}\circ\eta_0 \circ \theta_0 = x\circ s_{\varphi(2)-1}\circ \mathrm{S}_{0}^{\varphi(2)-1}\circ \tilde{g} = x\circ g,\]
    which concludes the proof.
\end{myproof}

Back to the games, we want to show:

\begin{thm}\label{THM_GamesTightSqueeze}
     The pair $(\emb, \FinGame)$ has the tight squeeze property in $\Games_\A$.
\end{thm}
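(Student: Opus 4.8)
The plan is to verify the two conditions (i) and (ii) of Definition \ref{DEFN_Squeeze} separately for the pair $(\emb,\FinGame)$ in $\Games_\A$, exploiting the explicit description of colimit cocones of $\omega$-sequences in $\Games_\A$ furnished by Lemma \ref{LEMMA_GameASeqLim}. So fix sequences $\mathrm{S},\mathrm{S}'\colon\omega\to\FinGame_\emb$, a natural transformation $\theta\colon\mathrm{S}'\to\mathrm{S}$ with each $\theta_n$ a game embedding, and a cocone $\gamma=(\gamma_n\colon\mathrm{S}n\to X)_{n<\omega}$ in $\Games_\A$ with $X=(T,A)$.

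For part (i): assume $\gamma\circ\theta$ is a colimit cocone; we must show $\gamma$ is too. The idea is that each $\theta_n\colon\mathrm{S}'n\embed\mathrm{S}n$ being a game embedding, and the $\mathrm{S}n$ being finite games, force $\overline{\theta_n}\colon\Run(\mathrm{S}'n)\to\Run(\mathrm{S}n)$ to be a surjective isometry — indeed a bijection, since finiteness and the existence of the comparison morphism $\mathrm{S}_n^m$ type arguments used in Theorem \ref{THM_FraisseSeq} apply here once we observe that a natural transformation of $\omega$-sequences consisting of embeddings between finite games, all of whose components are coherent, pins down $\mathrm{S}'n\cong\mathrm{S}n$ only \emph{after passing along the sequence}; more precisely, I will show that the comparison map $\colim(\gamma\circ\theta)\to\colim\gamma$ induced by $\theta$ is an isomorphism in $\Gmes$ (game-tree level) using the coproduct-then-coequalizer description, and is then automatically an isomorphism in $\Games_\A$ because both payoff sets are computed as the same image of $\bigsqcup A_n$. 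Concretely: the colimit tree $T_{\mathrm{S}}$ of Lemma \ref{LEMMA_GameASeqLim} depends only on the transition morphisms, and $\theta$ induces a map $T_{\mathrm{S}'}\to T_{\mathrm{S}}$ which, because each $\theta_n$ is injective and chronological and the $\theta_n$ are compatible, is a bijection on each level once one checks (using that $\mathrm{S}n$ is finite so nothing is ``lost at infinity'') that every $\sim$-class in $\coprod T_n$ meets the image of $\coprod T'_n$; hence $\gamma$ is a colimit cocone whenever $\gamma\circ\theta$ is.

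For part (ii): assume $\mathrm{S}'$ is a Fra\"iss\'e sequence in $\FinGame_\emb$; we must show $\mathrm{S}$ is too. Condition (U) for $\mathrm{S}$ is immediate: given a finite game $G$, by (U) for $\mathrm{S}'$ there is $n$ with an embedding $G\embed\mathrm{S}'n$, and composing with $\theta_n\colon\mathrm{S}'n\embed\mathrm{S}n$ gives an embedding $G\embed\mathrm{S}n$. Condition (A) is the main point: given $n<\omega$ and an embedding $f\colon\mathrm{S}n\embed Y$ with $Y$ finite, I will form the pushout of $f$ along the embedding $\theta_n\colon\mathrm{S}'n\embed\mathrm{S}n$, which exists and stays in $\FinGame_\emb$ (pushouts of finite games along embeddings are finite, and pushout injections are embeddings, by Section 9 of \cite{Duzi2024}); call it $Z$ with legs $\mathrm{S}n\embed Z$ and $Y\embed Z$. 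Now apply (A) for $\mathrm{S}'$ to the composite embedding $\mathrm{S}'n\embed Z$: there is $m\ge n$ and $g'\colon Z\embed\mathrm{S}'m$ with $g'\circ(\mathrm{S}'n\embed Z)=({\mathrm{S}'})_n^m$. Composing $g'$ with $\theta_m$ and with the leg $Y\embed Z$ produces an embedding $g\colon Y\embed\mathrm{S}m$; the required identity $\mathrm{S}_n^m=g\circ f$ follows by chasing the pushout square together with the naturality square $\theta_m\circ({\mathrm{S}'})_n^m=\mathrm{S}_n^m\circ\theta_n$, using that $\theta_n$ is monic (game embeddings are mono in $\Games_\A$) to cancel on the right.

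\textbf{Main obstacle.} I expect part (i) to be the delicate step: one must argue carefully that no run of the colimit of $\gamma$ ``escapes'' the image of the colimit of $\gamma\circ\theta$, and here the finiteness of the $\mathrm{S}n$ together with the fact that $\theta_n$ is a \emph{game} embedding (so it reflects, not merely preserves, winning on its image, via $\overline{\theta_n}$) is exactly what rules out the pathology exhibited in Example \ref{EX_notFinRep}; making this precise with the coproduct/coequalizer description of Lemma \ref{LEMMA_GameASeqLim}, and checking the payoff sets match, is the bulk of the work. Part (ii), by contrast, is a routine pushout-and-back-and-forth manipulation once the JEP/AP and pushout facts for $\FinGame_\emb$ are invoked.
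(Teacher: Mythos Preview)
Both halves of your plan contain genuine gaps.

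For part (i), your central claim --- that ``every $\sim$-class in $\coprod T_n$ meets the image of $\coprod T'_n$'' --- does \emph{not} follow from injectivity of the $\theta_n$ and finiteness of the $\mathrm{S}n$ alone. Take $\mathrm{S}'n$ to be a fixed one-run game and $\mathrm{S}n$ a fixed two-run game for all $n$, with all transition maps identities and each $\theta_n$ the inclusion; then $T_{\mathrm{S}}$ has moments never hit by the image of $\coprod T'_n$. What actually forces surjectivity of the induced map $T_{\mathrm{S}'}\to T_{\mathrm{S}}$ is the hypothesis you never invoke: that $\gamma\circ\theta$ is a colimit cocone and that the $\gamma_n$ themselves are embeddings (so injective), which lets one pull any $t\in T_{\mathrm{S}}$ back through $\gamma$ and then through $\theta$. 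The paper sidesteps the comparison map $T_{\mathrm{S}'}\to T_{\mathrm{S}}$ entirely: it uses Lemma~\ref{LEMMA_GameCollectivelyE*}, which says a cocone of injections is a colimit iff it is jointly $\mathcal E^*$, and then observes that ``jointly $\mathcal E^*$'' for $\gamma\circ\theta$ immediately gives ``jointly $\mathcal E^*$'' for $\gamma$, since anything in the image of $\gamma_n\circ\theta_n$ is already in the image of $\gamma_n$.

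For part (ii), your final step is wrong: from $(g\circ f)\circ\theta_n=\mathrm{S}_n^m\circ\theta_n$ you propose to ``cancel $\theta_n$ on the right since it is monic'', but monomorphisms cancel on the \emph{left}, not the right. With $\theta_n$ a proper embedding this cancellation is simply unavailable, and no amount of pushout-chasing fixes it (your pushout of the span $\mathrm{S}n\xleftarrow{\theta_n}\mathrm{S}'n\xrightarrow{f\circ\theta_n}Y$ only gives $u\circ\theta_n=(v\circ f)\circ\theta_n$, not $u=v\circ f$). The paper does not attempt an abstract argument here: it works concretely with games, reducing to the case where $G_0$ has a single branch outside the image of $f$, and then uses the squeeze data --- specifically that $\mathrm{S}$ sits between the Fra\"iss\'e sequence $\mathrm{S}'$ and its colimit --- to locate in some $\mathrm{S}'m$ a run with the right winning condition extending the right moment, from which the required $g$ is built by hand.
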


In order to do this, let us consider the following common notion from category theory:

\begin{defn}\label{DEFN_collectively}
    Suppose $\mathbf{C}$ is a category with arbitrary coproducts and let $\mC$ be a class of morphisms in $\mathbf{C}$. We say that a family $\mF = \set{A_i\stackrel{f_i}\to X: i\in I}$ of morphisms in $\mathbf{C}$ is {\em jointly $\mC$} if the morphism $f\colon \coprod_{i\in I}G_i\to G$ given by the universal property of the coproduct inclusions applied to $\mF$ is in $\mC$.
\end{defn}

Let
\begin{align*}
    \mathcal E &= \set{(T,A)\stackrel{f}\to (T',A'): T' = f[T]}\\
    \mathcal E^* &= \set{(T,A)\stackrel{f}\to (T',A'): f\in \mathcal E, \, A' = \overline{f}[A]},
\end{align*}
be families of $\A$-morphisms (the notation is taken from \cite{Duzi2024}). As seen in Proposition 3.3 of \cite{Duzi2024}, $\mathcal E$ is the class of epimorphisms in $\Games_\A$.

Then it follows directly from Lemma \ref{LEMMA_GameASeqLim} that
 
\begin{lemma}\label{LEMMA_GameCollectivelyE*}
    Let $\mathrm{S}\colon \omega \to \Games_{\A}$ be a sequence of injective maps. Then a cocone $(g_n\colon \mathrm{S}n\to G)_{n<\omega}$ is a colimit in $\Games_\A$ if, and only if, it is comprised of injective maps and it is jointly $\mathcal{E}^*$.
\end{lemma}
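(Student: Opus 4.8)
The plan is to measure an arbitrary cocone against the concrete colimit cocone $(q_n\colon \mathrm{S}n\to G_{\mathrm{S}})_{n<\omega}$ built in Lemma \ref{LEMMA_GameASeqLim}, taking advantage of the fact that $G_{\mathrm{S}}$ and the $q_n$ are written out explicitly there. First I would record two properties of that cocone which are immediate from the construction. Since $\mathrm{S}$ consists of injective maps, every transition $\mathrm{S}_n^m$ is injective, so $q_n(t)=q_n(s)$ forces $t\sim s$ and hence $t=s$; thus each $q_n$ is injective. Moreover, because $T_{\mathrm{S}}=\set{\seq{\,}}\cup\set{\seq{[t\restrict i]:0<i\le|t|}:t\in\coprod_{n<\omega}T_n}$ and $A_{\mathrm{S}}=\set{\seq{[R\restrict i]:i\in\omega\setminus\set{0}}:R\in\bigsqcup_{n<\omega}A_n}$, the morphism $q\colon\coprod_{n<\omega}\mathrm{S}n\to G_{\mathrm{S}}$ induced by the cocone $(q_n)$ is surjective on moments and satisfies $A_{\mathrm{S}}=\overline q[\bigsqcup_{n<\omega}A_n]$; that is, $q\in\mathcal E^*$, i.e., $(q_n)_{n<\omega}$ is jointly $\mathcal E^*$. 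So the canonical colimit cocone already has the two stated properties, and it remains only to check that they characterize colimit cocones.

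For the ``only if'' implication, let $(g_n\colon \mathrm{S}n\to G)_{n<\omega}$ be any colimit cocone. By uniqueness of colimits there is an isomorphism $\varphi\colon G_{\mathrm{S}}\to G$ in $\Games_\A$ with $\varphi\circ q_n=g_n$ for every $n$. Each $g_n$ is then injective, being the composite of the injective $q_n$ with an isomorphism, and the morphism $g\colon\coprod_{n<\omega}\mathrm{S}n\to G$ induced by $(g_n)$ equals $\varphi\circ q$. Since $\varphi$ is an isomorphism (hence a bijection on moments with $\overline\varphi[A_{\mathrm{S}}]=A_G$) and $q\in\mathcal E^*$, a one-line computation gives $g\in\mathcal E^*$, so $(g_n)$ is jointly $\mathcal E^*$.

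For the ``if'' implication, suppose $(g_n\colon \mathrm{S}n\to G)_{n<\omega}$ is a cocone of injective $\A$-morphisms which is jointly $\mathcal E^*$. The universal property of the colimit cocone $(q_n)$ yields a unique $\A$-morphism $\tilde g\colon G_{\mathrm{S}}\to G$ with $\tilde g\circ q_n=g_n$ for all $n$, and then $\tilde g\circ q=g$ for the induced coproduct maps. I claim $\tilde g$ is an isomorphism in $\Games_\A$, which finishes the proof; recall that an $\A$-morphism $f\colon (T,A)\to(T',A')$ is an isomorphism exactly when $f$ is a bijection on moments and $A'=\overline f[A]$. Surjectivity of $\tilde g$ on moments and the identity $A_G=\overline{\tilde g}[A_{\mathrm{S}}]$ fall out of $g=\tilde g\circ q$ together with $g\in\mathcal E^*$ and $q\in\mathcal E^*$, namely $T_G=g[\coprod_n T_n]=\tilde g[q[\coprod_n T_n]]=\tilde g[T_{\mathrm{S}}]$ and $A_G=\overline g[\bigsqcup_n A_n]=\overline{\tilde g}[\overline q[\bigsqcup_n A_n]]=\overline{\tilde g}[A_{\mathrm{S}}]$. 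For injectivity of $\tilde g$ on moments --- the step where the individual injectivity of each $g_n$ is used, not merely joint injectivity --- note that the images $q_n[T_n]$ increase with $n$ and cover $T_{\mathrm{S}}$ (by construction of $G_{\mathrm{S}}$), so any two moments of $G_{\mathrm{S}}$ identified by $\tilde g$ may be written as $q_n(s)$ and $q_n(t)$ for a common $n$; then $g_n(s)=g_n(t)$, and injectivity of $g_n$ gives $s=t$. Hence $\tilde g$ is an isomorphism, so $(g_n)$ is a colimit cocone.

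I expect the genuinely routine parts to be the two diagram chases through $g=\tilde g\circ q$ and the increasing-chain argument for injectivity; the single point needing care is the characterization of isomorphisms in $\Games_\A$ invoked above --- equivalently, that the set-inverse of a bijective $\A$-morphism $\tilde g$ with $A_G=\overline{\tilde g}[A_{\mathrm{S}}]$ is again an $\A$-morphism. This is precisely what makes ``jointly $\mathcal E^*$'' the correct hypothesis: surjectivity on moments (the $\mathcal E$ part) supplies half of bijectivity, while strengthening to $\mathcal E^*$ supplies exactly the condition $A_G=\overline{\tilde g}[A_{\mathrm{S}}]$.
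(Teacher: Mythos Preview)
Your proof is correct and follows essentially the same approach as the paper: verify that the explicit colimit cocone $(q_n)$ from Lemma~\ref{LEMMA_GameASeqLim} is injective and jointly $\mathcal E^*$, and then transfer both directions via the comparison isomorphism with the canonical colimit. The paper's proof is terser --- it states only that the canonical cocone has the two properties and that these are preserved under composition with an isomorphism --- whereas you spell out the ``if'' direction in full (showing $\tilde g$ is bijective on moments with $A_G=\overline{\tilde g}[A_{\mathrm S}]$); this is a welcome elaboration rather than a different route.
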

\begin{proof}
    Note that the colimit cocone used in Lemma \ref{LEMMA_GameASeqLim} has the desired properties when $\mathrm{S}$ is a sequence of injective maps. Hence, the result follows from the observation that the composition of injective maps with  isomorphisms in $\Games_\A$ (which are bijective $\A$ and $\B$-morphisms) are injective maps and that $g\circ \mF = \set{g\circ f: f\in \mF}$ is jointly $\mathcal E^*$ whenever $\mF$ is and $g$ is an isomorphism.
\end{proof}

\begin{remark}
    Four distinct orthogonal factorization systems were presented for $\Games_\A$ in \cite{Duzi2024}, with the pair $(\mathcal E^*,\mathcal M)$, where $\mathcal M$ denotes the class of injective $\A$-morphisms in $\Games_\A$, among them. In this case, Lemma \ref{LEMMA_GameCollectivelyE*} shows that $(\mathcal E^*,\mathcal M)$ is particularly ``special'' in the sense that the class $\mathcal E^*$ also complements $\mathcal M$ for the colimit of $\mathcal M$-sequences --  it can also be shown that a simple dualization of Definition \ref{DEFN_collectively} also yields a dualization of Lemma \ref{LEMMA_GameCollectivelyE*}. 

    This property is not shared by any of the other three factorization systems of $\Games_\A$ presented in \cite{Duzi2024}. Indeed, consider $(\mathcal E,\mathcal M_*)$, where $\mathcal M_*$ denotes the class of game embeddings, $\mathrm{F}_{\A}\colon \omega \to \mathbf{Fin}\A\Games_{\emb}$ be the sequence obtained in Theorem \ref{THM_AFraisseGame} and fix a colimit cocone $(f_n\colon \mathrm{F}_{\A}n\to G_{\FL})_{n<\omega}$ in $\Games_\A$. Let $G = (\bigcup_{n<\omega}\omega^n, \omega^\omega)$. Then $(f_n\colon \mathrm{F}_{\A}n\to G)_{n<\omega}$ is also a cocone of game embeddings which is jointly $\mathcal E$, despite not existing any $\A$-morphism $f\colon G\to G_{\FL}$ through which $(f_n\colon \mathrm{F}_{\A}n\to G_{\FL})_{n<\omega}$ factors. 
\end{remark}

\begin{myproof}{Theorem}{THM_GamesTightSqueeze}
    Suppose $\mathrm{S}\colon \omega \to \mathbf{Fin}\Games_{\emb}$ with cocone $(x_n\colon \mathrm{S}n\to G)_{n<\omega}$ and natural transformation $\theta\colon \mathrm{S'}\to \mathrm{S}$ in $\Games_{\emb}$  are such that $(x_n\circ \theta_n \colon S'_n\to G)_{n<\omega}$ is a colimit cocone in $\Games_\A$.

    Since $(x_n\colon \mathrm{S}n\to G)_{n<\omega}$ is a collection of game embeddings, it suffices to show in view of Lemma \ref{LEMMA_GameCollectivelyE*} that it is also jointly $\mathcal E^*$. Suppose $G=(T,A)$, $\mathrm{S}n = (T_n,A_n)$ and $\mathrm{S}n' = (T_n',A_n')$ for each $n<\omega$. Given $t\in T$, since $(x_n\circ \theta_n \colon S'_n\to G)_{n<\omega}$ is jointly $\mathcal E^*$ (we use the other implication of Lemma \ref{LEMMA_GameCollectivelyE*} here) there is an $n\in<\omega$ and an $s\in T_n'$ such that $x_n\circ \theta_n (s) = t$. Then $\theta_n(s)$ attests that $(x_n\colon \mathrm{S}n\to G)_{n<\omega}$ is jointly $\mathcal E$ for $t$. Finally, let $R\in A$. Once again, since $(x_n\circ \theta_n \colon S'_n\to G)_{n<\omega}$ is jointly $\mathcal E^*$, there is an $n\in<\omega$ and an $R'\in A_n'$ such that $\overline{x_n\circ \theta_n}(R') = \overline{x_n}\circ \overline{\theta_n}(R') = R$. Then $\overline{\theta_n}(R')$ attests that $(x_n\colon \mathrm{S}n\to G)_{n<\omega}$ is jointly $\mathcal E^*$ for $R$. 

    Now suppose that, furthermore, $\mathrm{S}'$ is a Fra\"iss\'e sequence in $\mathbf{Fin}\Games_{\emb}$. In order to show that $\mathrm{S}$ is a Fra\"iss\'e sequence as well, it suffices to show that for every $f\colon \mathrm{S}n\to G_0$ there is a $g\colon G_0\to \mathrm{S}n$ for some $m>n$ such that $g\circ f = \mathrm{S}_n^m$. We will provide the proof for the case in which the game tree $T_0$ of $G_0$ as a single branch which is not on the image of $f$ (the proof of the general case can then be obtained by a simple induction on the number of such branches), so let $R\in \Run(T_0)$ be said branch and $t\in T_0$ be the last moment of $R$ which is in the image of $f$. 
    
    Since $\mathrm{S}'$ is a Fra\'iss\'e sequence and $\mathrm{S}$ is squeezed between $\mathrm{S}'$ and the Fra\"iss\'e limit $G$, it follows that there must be an $m>n$ such that $\theta_m\colon \mathrm{S}'m\to \mathrm{S}m$ contains $\mathrm{S}_n^m(t)$ in its image and such that there is an $R'\in \Run(\mathrm{S}'m)$ extending $\theta_m^{-1}(\mathrm{S}_n^m(t))$ with the same winning condition as $R$ (that is, such that $\ali$ wins in $R'$ if, and only if, $\ali$ wins in $R$). Thus, we can let $g\colon G_0\to \mathrm{S}m$ be such that $g(s) = \mathrm{S}_n^m$ for every $s$ in the image of $f$ and $\overline{g}(R) = \overline{\theta_m}(R')$, which concludes the proof.
\end{myproof}

Thus, we have now provided a categorical proof of the previously shown:

\begin{cor}
    $G_{\FL}$ is ultrahomogeneous w.r.t. $(\emb, \mathbf{Fin}\Games)$ in $\Games_\A$.
\end{cor}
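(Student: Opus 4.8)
The plan is to obtain the statement as an immediate application of Theorem~\ref{THM_SqueezeDirUltrahom}, instantiated with $\mathbf{C} = \Games_\A$, $\mathbf{F} = \FinGame$ and $\mM = \emb$. All the substantive work has already been carried out in the preceding results, so the only task is to check that the four hypotheses of Theorem~\ref{THM_SqueezeDirUltrahom} hold in this concrete instance.

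First, the tight squeeze property of $(\emb, \FinGame)$ in $\Games_\A$ is precisely the content of Theorem~\ref{THM_GamesTightSqueeze}. Second, game embeddings, being injective $\A$-morphisms, are monomorphisms in $\Games_\A$ (recorded above, e.g.\ in the discussion following Proposition~\ref{PROP_DirMonAmalg}), so every morphism in $\mM$ is mono. Third, Theorem~\ref{THM_FraisseSeq} provides a Fra\"iss\'e sequence $\mathrm{F}\colon \omega\to \FinGame_\emb$, and the proof of that theorem constructs a colimit cocone $(f_n\colon \mathrm{F}n\embed G_{\FL})_{n<\omega}$ in $\Games_\A$ whose legs are the embeddings supplied by Lemma~\ref{LEMMA_GFL_Age}; hence this colimit cocone lies in $\mM$, as required. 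Fourth, since $(\Games_\A)_\emb$ is by definition the category of games and game embeddings, i.e.\ $\Games_\emb$, the comma category that must be controlled is $(\FinGame_\emb/G_{\FL})_{\Games_\emb}$, which has the JEP by the proposition established above asserting that $(\FinGame_\emb/G)_{\Games_\emb}$ has the JEP (indeed the AP) for every game $G$.

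With these four facts in hand, Theorem~\ref{THM_SqueezeDirUltrahom} yields that the colimit of $\mathrm{F}$ in $\Games_\A$, namely $G_{\FL}$, is ultrahomogeneous w.r.t.\ $(\emb, \FinGame)$. There is no genuine obstacle to overcome here: the difficulty is entirely concentrated in Theorems~\ref{THM_GamesTightSqueeze} and~\ref{THM_SqueezeDirUltrahom} and in the comma-category computations preceding them, and the only point requiring a little care is the bookkeeping that identifies the abstract categorical data (an $\mM$-colimit cocone of a Fra\"iss\'e sequence, a $\mathbf{C}_\mM$-comma category with the JEP) with their game-theoretic counterparts. The conclusion re-proves Corollary~\ref{COR_GFLUlthom} by purely categorical means, bypassing the back-and-forth argument used in Section~\ref{SEC_Ultrahom}.
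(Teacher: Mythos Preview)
Your proposal is correct and matches the paper's intended argument: the corollary is presented without an explicit proof precisely because it is meant to follow by instantiating Theorem~\ref{THM_SqueezeDirUltrahom} with the hypotheses supplied by Theorem~\ref{THM_GamesTightSqueeze}, Theorem~\ref{THM_FraisseSeq}, and the JEP of $(\FinGame_\emb/G)_{\Games_\emb}$. Your careful bookkeeping of the four hypotheses is exactly what the paper leaves implicit.
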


And it should be emphasized here that, while the proof provided in Section \ref{SEC_Ultrahom} for the above Corollary may be regarded as ``\emph{more straightfoward}'', it can also be seen as ``\emph{more accidental}'' -- since it relies a lot on the particularities of the objects and embeddings of $\Games_\A$. With the categorical proof provided in this section, on the other hand, we are able to reach deeper into the properties of the category $\Games_\A$ which can be generalized to other categories in order to guarantee the ultrahomogeneity of their Fra\"iss\'e limits.

We have already explained the importance of considering a proper subclass $\mM$ of $\bC$, instead of letting $\mathbf{C}_\mM=\mathbf{C}$ in Section \ref{SEC_WeaklyFiniteSmall}, but we provide here yet another argument to the significance of the further generality of the proper subclass of morphisms in our context.

Indeed, this is made clear by looking at Theorem \ref{THM_GamesTightSqueeze}: replacing $\Games_\A$ by $\Games_{\emb}$ in it does not work, since the pair $(\mathrm{Morph}(\Games_\A),\mathbf{Fin}\Games)$ fails to have the tight squeeze property in $\Games_{\A}$, as illustrated in the example below.

\begin{ex}
    Consider 
    \begin{align*}
        T_1 = \set{\seq{0:k< n}:n<\omega},  \quad &G_1 = (T_1, \emptyset)\\
        T_2 = \set{\seq{i:k<n}:i\in\{0,1\}, \, n<\omega} , \quad &G_2 = (T_2, \emptyset)
    \end{align*}
    Then $G_1$ and $G_2$ are finite games. Let $\mathrm{S}_1\colon \omega\to \mathbf{Fin}\Games_{\emb}$ and $\mathrm{S}_2\colon \omega\to \mathbf{Fin}\Games_{\emb}$ be such that $\mathrm{S}_1n = G_1$ and $\mathrm{S}_2n = G_2$ for every $n<\omega$ (with the identity being the connecting maps).

    In this case, if we consider the unique chronological $f\colon T_2\to T_1$, then $\mathrm{S}_2$ with the cocone $(f\colon \mathrm{S}_2n\to G_1)_{n<\omega}$ in $\Games_\A$ is squeezed between $\mathrm{S}_1$ and $G_1$. However, $(f\colon \mathrm{S}_2n\to G_1)_{n<\omega}$ in $\Games_\A$ is clearly not a colimit cocone of $\mathrm{S}_2$ in $\Games_\A$.
\end{ex}

\section{A brief study of the KPT-correspondence over finite games}\label{SEC_KPT}
The \textit{KPT-correspondence} established in \cite{Kechris2005} relates properties of the topological automorphisms group of the Fra\"iss\'e limit of an $L$-structure to combinatorial properties of the class of finitely generated substructures.  We end this subsection by fully describing $\Aut_{\Games}(G_{\FL})$ in terms of a known group (we don't specify here $\Games_\A$ or $\Games_\B$ because the automorphism group is the same in both categories).

Let 
$S=\{0\}\cup\set{1/n: n>0}$ 
and denote by $\mU_S$ the class of all finite ultrametric spaces whose distance function's range is contained in $S$. 

Then $\mU_S$ is a Fra\"iss\'e class in the classical model theoretical sense. In fact, such classes, as well as its Fra\"iss\'e limit and its automorphism group, were studied in depth in \cite{NguyenVanThe2009}.

Note that $\Met(G_{\FL})=((\omega^\omega,d), c_{00}(\omega))$, where
\[
d(R,R')=\begin{cases}
	\frac{1}{\Delta(R,R')+1}\text{ if $R\neq R'$,}\\
	0 \text{ otherwise.}
\end{cases}
\]

Then it can easily be shown that $\FL(\mU_S)$ is isomometric to $c_{00}(\omega)$ with the induced subspace metric, so we may consider $\FL(\mU_S)=c_{00}(\omega)$. 

On the other hand, $(\omega^\omega,d)$ is the completion of the subspace $c_{00}(\omega)$. In this case, any isometry $f$ from $c_{00}(\omega)$ into itself can be uniquely extended to an isometry $\tilde{f}$ from $\omega^\omega$ into itself. Moreover, note that $f\in \Aut_{\Sub{\CUMet}}((\omega^\omega,d), c_{00}(\omega)))$ if, and only if, $f$ is an isometry from $\omega^\omega$ into itself that restricts to an isometry from $c_{00}(\omega)$ into itself. Hence, in view of Theorem 7.3 in \cite{Duzi2024}:

\begin{prop}\label{PROP_AUT_MET}
	The group $\Aut_{\Games}(G_{\FL})$ is isomorphic to $\Aut_{\UMet}(\FL(\mU_s))$. Namely, the map 
	\begin{center}
		\begin{tikzcd}[row sep = 0em]
			\Aut_{\Games}(G_{\FL}) \arrow[r, "\varphi"] &	\Aut_{\UMet}(\FL(\mU_s)) \\
			f  \arrow[r, mapsto] &   \overline{f}|_{c_{00}(\omega)}
		\end{tikzcd}
	\end{center}
	is a group isomorphism.
\end{prop}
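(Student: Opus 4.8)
The plan is to establish that $\varphi$ is a well-defined group homomorphism, then exhibit an explicit inverse, namely the map sending an isometry $g$ of $\FL(\mU_S) = c_{00}(\omega)$ to its unique continuous (indeed isometric) extension $\tilde g$ to the completion $(\omega^\omega, d)$, viewed as a game automorphism of $G_{\FL}$ via the equivalence $\Met$ from \cite{Duzi2024}. The bulk of the argument is really a collection of identifications that have already been set up in the excerpt; the task is to assemble them carefully.

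First I would invoke Theorem 7.3 of \cite{Duzi2024} (together with the discussion preceding the statement) to reduce everything to the metric side: since $\Games_\A$ is equivalent to $\MetGame \subseteq \Sub{\CUMet}$ and $G_{\FL}$ corresponds under this equivalence to $((\omega^\omega,d), c_{00}(\omega))$, an equivalence of categories induces a group isomorphism $\Aut_{\Games}(G_{\FL}) \cong \Aut_{\Sub{\CUMet}}(((\omega^\omega,d), c_{00}(\omega)))$, and this isomorphism is precisely $f \mapsto \overline{f}$ restricted appropriately (the passage from a chronological automorphism $f$ of the game tree to its action $\overline f$ on runs is exactly the content of the equivalence). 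So it remains to show that the restriction map $h \mapsto h|_{c_{00}(\omega)}$ is a group isomorphism from $\Aut_{\Sub{\CUMet}}(((\omega^\omega,d), c_{00}(\omega)))$ onto $\Aut_{\UMet}(c_{00}(\omega))$, where we use the identification $\FL(\mU_S) = c_{00}(\omega)$ noted in the excerpt.

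Next I would check the three ingredients for that restriction map. \emph{Well-definedness:} if $h$ is an automorphism of $((\omega^\omega,d), c_{00}(\omega))$ in $\Sub{\CUMet}$, then $h$ is a surjective isometry of $\omega^\omega$ with $h[c_{00}(\omega)] \subseteq c_{00}(\omega)$; applying the same to $h^{-1}$ gives $h[c_{00}(\omega)] = c_{00}(\omega)$, so $h|_{c_{00}(\omega)}$ is a surjective isometry of $c_{00}(\omega)$, i.e. an element of $\Aut_{\UMet}(c_{00}(\omega))$. \emph{Homomorphism:} restriction obviously respects composition and identities. \emph{Bijectivity:} injectivity is the statement that an isometry $h$ of $\omega^\omega$ is determined by its values on the dense subset $c_{00}(\omega)$ (continuity plus density), and surjectivity is the statement that every isometry $g$ of $c_{00}(\omega)$ extends to an isometry of the completion $(\omega^\omega, d)$ — a standard fact about completions of metric spaces, applied here with the observation, already made in the excerpt, that $(\omega^\omega,d)$ is the completion of $c_{00}(\omega)$ — and that the extension $\tilde g$ automatically preserves $c_{00}(\omega)$ setwise (since $g$ does and $\tilde g$ extends both $g$ and $g^{-1}$'s extension), hence lies in $\Aut_{\Sub{\CUMet}}(((\omega^\omega,d), c_{00}(\omega)))$ and restricts back to $g$. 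Composing this with the equivalence-induced isomorphism of the previous paragraph yields that $\varphi$ itself is a group isomorphism, given explicitly by $f \mapsto \overline f|_{c_{00}(\omega)}$.

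The main obstacle I anticipate is purely bookkeeping: making sure that the group isomorphism coming out of the category equivalence really is implemented by $f \mapsto \overline f$ on the nose (rather than up to some conjugation), and that the ``$\FL(\mU_S) = c_{00}(\omega)$'' identification is compatible with the metric $d$ as written — i.e. that the subspace metric on $c_{00}(\omega) \subseteq (\omega^\omega, d)$ has range exactly $S = \{0\}\cup\{1/n : n>0\}$ and realizes the Fra\"iss\'e limit of $\mU_S$ in the sense of \cite{NguyenVanThe2009}. Both points are essentially routine given the cited results, so I would state them as immediate consequences of Theorem 7.3 of \cite{Duzi2024} and of the explicit formula for $d$, and spend the written proof almost entirely on the elementary ``isometries extend uniquely to the completion and restrict back'' argument.
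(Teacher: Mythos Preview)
Your proposal is correct and follows essentially the same approach as the paper: the paper's argument (given as discussion immediately preceding the proposition rather than as a formal proof) also invokes Theorem 7.3 of \cite{Duzi2024} to pass to $\Aut_{\Sub{\CUMet}}((\omega^\omega,d), c_{00}(\omega))$, then uses that $(\omega^\omega,d)$ is the completion of $c_{00}(\omega)$ so that isometries extend uniquely and restrict back. Your write-up is more careful about the bookkeeping (well-definedness, injectivity via density, surjectivity via extension), but the underlying strategy is identical.
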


However, the so called KPT-correspondence explores the properties of the  \emph{topological} automorphisms group of the Fra\"iss\'e limit and we have not yet imbued $\Aut_{\Games}(G_{\FL})$ with any topology. 

In view of Proposition \ref{PROP_AUT_MET}, we can induce a topology $\tau_{\varphi}$ over $\Aut_{\Games}(G_{\FL})$ by the group isomorphism $\varphi\colon \Aut_{\Games}(G_{\FL})\to \Aut_{\UMet}(\FL(\mU_s))$, considering that $\Aut_{\UMet}(\FL(\mU_s))$ is taken with the classical topology of pointwise convergence.

Recall that a topological group $G$ is said to be \emph{extremely amenable} if every continuous action $G\curvearrowright X$ over a compact space $X$ has a fixed point. Model-theoretic KPT-correspondence tells us that the automorphisms group of a Fra\"iss\'e limit is extremely amenable if, and only if, its Fra\"iss\'e class is \emph{Ramsey} (see \cite{Kechris2005}). As we are not within the model-theoretic context, though, we need to rely on some generalizations from \cite{Bartos2024}:

\begin{defn}\label{DEF_KPT_Matching}[Definition 2.1 in \cite{Bartos2024}]
    Suppose $\mathbf F$ is a full subcategory of $\bC$ and $\mM$ is a class of $\bC$-morphisms. Let $\mathrm{S}\colon \omega\to \bF_\mM$ be a sequence, and $s= \seq{s_n\colon \mathrm{S}n\to X}$ a cocone. We say that the pair $(\mathrm{S}, s)$ is \emph{matching} if it satisfies the following conditions:
    \begin{itemize}
        \item[(F1)] For every $f\colon A\to X$ in $\mM$ with $A$ in $\bF$, there are an $n<\omega$ an $f'\colon A\to \mathrm{S}n$ such that $f = s_n\circ f'$;
        \item[(F2)] For all $n<\omega$ and $f,f'\colon A\to \mathrm{S}n$ in $\bC$ such that $s_n\circ f = s_n\circ f'$, there is an $m\ge n$ such that $\mathrm{S}^m_n\circ f = \mathrm{S}^m_n\circ f'$;
        \item[(BF)] Every back-and-forth between $\mathrm{S}$ and itself induces an isomorphism $x\colon X\to X$ which commutes with the cocone $\seq{s_n\colon \mathrm{S}n\to X}$ and the back-and-forth;
        \item[(H)] For every $h\in \Aut(X)\setminus\{\id_{X}\}$ there is an $n<\omega$ such that $h\circ s_n\neq s_n$.
    \end{itemize}
\end{defn}

\begin{prop}
    If $\mathrm{F}\colon \omega\to \mathrm{Free}[\mathbf{Fin}\Gmes]_{\emb}$ is a Fra\"iss\'e sequence in $\mathrm{Free}[\mathbf{Fin}\Gmes]_{\emb}$ and $f = \seq{f_n\colon \mathrm{F}n\to G_{\FL}}$ is its colimit cocone in $\Games_\A$, then $(\mathrm{F}, f)$ is matching.
\end{prop}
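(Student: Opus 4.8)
The plan is to check the four conditions (F1), (F2), (BF), (H) of Definition~\ref{DEF_KPT_Matching} one at a time, with $\bC=\Games_\A$, $\bF=\mathrm{Free}[\mathbf{Fin}\Gmes]$, $\mM=\emb$, $X=G_{\FL}$, and the cocone $s$ taken to be $f=\seq{f_n\colon \mathrm{F}n\to G_{\FL}}$. The preliminary fact that makes the whole verification routine is that each leg $f_n$ is itself a game embedding, and in particular a monomorphism in $\Games_\A$: since the connecting morphisms of $\mathrm{F}$ lie in $\emb$, the sequence $\mathrm{F}$ consists of injective maps, so by Lemma~\ref{LEMMA_GameCollectivelyE*} the colimit cocone $\seq{f_n}$ is comprised of injective maps; each $f_n$ is an $\A$-morphism by definition of cocone; and each $f_n$ is a $\B$-morphism vacuously, because $\mathrm{F}n$ lies in $\mathrm{Free}[\mathbf{Fin}\Gmes]$ and hence is trivial for $\ali$, so it has no run won by $\bob$. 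Thus each $f_n$ is a game embedding, and game embeddings are monic in $\Games_\A$.

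With this in hand, (F1) is exactly weak finite smallness. Given $g\colon A\to G_{\FL}$ in $\emb$ with $A$ in $\mathrm{Free}[\mathbf{Fin}\Gmes]$, Theorem~\ref{THM_WeakFinSmallGames} tells us $A$ is weakly finitely small in $\Games_\A$; applying this to the sequence $\mathrm{F}$ (regarded as a sequence in $\Games_\A$), its colimit cocone $\seq{f_n}$, and the $\A$-morphism $g$, we obtain $n<\omega$ and $g'\colon A\to \mathrm{F}n$ in $\Games_\A$ with $f_n\circ g'=g$, as required (one even gets $g'\in\emb$ automatically, being injective and vacuously a $\B$-morphism, though (F1) does not demand this). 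Condition (F2) is immediate from monicity of $f_n$: if $f_n\circ f=f_n\circ f'$ for $f,f'\colon A\to\mathrm{F}n$ in $\Games_\A$, then $f=f'$, so $m=n$ works. Condition (BF) is precisely Lemma~\ref{LEMMA_BackAndForth_Iso} applied with $\mathrm{R}=\mathrm{S}=\mathrm{F}$ and both colimit cocones equal to $\seq{f_n}$ (legitimate since the $f_n$ lie in $\mM$): any back-and-forth between $\mathrm{F}$ and itself induces an isomorphism $x\colon G_{\FL}\to G_{\FL}$ commuting with the cocone and the back-and-forth. Finally, for (H), if $h\in\Aut(G_{\FL})$ satisfies $h\circ f_n=f_n$ for every $n<\omega$, then $h$ is a morphism of cocones from $\seq{f_n}$ to itself, so the universal property of the colimit forces $h=\id_{G_{\FL}}$; contrapositively, every $h\neq\id_{G_{\FL}}$ moves some $f_n$.

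The only step requiring genuine care is the preliminary claim that the legs $f_n$ are game embeddings — it underlies (F2) and is needed to invoke Lemma~\ref{LEMMA_BackAndForth_Iso} in (BF) — and even that reduces to the observation that triviality for $\ali$ makes the $\B$-morphism condition vacuous. Beyond that, the argument is essentially bookkeeping: (F1) is Theorem~\ref{THM_WeakFinSmallGames}, (BF) is Lemma~\ref{LEMMA_BackAndForth_Iso}, and (F2) and (H) are formal consequences of monicity and of the universal property of the colimit. I therefore do not expect a real obstacle here.
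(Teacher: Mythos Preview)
Your proof is correct and follows essentially the same approach as the paper's: (F1) from weak finite smallness, (F2) from monicity of the cocone legs, (BF) from Lemma~\ref{LEMMA_BackAndForth_Iso}, and (H) from the colimit. The only cosmetic differences are that the paper cites Theorem~\ref{THM_WeakFinSmallGames_Emb} rather than Theorem~\ref{THM_WeakFinSmallGames} for (F1) (either works here), and for (H) the paper invokes Lemma~\ref{LEMMA_GameCollectivelyE*} concretely whereas you argue directly from the universal property of the colimit; your explicit preliminary verification that the legs $f_n$ are game embeddings is left implicit in the paper.
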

\begin{proof}
    Condition (F1) follows from Theorem \ref{THM_WeakFinSmallGames_Emb}, while the fact that each $f_n$ is monic implies (F2).

    Lemma \ref{LEMMA_BackAndForth_Iso} implies (BF) and (H) is clearly a consequence of Lemma \ref{LEMMA_GameCollectivelyE*}.
\end{proof}

\begin{defn}[Definition 3.1 in \cite{Bartos2024}]
    Suppose $\mathbf F$ is a full subcategory of $\bC$ and $\mM$ is a class of $\bC$-morphisms. We say that $\bF_\mM$ has the \emph{weak Ramsey property} is for every object $A$ in $\bF$ there is an $a\colon A\to A'$ in $\bF_\mM$ such that for all $k<\omega$, $B$ object in $\bF$ and finite $\mF\subset \mM(A,B)$ there is an object $C$ in $\bF$ such that for every (``\emph{colouring}'') $\varphi\colon \mM(A,C)\to k$ there is a $c\colon B\to C$ such that $\varphi$ is constant (or ``\emph{monochromatic}'') on $c\circ \mF$.
\end{defn}

In what follows, \cite{Bartos2024} considers the topology over $\Aut(X)$ for a matching pair $(\mathrm{S}, s)$ defined with the neighborhood base of the identity given by
\[
\set{\set{g\in \Aut(X): g\circ s_n = s_n} :n<\omega}.
\]

Its is then easy to check that, over $\Aut_{\Games}(G_{\FL})$, such topology produces the aforementioned $\tau_{\varphi}$. This is relevant for us because:

\begin{thm}\label{THM_Ramsey_ExtremeAmenable}[Theorem 3.14 in \cite{Bartos2024}]
    Suppose an object $F$ in $\bC$ is the colimit in $\bC$ of a Fra\"iss\'e sequence $\mathrm{F}\colon \omega\to \bF_\mM$ in $\bF_\mM$ with colimit cocone $f = (f_n)_{n<\omega}$ such that $(\mathrm{F},f)$ is matching. Then the following are equivalent:
    \begin{itemize}
        \item[(a)] $\Aut(F)$ is extremely amenable;
        \item[(b)] $\bF_\mM$ has the weak Ramsey property.
    \end{itemize}
\end{thm}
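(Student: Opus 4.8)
The plan is to transport the Kechris--Pestov--Todorcevic correspondence into the present categorical setting, using the matching pair $(\mathrm{F},f)$ as the dictionary between the combinatorics of $\bF_\mM$ and the dynamics of $\Aut(F)$. On the dynamical side the main tool is Pestov's reformulation of extreme amenability: for a topological group $G$ with a neighborhood base $\{U_n\}_{n<\omega}$ of open subgroups at the identity, $G$ is extremely amenable if and only if for every $k<\omega$, every finite $\mathcal{G}\subseteq G$, and every $\chi\colon G\to k$ that is constant on the cosets $gU_n$ ($g\in G$) for a common $n$, there is a $g\in G$ with $g\mathcal{G}$ $\chi$-monochromatic. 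For $G=\Aut(F)$ with the topology fixed before Theorem \ref{THM_Ramsey_ExtremeAmenable}, the sets $U_n=\{h:h\circ f_n=f_n\}$ form exactly such a base, so the criterion applies verbatim.

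First I would record two structural consequences of matching that set up the translation. Using (F1) together with the Fra\"iss\'e property (Definition \ref{DEF_FraisseSeq}(A)) one builds, from any two $g,g'\colon A\to F$ in $\mM$, a back-and-forth between $\mathrm{F}$ and itself; by (BF) this back-and-forth yields an automorphism of $F$ carrying $g$ to $g'$, so $F$ is ultrahomogeneous with respect to $(\mM,\bF)$. Secondly, (F1) and (F2) together identify each set $\mM(A,F)$ with the colimit of the sets $\mM(A,\mathrm{F}n)$; fixing a base point $a_0\in\mM(A,F)$, ultrahomogeneity makes the orbit map $h\mapsto h\circ a_0$ a surjection of $\Aut(F)$ onto $\mM(A,F)$ whose fibres are the cosets of the open subgroup $\mathrm{Stab}(a_0)$, which contains some $U_n$. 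Thus a colouring $\varphi\colon\mM(A,F)\to k$ corresponds to a $\chi_\varphi\colon\Aut(F)\to k$ constant on the cosets $h\,\mathrm{Stab}(a_0)$, a finite $\mathcal{F}\subseteq\mM(A,F)$ corresponds (via transitivity) to a set $\mathcal{G}\cdot a_0$, and monochromaticity of $\varphi$ on a translate $g\cdot\mathcal{F}$ matches monochromaticity of $\chi_\varphi$ on the left translate $g\mathcal{G}$.

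For $(b)\Rightarrow(a)$, given $A$, a colouring $\varphi\colon\mM(A,F)\to k$ and a finite $\mathcal{F}\subseteq\mM(A,F)$, I would use (F1) to write $\mathcal{F}=f_n\circ\mathcal{F}'$ with $B:=\mathrm{F}n$ in $\bF$ and $\mathcal{F}'\subseteq\mM(A,B)$, apply the weak Ramsey property to obtain a $C$ in $\bF$ absorbing finite colourings of $\mM(A,C)$ on $\mathcal{F}'$-translates, use Definition \ref{DEF_FraisseSeq}(A) to place $C$ inside a later level $\mathrm{F}m$ compatibly with the connecting maps (say via $e\colon C\to\mathrm{F}m$), pull $\varphi$ back to the colouring $h\mapsto\varphi(f_m\circ e\circ h)$ of $\mM(A,C)$, extract a monochromatic $c\colon B\to C$, and finally realise $f_m\circ e\circ c\colon B\to F$ as $x\circ f_n$ for an automorphism $x$ by ultrahomogeneity, so that $\varphi$ is constant on $x\circ\mathcal{F}$; Pestov's criterion then gives extreme amenability. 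For $(a)\Rightarrow(b)$ I would run this in reverse: if the weak Ramsey property fails at some $A$, then for every candidate $a\colon A\to A'$ and every approximating level there is a finite colouring with no monochromatic translate, and one diagonalises these along the sequence $\mathrm{F}$ into a single colouring of $\Aut(F)$ — constant on the cosets of some $U_n$ — that is bad on every finite translate, contradicting extreme amenability; here (H) is used to ensure the $\Aut(F)$-action on the sets $\mM(A,F)$ is faithful, so that a bad colouring upstairs genuinely reflects the combinatorial failure downstairs.

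The step I expect to be the main obstacle is the diagonalisation in $(a)\Rightarrow(b)$: assembling the individual ``bad'' finite colourings into one locally constant, coset-invariant colouring of $\Aut(F)$ that simultaneously defeats all finite translates must be carried out compatibly with the directed system $\mathrm{F}$, and it is precisely to keep this construction finitary that the weak Ramsey property is phrased with the auxiliary object $A'$ and with finite test families $\mathcal{F}$ rather than with all of $\mM(A,B)$ at once. Lining these parameters up with Pestov's criterion — and verifying that conditions (F2), (BF) and (H) are doing exactly the work of, respectively, realising $\mM(A,F)$ as a colimit of finite hom-sets, supplying ultrahomogeneity, and keeping the action faithful — is where the care is needed; the rest is bookkeeping along the colimit cocone.
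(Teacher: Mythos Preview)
The paper does not prove this theorem: it is imported verbatim as Theorem~3.14 of \cite{Bartos2024} and invoked as a black box, so there is no proof in the paper to compare your proposal against.

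As an independent sketch your outline is plausible and follows the shape of the argument in \cite{Bartos2024}: one does translate extreme amenability of $\Aut(F)$ via a Pestov-type finite-oscillation-stability criterion, with (F1)--(F2) identifying $\mM(A,F)$ as the directed colimit of the hom-sets $\mM(A,\mathrm{F}n)$, (BF) supplying the transitivity needed to realise colourings of $\mM(A,F)$ as coset-invariant colourings of $\Aut(F)$, and (H) ensuring the action is faithful. The point you flag as delicate --- assembling the bad finite colourings in the $(a)\Rightarrow(b)$ direction --- is indeed where the work lies, and it is precisely to tame this step that the weak Ramsey property in \cite{Bartos2024} is formulated with the auxiliary morphism $a\colon A\to A'$ and with \emph{finite} test families $\mathcal F\subseteq\mM(A,B)$; your reading of these parameters is correct. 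One small caution: your invocation of ``Pestov's criterion'' assumes that the $U_n$ form a base of open \emph{subgroups}, which holds here by the definition of the topology on $\Aut(F)$ given just before the theorem, but in the generality of \cite{Bartos2024} the hom-sets $\mM(A,\mathrm{F}n)$ need not be finite, so the colouring criterion used there is a slightly more general ``ambient'' Ramsey statement rather than the classical finite-colouring version.
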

    
In this case, as it is a consequence of Theorem 11 in \cite{NguyenVanThe2009} that $\Aut_{\UMet}(\FL(\mU_s))$ is not extremely amenable, we obtain that

\begin{cor}
    The topological group $(\Aut_{\Games}(G_{\FL}),\tau_{\varphi})$ is not extremely amenable. Furthermore, $\mathrm{Free}[\mathbf{Fin}\Gmes]_{\emb}$ does not have the weak Ramsey property.
\end{cor}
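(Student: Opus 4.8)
The strategy is to transport the known failure of extreme amenability for the automorphism group of the Urysohn-type ultrametric space $\FL(\mU_s)$ across the isomorphism of Proposition \ref{PROP_AUT_MET}, and then run the categorical KPT-correspondence of Theorem \ref{THM_Ramsey_ExtremeAmenable} backwards.

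First I would record that extreme amenability is an invariant of topological groups up to homeomorphic group isomorphism: a continuous action of $G$ on a compact space $X$ transports along a topological isomorphism $\psi\colon G\to H$ to a continuous action of $H$ on $X$ with the same set of fixed points, so one group is extremely amenable exactly when the other is. By the very definition of $\tau_{\varphi}$, the map $\varphi$ of Proposition \ref{PROP_AUT_MET} is such an isomorphism between $(\Aut_{\Games}(G_{\FL}),\tau_{\varphi})$ and $\Aut_{\UMet}(\FL(\mU_s))$, the latter carrying the topology of pointwise convergence. Since Theorem 11 in \cite{NguyenVanThe2009} yields that $\Aut_{\UMet}(\FL(\mU_s))$ is not extremely amenable, the group $(\Aut_{\Games}(G_{\FL}),\tau_{\varphi})$ is not extremely amenable either; this is the first assertion.

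For the second assertion I would fix, using Theorem \ref{THM_AFraisseGame}, a Fra\"iss\'e sequence $\mathrm{F}\colon \omega\to \mathrm{Free}[\mathbf{Fin}\Gmes]_{\emb}$ together with its colimit cocone $f=\seq{f_n\colon \mathrm{F}n\to G_{\FL}}$ in $\Games_\A$; the proposition preceding the corollary shows that $(\mathrm{F},f)$ is matching. The topology attached by \cite{Bartos2024} to this matching pair has basic identity neighbourhoods $\set{g:g\circ f_n=f_n}$, and since $g\circ f_n=f_n$ exactly pins $g$ down on the finite subtree $f_n[\mathrm{F}n]$ of $G_{\FL}$, transporting along $\varphi$ identifies these with the basic neighbourhoods of pointwise convergence — i.e.\ this topology is precisely $\tau_{\varphi}$, as already observed in the text. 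Theorem \ref{THM_Ramsey_ExtremeAmenable} then gives that $\Aut(G_{\FL})$ is extremely amenable if and only if $\mathrm{Free}[\mathbf{Fin}\Gmes]_{\emb}$ has the weak Ramsey property; since the former fails, so does the latter.

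The whole argument is bookkeeping once the matching pair and the isometric identification $\Met(G_{\FL})\cong \FL(\mU_s)$ are available; the one genuinely delicate point, which I would flag as the main obstacle, is checking that the matching-pair topology of \cite{Bartos2024} and the transported topology $\tau_{\varphi}$ literally coincide, since Theorem \ref{THM_Ramsey_ExtremeAmenable} is phrased only for the former.
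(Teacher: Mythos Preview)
Your proposal is correct and follows essentially the same line as the paper: transport the failure of extreme amenability from $\Aut_{\UMet}(\FL(\mU_s))$ (via Theorem 11 of \cite{NguyenVanThe2009}) along the isomorphism $\varphi$ defining $\tau_{\varphi}$, then invoke the matching pair and Theorem \ref{THM_Ramsey_ExtremeAmenable} to deduce the failure of the weak Ramsey property. The paper's argument is terser but relies on precisely the same ingredients, including the observation (made just before Theorem \ref{THM_Ramsey_ExtremeAmenable}) that the matching-pair topology coincides with $\tau_{\varphi}$, which you rightly flag as the one point needing care.
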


Note that, in order to rely on Theorem \ref{THM_Ramsey_ExtremeAmenable}, we crucially relied on the weak finite smallness of the games in $\mathrm{Free}[\mathbf{Fin}\Gmes]$. The topology $\tau_{\mathrm{p}}$ over $\Aut_{\Games}(G_{\FL})$ induced by the pointwise convergence topology of $\Aut_{\UMet}(\omega^\omega)$ is yet another possibility which seems to be better suited to explore the combinatorics of $\mathbf{FinGame}_{\emb}$ (as its basic open sets can also fix runs which are won by $\bob$, whereas $\tau_\phi$'s basic open sets only restricts runs which are won by $\ali$). However, since (F1) of Definition \ref{DEF_KPT_Matching} fails for the Fra\"iss\'e sequence in $\FinGame_{\emb}$ and its colimit cocone in $\Games_\A$, we cannot rely on the work done in \cite{Bartos2024} to relate the combinatorics of $\FinGame_{\emb}$ to the topological dynamics of $\Aut_{\Games}(G_{\FL})$. The following thus remains open:

\begin{que}\label{QUE_FinGame_Ramsey}
    Does $\FinGame_{\emb}$ have the weak Ramsey property? And does the answer to this question bear any implications to the topological dynamics of $(\Aut_{\Games}(G_{\FL}), \tau_{\mathrm{p}})$?
\end{que}

We expect that further generalizations of the work done in \cite{Bartos2024} which do not rely on (F1) are needed to answer Question \ref{QUE_FinGame_Ramsey}.

\bibliographystyle{plainurl}

\end{document}